\newtheorem{theorem}{Theorem}
\newtheorem{lemma}[theorem]{ {Lemma}}       	
\newtheorem{corollary}[theorem]{ {Corollary}}
\newtheorem{definition}[theorem]{ {Definition}}
\newtheorem{remark}[theorem]{ {Remark}}
\newtheorem{assumption}[theorem]{ {Assumption}}
\renewcommand\limsup{\operatorname*{lim \, sup}}
\renewcommand\liminf{\operatorname*{lim \, inf}}
\newcommand{\behcet}{Beh\c{c}et}
\newcommand{\acikmese}{A\c{c}\i kme\c{s}e}
\newcommand{\MR}{\mathbb{R}}
\newcommand{\Rfunc}[3]{{#1} \colon \mathbb{R}^{#2} \to \mathbb{R}^{#3}}
\newcommand{\Tfunc}[2]{{#1} \colon [0, t_f] \to \mathbb{R}^{#2}}
\newcommand{\intset}[2]{\left[#1,#2\right]_{\mathbb{N}}}
\newcommand{\blue}[1]{{#1}}
\apptocmd{\normalsize}{\setlength{\abovedisplayskip}{5pt plus 2pt minus 0pt}}{}{}
\apptocmd{\normalsize}{\setlength{\belowdisplayskip}{1pt plus 1pt minus 0pt}}{}{}
\apptocmd{\normalsize}{\setlength{\abovedisplayshortskip}{0pt plus 0pt}}{}{}
\apptocmd{\normalsize}{\setlength{\belowdisplayshortskip}{3pt plus 2pt minus 0pt}}{}{}
\acrodef{LCvx}{Lossless Convexification}
\begin{document}
\begin{frontmatter}
\title{Revisiting Lossless Convexification: Theoretical Guarantees for Discrete-time Optimal Control Problems }
\thanks[footnoteinfo]{Corresponding author Dayou Luo. }

\author[Dayou]{Dayou Luo },    
\author[AA]{Kazuya Echigo},               
~and~
\author[AA]{\text{{\behcet~\acikmese}}} 

\address[Dayou]{Department of Applied Mathematics, University of Washington, Seattle, WA, 98195}  
\address[AA]{William E. Boeing Department of Aeronautics and Astronautics, University of Washington, Seattle, WA, 98195}                            

\maketitle
\begin{abstract}

\ac{LCvx} is a modeling approach that transforms a class of nonconvex optimal control problems, where nonconvexity primarily arises from control constraints, into convex problems through convex relaxations. These convex problems can be solved using polynomial-time numerical methods after discretization, which converts the original infinite-dimensional problem into a finite-dimensional one. However, existing \ac{LCvx} theory is limited to continuous-time optimal control problems, as the equivalence between the relaxed convex problem and the original nonconvex problem holds only in continuous-time. This paper extends \ac{LCvx} theory to discrete-time optimal control problems by classifying them into normal and long-horizon cases. For normal cases, after an arbitrarily small perturbation to the system dynamics (recursive equality constraints), applying the existing \ac{LCvx} method to discrete-time problems results in optimal controls that meet the original nonconvex constraints at all but no more than \(n_x - 1\) temporal grid points, where \(n_x\) is the state dimension. For long-horizon cases, the existing \ac{LCvx} method fails, but we resolve this issue by integrating it with a bisection search, leveraging the continuity of the value function from the relaxed convex problem to achieve similar results as in normal cases. This paper strengthens the theoretical foundation of \ac{LCvx}, extending the applicability of \ac{LCvx} theory to discrete-time optimal control problems.

\end{abstract}
\end{frontmatter}

\section{Introduction}



\acresetall

\ac{LCvx} is a technique designed to efficiently solve a class of nonconvex continuous-time optimal control problems, where nonconvexity primarily arises from control constraints. \blue{The central idea of LCvx is to introduce slack variables to relax these nonconvex constraints, thereby converting the nonconvex feasible set into a convex one.} Importantly, under mild conditions, Pontryagin's maximum principle guarantees that the solution of this relaxed convex formulation remains feasible and optimal for the original nonconvex problem. \blue{As this relaxation introduces no approximation error or loss in solution quality, the technique is termed ``lossless"~\citep{acikmese2005powered}.} Once reformulated, the continuous time problem can be discretized into a finite-dimensional convex optimization problem and efficiently solved using standard polynomial-time interior solvers \citet{nemirovski2004interior}.

\ac{LCvx} research has been active for the past decade, and it has proven useful in numerous applications ~\citep{malyuta2022convex}. The initial application of the \ac{LCvx} method targeted Mars pinpoint landings~\citep{acikmese2005powered, acikmese2007convex}, where the nonzero lower bound on the magnitude of the control input vector led to nonconvex control constraints. Later, the \ac{LCvx} family has been generalized to handle a wide range of continuous-time optimal control problems, including but not limited to those with generalized nonconvex input constraints~\citep{accikmecse2011lossless, yang2023convex}, input pointing constraints~\citep{carson2011lossless}, integer constraints~\citep{Harris2021, woodford2021geometric,Kazu2023, echigo2024expected, malyuta2020lossless}, active fault diagnosis\citep{guo2024active}, as well as problems with additional convex affine and quadratic state constraints~\citep{harris2013lossless, harris2014lossless}.
 Moreover, the efficacy of the \ac{LCvx} method has also been tested in real-world experiments~\citep{acikmese2012g, acikmese2013flight} and is now being used as a baseline for Moon landing applications\citep{fritz2022post, shaffer2024implementation}. 


Despite the success of \ac{LCvx} in real-world applications, \blue{its theoretical results do not fully support its practical use}. While \ac{LCvx} theory establishes an equivalence between a continuous-time nonconvex optimal control problem and its convex relaxation, both remain continuous-time problems and cannot be directly solved by numerical solvers, which handle only finite-dimensional problems. To address this, discretization is usually applied to transform both problems into finite-dimensional, discrete-time versions~\citep{accikmecse2011lossless}. However, this discretization step introduces additional theoretical challenges, as continuous-time \ac{LCvx} theory does not guarantee that the optimal solution of the discretized convex relaxation satisfies the constraints of the discretized nonconvex problem, \blue{and thus it does not guarantee the equivalence between these two discrete problems}.

In fact, the optimal solution of the discrete-time convex relaxation can violate the constraints of the original discrete-time nonconvex problem, \blue{a phenomenon previously observed in LCvx research \citep{acikmese2007convex, malyuta2020lossless}.} In this paper, we provide numerical examples showing that the solution to the relaxed convex problem may fail to satisfy the original nonconvex constraints at one, multiple, or even all temporal grid points. \blue{These examples highlight potential challenges in directly extending lossless convexification from continuous-time to discrete-time settings.}

\blue{Motivated by these observations, the primary goal of this paper is to develop a rigorous theoretical framework for directly applying lossless convexification to discrete-time nonconvex optimal control problems. Specifically, we provide formal reasoning to explain why constraint violations occur in discrete-time settings and establish theoretical bounds on the number of temporal grid points at which these violations can appear.
}



Instead of considering a general class of nonconvex control constraints, we focus on a simple problem where the lower bound of the nonzero norm on the control input magnitude is the sole source of nonconvexity. Here, the input magnitude is defined by a convex function of the input variable. We chose this problem because it can demonstrate the essence of \ac{LCvx} method without introducing excessive mathematical complexity. The application of the \ac{LCvx} method to a broader class of nonconvex problems will be explored in future studies.

In our theoretical framework for the discrete-time nonconvex problem, we classify such problems into two categories: normal cases and long-horizon cases. Long-horizon cases arise when the control horizon is sufficiently long to admit an optimal solution for the convexified problem, but the input magnitude of this solution consistently violates the nonconvex input constraints throughout the trajectory. All other problems are classified as normal cases. \blue{The existing \ac{LCvx} method has demonstrated effective performance for normal cases~\citep{acikmese2005powered, acikmese2007convex}, but it fails for long-horizon cases because a key assumption for LCvx theory—the strict transversality assumption (\citep[Condition 2]{malyuta2022convex}, \citep[Condition 2]{accikmecse2011lossless})—is no longer valid.}



For the normal case, we provide a theoretical guarantee for applying the existing \ac{LCvx} method to discrete-time optimal control problems by proving a bound on the number of points where the optimal solution to the convexified problem can violate the original nonconvex constraints. Specifically, we show that, after an arbitrarily small perturbation to the dynamics, the optimal solution to the convexified problem violates nonconvex constraints at no more than \( n_x - 1 \) temporal grid points, where \( n_x \) is the state dimension. Furthermore, we show that the perturbed problem maintains all necessary assumptions from the relaxed convex problem and we establish bounds on the impact of the perturbation. Numerical examples are also provided to demonstrate the necessity of the perturbation and the tightness of the upper bound. Our proof technique is closely related to the discretized Pontryagin's Maximum Principle~\citep{Sethi2021Optimal, canon1970theory}, as both approaches are based on the Lagrangian function for finite-dimensional optimal control problems. However, unlike the discretized Pontryagin's Maximum Principle, our method does not involve the use of the Hamiltonian.


In contrast, the existing \ac{LCvx} method does not work for long-horizon cases \blue{because the strict transversality assumptions (\citep[Condition 2]{malyuta2022convex}, \citep[Condition 2]{accikmecse2011lossless}) required by existing \ac{LCvx} methods no longer hold for such cases}. To address this, we propose a \blue{modification of} the \ac{LCvx} method by combining it with a bisection search. Specifically, we divide the control horizon into two phases. In the first phase, we use an arbitrary feasible control input to minimize cost. The final state of the first phase is then used as the initial state for the second phase, where we apply the existing \ac{LCvx} method to the resulting nonconvex optimal control problem. We expect the control horizon for the second phase to be short enough to satisfy the transversality assumption and fall into the normal case category, while still being long enough to maintain minimum input cost. To achieve this balance, we develop a bisection search to determine the optimal transition time between phases, leveraging the continuity of the value function of the resulting convex optimal control problem. By employing a bisection search, the second-phase control problem falls into the normal case category, enabling us to directly apply the theoretical results of normal cases to long-horizon problems.

\blue{We note that other methods also address cases where the strict transversality assumptions no longer hold. For instance, \citep{kunhippurayil2021lossless} applies a bisection search in continuous-time settings. Our work differs from \citep{kunhippurayil2021lossless} by focusing on discrete-time problems, incorporating terminal costs into the objective rather than fixed terminal states, and performing the bisection search on a different, more practical discrete-time objective. Another long-horizon approach, presented in \citep{yang2024exact}, introduces auxiliary state variables to indirectly address the issue. In contrast, we explicitly analyze why LCvx fails in long-horizon scenarios and propose a direct solution requiring fewer assumptions. This solution transforms the long-horizon problem into a normal one, allowing us to directly apply theoretical results developed for normal cases and facilitating extensions to other discrete-time LCvx methods, such as \citep{malyuta2020lossless}.
}

\textbf{Statement of Contributions:} In this paper, we develop a new \ac{LCvx} \blue{theoretical framework} that directly applied to discrete-time nonconvex optimal control problems, bridging the gap between \ac{LCvx} theory and its real-world applications. Specifically, our contributions are as follows.

First, we develop a new theoretical framework for discrete-time optimal control problems, which differs from previous \ac{LCvx} research and other asymptotic theories for discretizations of continuous-time optimal control~\citep{ober2011discrete,lee2022convexifying}. By focusing on discrete-time settings, we can provide a tight upper bound for the number of temporal grid points at which the optimal control violates the nonconvex constraints after applying the \ac{LCvx} method.

Second, our theoretical framework employs linear algebra and finite-dimensional convex analysis, thus avoiding the need for infinite-dimensional mathematical tools such as Pontryagin's maximum principle. This approach not only simplifies the theoretical development but also offers a fresh perspective on the mechanism of \ac{LCvx}.

Third, our theoretical framework eliminates the requirement for the strict transversality assumption found in earlier \ac{LCvx} theories (\citep[Condition 2]{malyuta2022convex} and~\citep[Condition 2]{accikmecse2011lossless}), an assumption that is often difficult to verify before computation. This removal makes our LCvx theory more applicable to real-world applications.

\textbf{Road map:} In Section \ref{sec: background}, we review the background of \ac{LCvx} research and define the optimization problem under consideration. Next, in Section \ref{sec: Mechanism of LCvx}, we provide an intuitive explanation of the mechanism of \ac{LCvx} using convex analysis techniques and define the mathematical criteria for normal cases and long-horizon cases. In Section \ref{sec: normal cases}, we discuss the validity of \ac{LCvx} for normal cases, define perturbations to the discrete dynamics, and provide theoretical guarantees for the \ac{LCvx} method on the perturbed nonconvex optimal control problem. In Section \ref{sec: long-horizon}, we address the long-horizon case and propose a bisection search algorithm for these cases. Finally, in Section \ref{sec:numerical}, we present numerical experiments to demonstrate counterexamples where the \ac{LCvx} method is invalid and showcase the perturbation and bisection search algorithms.

\subsection{Notation}
In the remainder of this discussion, we adopt the following notation. The set of integers $\{a, a+1, \ldots, b-1, b\}$ is denoted by $[a,b]_{\mathbb{N}}$. The set of real numbers is denoted by $\mathbb{R}$, the set of real $n \times m$ matrices by $\mathbb{R}^{n \times m}$, and the set of real $n$-dimensional vectors by $\mathbb{R}^n$. Vector inequalities are defined elementwise. For matrices $A_1, A_2, \ldots, A_n$, where each matrix $A_i$ has the same number of rows, we denote the horizontal concatenation of these matrices by $(A_1, A_2, \ldots, A_n).$ This operation results in a matrix formed by placing matrices $A_1$ through $A_n$ side by side, from left to right. The Euclidean norm for vectors and matrices is denoted by $\|\cdot \|$, and the absolute value by $|\cdot|$. For a matrix $A$, $\operatorname{exp}(A)$ denotes the matrix exponential of $A$, with the definition available in~\citep{chen1984linear} and $\operatorname{rank}(A)$ denotes the rank of matrix $A$.  For a convex set $\mathbb{D}$, the normal cone at the point $z$ is denoted by $N_{\mathbb{D}}(z)$, with the definition found in~\citep[Chapter 2.1]{borwein2006convex}. For a function $f(x,y)$ convex in $(x,y)$, $\partial_x f(x,y)$ and $\partial_x f(x,y)$ represent the subgradient of $f$ with respect to the variables $x$ and $y$ separately, as defined in~\citep{boyd2004convex}. For a differentiable function $f(x,y)$, $\nabla_x f(x,y)$ and $\nabla_y f(x,y)$ denote the partial derivative of $f$ with respect to $x$ and $y$ separately. Finally, $\operatorname{Prob}(E)$ denotes the probability that certain events $E$ occur.

\section{Background}
\label{sec: background}
In this section, we discuss the background of the \ac{LCvx} method. Consider the optimization problem mentioned in~\citep[Problem 6]{malyuta2022convex}, which includes no state constraints other than boundary conditions. We further simplify this problem to a time-invariant linear system without external disturbances. We believe that this basic form captures the essential ideas of the \ac{LCvx} method without adding mathematical complexity. This leads to the following optimal control problem:
\newline
\textbf{Problem 1}:
\label{equ:unslacked_continuous}
\begin{equation}
\begin{aligned}
 \min _{t_f, u, x}\;& m\left(x\left(t_f\right)\right)+\int_0^{t_f} l_c(g(u(t))) \, dt, \\
 \operatorname{s.t.} \quad & \dot{x}(t)=A_c x(t)+B_c u(t),\\
& \rho_{\min} \leq g(u(t))\leq \rho_{\max}, \text{ for } t \in [0, t_f]\\
& x(0)=x_{\text{init}}, \quad G(x(t_f)) = 0.
\end{aligned}
\end{equation}
Here, $\Rfunc{m}{n_x}{},\ \Rfunc{l_c}{}{}$, and $\Rfunc{g}{n_u}{}$ are assumed to be convex $C^1$ smooth functions. $\Rfunc{G}{n_x}{n_G}$ is an affine map. The state trajectory $\Tfunc{x}{n_x}$ and the control input trajectory $\Tfunc{u}{n_u}$ satisfy a continuous ordinary differential equation constraint. The condition $G(x(t_f)) = 0$ represents the boundary condition for the state trajectory $x$. $A_c \in \MR^{n_x \times n_x}$ and $B_c \in \MR^{n_x \times n_u}$ are constant matrices. $t_f \in \MR$ is a free flight time and $x_{\text{init}} \in \MR^{n_x}$ is a fixed initial state. $\rho_{min} \in \MR$ and $\rho_{max} \in \MR$ are fixed constants. The nonconvexity of Problem~\hyperref[equ:unslacked_continuous]{1} arises solely from potentially nonconvex constraints $\rho_{\min} \leq g(u(t))$, where $g$ characterize the magnitude of the input variable. A classic realization of
 $g$ is to use the norm function. Such constraints arise from planetary soft landing applications, where the net thrust of rocket engines must exceed a positive lower bound to prevent shutdown~\citep{acikmese2005powered}.

The \ac{LCvx} technique introduced in~\citep{accikmecse2011lossless} considers the following convex relaxation of the Problem~\hyperref[equ:unslacked_continuous]{1}:
\newline
\textbf{Problem 2}:
\label{equ:slackened}
\begin{equation}
\begin{array}{ll}
\min _{t_f, \sigma, u, x} & m\left(x\left(t_f\right)\right)+\int_0^{t_f} l_c(\sigma(t)) d t \\
\text { s.t. } & \dot{x}(t)=A_c x(t)+B_c u(t) \\
&\blue{ \rho_{\min } \leq \sigma(t) \leq \rho_{\min },\ g(u(t)) \leq \sigma(t)} \\
& x(0)=x_{\text {init }}, \quad G\left(x\left(t_f\right)\right)=0
\end{array}
\end{equation}
where $\Tfunc{\sigma}{}$ is a slack variable. \blue{The introduction of the slack variable \(\sigma\) makes the feasible set convex. Under mild conditions, the dual variables associated with the constraint \(g(u(t))\le\sigma(t)\) are positive almost everywhere, enforcing the equality \(g(u(t))=\sigma(t)\) for almost every \(t\), as formally established in Theorem~\citep[Theorem~2]{accikmecse2011lossless}. Consequently, the solution to the convexified problem satisfies the original nonconvex constraint \(\rho_{\min}\le g(u(t))\le\rho_{\max}\) for almost every \(t\).}


Now, we consider the discretized, finite-dimensional,  versions of Problem~\hyperref[equ:unslacked_continuous]{1} and Problem~\hyperref[equ:slackened]{2}. We apply a temporal discretization with zero-order hold for both infinite-dimensional problems to solve them numerically~\citep{acikmese2005powered, acikmese2007convex, accikmecse2011lossless}. Meanwhile, having a free final time $t_f$ can undermine the convexity of the resulting optimization problem. To address this, a commonly used method is to apply a line search for $t_f$. By solving the discretized relaxed problem for each $t_f$, one can find a $t_f$ with the smallest objective value~\citep{acikmese2007convex}. Hence, in the rest of this paper, we only consider the cases where $t_f$ is fixed.

With the following solution  variables:
\begin{equation}
\begin{aligned}
x = & \{x_1, x_2, \cdots, x_{N+1}\}, \  x_i \in \mathbb{R}^{n_x} \text{ for } i \in \intset 1 {N+1}\\
u = & \{u_1, u_2, \cdots, u_N\},\  u_i \  \in \mathbb{R}^{n_u} \text{ for } i \in \intset 1 N\\
\sigma = & \{\sigma_1, \sigma_2, \cdots, \sigma_{N}\}, \  \sigma_i \in \mathbb{R} \text{ for } i \in \intset 1 N.
\end{aligned}
\end{equation}
The  discretized version  of Problem~\hyperref[equ:unslacked_continuous]{1} is:
\newline
\textbf{Problem 3}:
\label{equ:original_LCvx}
\begin{equation}
\begin{array}{ll}
\underset{x,u}{\min}& m\left(x_{N+1}\right) + \sum_{i = 1}^N l_i\left(g(u_i)\right) \\
 \operatorname{s.t.} \quad & x_{i+1} = A_d x_{i} + B_d u_i,\quad i \in \intset 1 N \\
& \rho_{\min} \leq g(u_i) \leq \rho_{\max},\quad i \in \intset 1 N\\
& x_1 = x_{\text{init}}, \quad G\left(x_{N+1}\right) = 0.
\end{array}  
\end{equation}
The sum of $\Rfunc{l_i}{}{}$ is an approximation for the integral term in the objective function of Problem~\hyperref[equ:unslacked_continuous]{1}. Matrices $A_d\in \MR^{n_x \times n_x}$ and $B_d\in \MR^{n_x \times n_u}$ for this discretized problem are given as~\citep{chen1984linear}:
\begin{equation}
\label{equ: dicrete dyanmics matrix}
A_d=\exp \left(A_c \frac{t_f}{N}\right), \quad B_d=\int_0^{t_f / N} \exp \left(A_c \tau\right) d \tau B_c
\end{equation}
Since we mainly work with the discretized problem, we simplify the notation by using $A$ for $A_d$ and $B$ for $B_d$. Condition $G(x_{N+1}) = 0$ is denoted as the boundary condition, $x_{i+1} = A x_{i} + B u_i$ as the dynamics constraints, and $m(x_{N+1}) +\Sigma_i l_i(\sigma_i)$ as the object function. 

The discretized version of  Problem~\hyperref[equ:slackened]{2} is:
\newline
\textbf{Problem 4}:
\label{equ:zero_order_prototype}
\begin{equation}
\begin{array}{ll}
\underset{x,u,\sigma}{\min} & m\left(x_{N+1}\right) + \sum_{i = 1} ^Nl_i\left(\sigma_i\right) \\
 \operatorname{s.t.} \quad & x_{i+1} = A x_{i} + B u_i,\quad i \in \intset 1 N \\
& \rho_{\min} \leq \sigma_i\leq \rho_{\max},\ g(u_i) \leq \sigma_i,\ i \in \intset 1 N \\
& x_1 = x_{\text{init}}, \quad G\left(x_{N+1}\right) = 0. 
\end{array}    
\end{equation}
Problem~\hyperref[equ:zero_order_prototype]{4} can be understood as the result of applying the \ac{LCvx} method to Problem~\hyperref[equ:original_LCvx]{3}. Since Problem~\hyperref[equ:zero_order_prototype]{4} is a relaxation of Problem~\hyperref[equ:original_LCvx]{3}, its optimal value serves as a lower bound for that of Problem~\hyperref[equ:original_LCvx]{3}.

Motivated by previous \ac{LCvx} research on continuous problems, a natural question arises: does the optimal solution to Problem~\hyperref[equ:zero_order_prototype]{4} satisfy the constraints of Problem~\hyperref[equ:original_LCvx]{3}? However, this relationship does not hold in general. The numerical counterexamples in Section \ref{sec:numerical} show that the solution to Problem~\hyperref[equ:zero_order_prototype]{4} often does not satisfy the constraints of Problem~\hyperref[equ:original_LCvx]{3} at certain temporal grid points. Therefore, an optimal solution to Problem~\hyperref[equ:zero_order_prototype]{4} is not guaranteed to solve Problem~\hyperref[equ:original_LCvx]{3}. As a result, we shifted our focus to a revised question. At how many temporal grid points can the solution to Problem~\hyperref[equ:zero_order_prototype]{4} potentially violate the constraints of Problem~\hyperref[equ:original_LCvx]{3}, thereby invalidating \ac{LCvx}?
Here, we define the validity of \ac{LCvx} at grid point $i$ as:
\begin{definition}
The \ac{LCvx} method is valid at a grid point \( i \) if the control input $u_i$ satisfies \(\rho_{\min}\leq g(u_i) \leq \rho_{\text{max}}. \) 
\end{definition} 

We will later prove that, with reasonable assumptions on Problem~\hyperref[equ:zero_order_prototype]{4}  and with arbitrarily small random perturbations to the discrete dynamics, \ac{LCvx} is invalid at most at \(n_x - 1\) temporal grid points, where \(n_x\) denotes the dimension of the variable \(x\).

Before the theoretical discussion, we provide some definitions and assumptions for the problem~\hyperref[equ:zero_order_prototype]{4} required for our discussion.

\begin{assumption}
\label{assumption: uicompactness}
$l_i(\cdot)$ is $C^1$ smooth and monotonically increasing on $[\rho_{min}, \infty)$. The function $m$ is also $C^1$ smooth. The set $\{u \mid g(u_i) \leq \rho_{min} \text{ for all } i\}$ is nonempty, and the set $\{u \mid g(u_i) \leq \rho_{max} \text{ for all } i\}$ is compact.  
\end{assumption}



We also require that the dynamical system in Problem~\hyperref[equ:zero_order_prototype]{4} is controllable. Note that the controllability of the discretized system is closely related to that of the continuous system~\citep[Theorem 6.9]{chen1984linear}.  

\begin{assumption}[Controllability]
\label{assumption:controllability}
For matrices \( A \) and \( B \) in Problem~\hyperref[equ:zero_order_prototype]{4},
\begin{equation}
\operatorname{rank}\left(\begin{array}{cccc}
B & AB & \cdots & A^{n_x-1}B
\end{array}\right) = n_x,
\end{equation}
i.e., the matrix pair \( \{A,B\} \) is controllable. 
\end{assumption}

We now introduce some notation for further assumptions. Since the constraints on $(u, \sigma)$ in Problem~\hyperref[equ:zero_order_prototype]{4} are identical at each temporal grid point, we define  
\begin{equation}
 V :=\{(\bar u, \bar \sigma) \in \mathbb{R}^{n_u}\times \mathbb{R} \,|\, \rho_{\min} \leq \bar \sigma \leq  \rho_{\max}, \, g(\bar u) \leq \bar \sigma\},
\end{equation}
so that the constraint in Problem~\hyperref[equ:zero_order_prototype]{4} can be expressed as $(u_i, \sigma_i) \in V$ for all $i$. Since $g$ is convex, the set $V$ is a convex set.

Moreover, we define $V(\cdot)$ as $
V(\bar \sigma) := \{\bar u \in \mathbb{R}^{n_u} \,| g(\bar u) \leq \bar \sigma \},$
i.e. $V(\bar \sigma)$ is a slice of $V$ at a specific value of $\bar \sigma \in \mathbb R$. The definition of \( V(\cdot) \) establishes a sufficient condition for the validity of \ac{LCvx} for certain \( u_i \). The following lemma follows immediately.

\begin{lemma}
\label{lemma: LCvx v(sigma)}
Under Assumption \ref{assumption: uicompactness}, if a control \( \bar u \) lies on the boundary of \( V(\bar \sigma) \) , \( \bar u \) satisfies the constraints in Problem~\hyperref[equ:original_LCvx]{3}, i.e. $ \rho_{min} \leq g(\bar u)= \bar \sigma \leq \rho_{max}$.
\end{lemma}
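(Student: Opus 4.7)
The plan is to argue directly from the definitions, using only continuity of $g$ and basic topology of sublevel sets. The lemma is implicitly operating in the regime $\bar\sigma\in[\rho_{\min},\rho_{\max}]$ inherited from the constraint $(\bar u,\bar\sigma)\in V$ in Problem~\hyperref[equ:zero_order_prototype]{4}, so the only nontrivial content is the equality $g(\bar u)=\bar\sigma$. Once this is established, the stated inequality chain $\rho_{\min}\le g(\bar u)=\bar\sigma\le\rho_{\max}$ is immediate, which in turn is exactly the nonconvex constraint of Problem~\hyperref[equ:original_LCvx]{3} at the corresponding grid point.

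First I would record two elementary facts from Assumption~\ref{assumption: uicompactness}: since $g$ is $C^1$ smooth it is continuous, and therefore the sublevel set $V(\bar\sigma)=\{\bar u:g(\bar u)\le\bar\sigma\}$ is a closed subset of $\mathbb{R}^{n_u}$. Consequently a boundary point $\bar u\in\partial V(\bar\sigma)$ necessarily lies in $V(\bar\sigma)$ itself, so $g(\bar u)\le\bar\sigma$ holds automatically. The only thing to rule out is the strict inequality $g(\bar u)<\bar\sigma$.

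The core step is a short contradiction argument. Suppose $g(\bar u)<\bar\sigma$. By continuity of $g$ at $\bar u$, there exists $\varepsilon>0$ such that $g(u)<\bar\sigma$ for every $u$ in the open ball $B(\bar u,\varepsilon)$. Hence $B(\bar u,\varepsilon)\subseteq V(\bar\sigma)$, placing $\bar u$ in the interior of $V(\bar\sigma)$ and contradicting $\bar u\in\partial V(\bar\sigma)$. Therefore $g(\bar u)=\bar\sigma$.

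Finally I would chain this equality with the range constraint on the slack variable: because $(\bar u,\bar\sigma)$ is taken as a point of the feasible set of Problem~\hyperref[equ:zero_order_prototype]{4}, one has $\rho_{\min}\le\bar\sigma\le\rho_{\max}$, and substituting $g(\bar u)$ for $\bar\sigma$ yields the desired conclusion. I do not anticipate a serious obstacle here; the argument is purely topological and uses convexity of $g$ only indirectly (via its smoothness/continuity). The one subtlety worth mentioning explicitly in the write-up is the implicit assumption $\bar\sigma\in[\rho_{\min},\rho_{\max}]$, which should be stated at the outset of the proof so the inequality half of the conclusion is not left hanging.
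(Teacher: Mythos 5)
Your proof is correct and matches what the paper intends: the paper states this lemma without proof (``The following lemma follows immediately''), and your continuity-plus-closedness argument showing that a boundary point of the sublevel set $V(\bar\sigma)$ must satisfy $g(\bar u)=\bar\sigma$, combined with the implicit constraint $\rho_{\min}\le\bar\sigma\le\rho_{\max}$ from $(\bar u,\bar\sigma)\in V$, is exactly the standard argument being left to the reader. Your explicit flagging of the implicit assumption on $\bar\sigma$ is a reasonable clarification rather than a deviation.
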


To define other assumptions, we need to further simplify the notation for Problem~\hyperref[equ:zero_order_prototype]{4}. Let \( z := (x, u, \sigma)\in \MR^{n_z} \) with $n_z = (N+1) n_x + N n_u + N$. We denote the object function as \( \Rfunc{\Phi}{n_z}{}\) and encapsulate the dynamics and boundary conditions into an affine constraint \( H(z) = 0 \), where $\Rfunc{H}{n_z}{n_H}$ and $n_H$ represents the total number of equality constraints. The remaining inequality constraints are written within the convex set \( \tilde{V} = \{(x,u,\sigma) \in \MR^{n_z}\,|\, (u_i, \sigma_i) \in V \text{  for all } i \} \). Thus, Problem~\hyperref[equ:zero_order_prototype]{4} can be represented as:
\newline
\textbf{Problem 5}:
\label{equ:simplified_zero_order_hold}
\begin{equation}
\begin{aligned}
 \min_z \quad& \Phi\left(z\right) \\
 \operatorname{s.t.} \quad & H\left(z\right) = 0, \quad z \in \tilde{V}.
\end{aligned}
\end{equation}
Since \( H \) is an affine function, the Jacobian of \( H \), denoted as \(\nabla H(z)\), is a constant matrix. Therefore, by eliminating the equations in \( H \) if necessary, \(\nabla H(z)\) becomes a matrix of full rank. Since the dynamics equalities are designed to be full rank, any elimination of redundant constraints occurs only at the boundary condition and does not affect the dynamics equalities. After this elimination, we make the following assumption:

\begin{assumption}
\label{assumption: dynamics full rank}
For Problem~\hyperref[equ:simplified_zero_order_hold]{5}, \(\nabla H(z)\) has full row rank.    
\end{assumption}

For the convex Problem~\hyperref[equ:simplified_zero_order_hold]{5}, we further require the Slater condition:

\begin{assumption}[Slater's condition]
\label{assumption: slater}
For Problem~\hyperref[equ:simplified_zero_order_hold]{5}, there exists a vector $z$ in the interior of $\tilde V$ such that $H(z) = 0$ 
\end{assumption}

Assumption \ref{assumption: slater} is relatively modest. When $g$ is the Euclidean norm function, Assumption \ref{assumption: slater} is equivalent to having a control sequence \(\|u_i\| < \rho_{max}\) such that the trajectory generated by this \( u_i \) satisfies \( G(x_{N+1}) = 0 \). This holds when the optimal control problem is not overly constrained.

In the following discussion of this paper, Assumption \ref{assumption: uicompactness}, \ref{assumption:controllability},  \ref{assumption: dynamics full rank},  and \ref{assumption: slater} are valid for Problem~\hyperref[equ:zero_order_prototype]{4} without further mentioning.

We conclude the background section with a continuity theorem, which is a major tool in our subsequent theoretical discussion. We begin by introducing a lemma which is a direct consequence of ~\citep[Lemma 3.3.3, Theorem 3.3.4]{clarke2008nonsmooth}:

\begin{lemma}
\label{thm:clarkeexistance}
Consider an optimization problem 
\newline
\textbf{Problem 6}:
\label{equ: general pertube}
\begin{equation}
\begin{aligned}
 \min_y \quad & M\left(y, p\right) \\
 \operatorname{s.t.} \quad & F\left(y, p\right) = 0, \quad y \in \Omega,
\end{aligned}
\end{equation}
where $y \in \MR^{n_y}$, $p \in \MR^{n_p}$. $\Rfunc{M}{n_y + n_p}{}$ and $\Rfunc{F}{n_y + n_p}{n_F}$ are continuous with respect to $(y,p)$, and $\nabla_y F$ exists and is continuous with respect to $(y,p)$.  
Consider a solution $\left(y^*, p_0\right)$ to the equality/constraint system
\begin{equation*}
F(y^*, p_0) = 0, \quad y^* \in \Omega,
\end{equation*}
such that  $\eta = 0 $ is the unique solution to the following equation 
\begin{equation}
0 \in \nabla_y F\left(y^*, p_0\right)^\top \eta + N_\Omega\left(y^*\right).
\end{equation}
Then, there exist $\delta, \xi > 0$ such that for any $p \in \MR^{n_p}$ with $\|p-p_0\| < \xi$, there is a point $y \in \Omega$ satisfying $F(y, p) = 0$ and 
$$
\|y - y^*\| \leq \frac{\|F(y^*, p)\|}{\delta}.
$$
\end{lemma}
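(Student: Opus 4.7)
The statement is a quantitative perturbation result for parametric equality/set-constraint systems---essentially a covering/implicit function theorem with convex-set constraints---and is explicitly credited to Clarke's Lemma~3.3.3 and Theorem~3.3.4. My plan is therefore to recast the hypotheses in Clarke's framework, verify the constraint qualification he requires, and read off the conclusion.

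The first step is to recognize the condition that $\eta = 0$ is the unique solution of $0 \in \nabla_y F(y^*, p_0)^\top \eta + N_\Omega(y^*)$ as the Robinson/Mangasarian--Fromovitz-style constraint qualification for the feasibility system $\{F(y, p_0) = 0,\ y \in \Omega\}$ at $y^*$. By convex duality between the normal cone $N_\Omega(y^*)$ and the tangent cone $T_\Omega(y^*)$, this uniqueness is equivalent to the surjectivity statement $\nabla_y F(y^*, p_0)\, T_\Omega(y^*) = \mathbb{R}^{n_F}$, i.e.\ the linearized feasibility mapping is onto. This is precisely the hypothesis on which Clarke's metric-regularity/covering theorem for differentiable data rests.

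Next, I would apply Clarke's Theorem~3.3.4 to the mapping $y \mapsto F(y, p_0)$ subject to $y \in \Omega$. Since $F$ is $C^1$ in $y$ and $\Omega$ is convex, the Clarke generalized Jacobian reduces to the classical $\nabla_y F$ and the Clarke normal cone coincides with $N_\Omega$, so the theorem yields a local covering estimate: there exist $\delta_0, r_0 > 0$ such that for every $w \in \mathbb{R}^{n_F}$ with $\|w\| \leq r_0$ one finds $y \in \Omega$ near $y^*$ with $F(y, p_0) = w$ and $\|y - y^*\| \leq \|w\|/\delta_0$. To handle the parameter, I would use joint continuity of $F$ and of $\nabla_y F$ in $(y, p)$: for $p$ near $p_0$ the map $y \mapsto F(y, p)$ is a small $C^1$ perturbation of $y \mapsto F(y, p_0)$, so the surjectivity of the linearization persists uniformly, and the covering property survives with a slightly reduced constant $\delta$. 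Taking $w = 0$ in this perturbed covering statement, starting from the reference point $y^*$ whose residual is $F(y^*, p)$, produces a $y \in \Omega$ with $F(y, p) = 0$ and the quantitative bound $\|y - y^*\| \leq \|F(y^*, p)\|/\delta$. One then chooses $\xi > 0$ small enough that $\|p - p_0\| < \xi$ both keeps $\|F(y^*, p)\| \leq r_0$ (by continuity of $F(y^*, \cdot)$ and $F(y^*, p_0) = 0$) and keeps the perturbed linearization inside the regime where the uniform covering constant $\delta$ is valid.

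The main obstacle I anticipate is the bookkeeping needed to translate Clarke's nonsmooth formalism---generalized Jacobians, Clarke normal cones, and the open-covering lemma---into our smooth/convex setting while tracking the dependence of the constants on $p$. Once the dual characterization of the constraint qualification is in hand and the generalized objects collapse to their classical counterparts, the remaining work is essentially a uniform-in-$p$ version of the Lyusternik--Graves linear-rate existence bound, which is exactly what Clarke's Lemma~3.3.3 and Theorem~3.3.4 are designed to deliver.
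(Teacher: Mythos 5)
Your proposal is correct and follows essentially the same route as the paper, which proves this lemma simply by invoking Clarke's Lemma~3.3.3 and Theorem~3.3.4; your dual reading of the multiplier condition as surjectivity of $\nabla_y F(y^*,p_0)$ on $T_\Omega(y^*)$ and the subsequent covering/metric-regularity argument with uniform constants in $p$ is exactly the content of those cited results. No gap.
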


We denote the optimum value to Problem~\hyperref[equ: general pertube]{6} as $M^*(p)$. Our goal here is to show the continuity of $M^*$.

\begin{theorem}
\label{thm: vt continuity}
Assume that the function $F$ and an optimal solution $(y^*, p_0)$ to Problem~\hyperref[equ: general pertube]{6} satisfy all the assumptions in Lemma \ref{thm:clarkeexistance}, $\Omega$ is compact, and $M$ is a locally Lipschitz function with respect to $(y, p)$. Then, The optimal value function $\Rfunc{M^*}{n_p}{}$ to Problem~\hyperref[equ: general pertube]{6} is continuous at $p_0$.
\end{theorem}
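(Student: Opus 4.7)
The plan is to establish continuity of $M^*$ at $p_0$ by separately showing upper semicontinuity (there is a feasible point whose objective is not much larger than $M^*(p_0)$ for $p$ close to $p_0$) and lower semicontinuity (any near-optimal sequence along $p_n\to p_0$ cannot drop strictly below $M^*(p_0)$ in the limit). The two assumptions of Lemma~\ref{thm:clarkeexistance} give me the first direction, while compactness of $\Omega$ and continuity of $(M,F)$ give me the second.

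For upper semicontinuity, I would invoke Lemma~\ref{thm:clarkeexistance} directly: there exist $\delta,\xi>0$ so that for every $p$ with $\|p-p_0\|<\xi$ there is some $y(p)\in\Omega$ with $F(y(p),p)=0$ and $\|y(p)-y^*\|\le \|F(y^*,p)\|/\delta$. Since $F$ is continuous and $F(y^*,p_0)=0$, the right-hand side tends to $0$ as $p\to p_0$, so $y(p)\to y^*$. Feasibility of $y(p)$ gives $M^*(p)\le M(y(p),p)$, and local Lipschitz continuity of $M$ at $(y^*,p_0)$ yields $M(y(p),p)\to M(y^*,p_0)=M^*(p_0)$. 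This produces $\limsup_{p\to p_0} M^*(p)\le M^*(p_0)$.

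For lower semicontinuity, let $p_n\to p_0$ and pick, for each $n$, a feasible point $y_n\in\Omega$ with $F(y_n,p_n)=0$ and $M(y_n,p_n)\le M^*(p_n)+1/n$ (the feasible set at $p_n$ is nonempty for large $n$ by the previous step, and attainment of a near-minimum is automatic once the feasible set is nonempty). Because $\Omega$ is compact, a subsequence $y_{n_k}$ converges to some $\bar y\in\Omega$. Continuity of $F$ gives $F(\bar y,p_0)=\lim_k F(y_{n_k},p_{n_k})=0$, so $\bar y$ is feasible for the problem at $p_0$ and $M(\bar y,p_0)\ge M^*(p_0)$. Continuity (Lipschitz-on-a-neighborhood) of $M$ gives $M(y_{n_k},p_{n_k})\to M(\bar y,p_0)$, and by choosing $(n_k)$ to realize the liminf we get $\liminf_{n\to\infty} M^*(p_n)\ge M(\bar y,p_0)\ge M^*(p_0)$. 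Since $(p_n)$ was arbitrary, $\liminf_{p\to p_0} M^*(p)\ge M^*(p_0)$.

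Combining the two inequalities yields continuity at $p_0$. The main obstacle I anticipate is the bookkeeping for lower semicontinuity: I must know that the feasible set $\{y\in\Omega:F(y,p_n)=0\}$ is nonempty for all sufficiently large $n$ (so that $M^*(p_n)$ is well-defined and finite) and that a minimizer or near-minimizer can be chosen in a way that compatible subsequential limits stay feasible at $p_0$. Nonemptiness is delivered by Lemma~\ref{thm:clarkeexistance} for $p$ in an $\xi$-neighborhood of $p_0$; attainment is delivered by compactness of $\Omega$ together with continuity of $M(\cdot,p_n)$; and the limit's feasibility at $p_0$ follows from joint continuity of $F$. Everything else is a standard $\limsup/\liminf$ sandwich.
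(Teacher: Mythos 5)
Your proposal is correct and follows essentially the same route as the paper: the $\limsup$ bound comes from Lemma~\ref{thm:clarkeexistance} producing feasible points $y(p)\to y^*$ combined with the local Lipschitz property of $M$, and the $\liminf$ bound comes from compactness of $\Omega$ plus continuity of $F$ to show that subsequential limits of (near-)minimizers remain feasible at $p_0$. The only cosmetic difference is that you work with $1/n$-near-minimizers where the paper uses exact minimizers (which exist by compactness); both are valid.
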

\vspace{-5mm} 
\begin{proof}
See Appendix \ref{sec: proofs for background}.
\qed \end{proof}
\vspace{-2mm} 

\section{Mechanism of LCvx}
\label{sec: Mechanism of LCvx}
In this section, we provide a sufficient condition for the validity of \ac{LCvx} and introduce the long-horizon cases and the normal cases for Problem~\hyperref[equ:zero_order_prototype]{4}.

To start, we define the Lagrangian of Problem~\hyperref[equ:zero_order_prototype]{4} as follows:
\begin{equation}
\begin{aligned}
&L(x, u, \sigma ; \eta, \mu_1, \mu_2):=m\left(x_{N+1}\right)+\sum_{i=1}^{N} I_{V}\left(u_i, \sigma_i\right) \\
& +\sum_{i=1}^N l_i\left( \sigma_i\right) +\sum_{i=1}^{N} \eta_i^\top(-x_{i+1}+A x_{i}+B u_i) \\
& +\mu_1^\top\left(x_1-x_{init}\right)+\mu_2^\top G\left(x_{N+1}\right),
\end{aligned}
\end{equation}
where $I_V$ is the indicator function of set $V$:
\begin{equation}
 I_{V}(\bar u, \bar \sigma)=\left\{\begin{array}{cc}
0 & \text{if } (\bar u, \bar \sigma) \in V, \\
+\infty & \text{otherwise},
\end{array}\right.
\end{equation}
and the dual variable $\eta_i \in \MR^{n_x}$ for $i = 1, 2, \cdots, N, $ $\eta = [\eta_1^\top, \eta_2^\top, \cdots, \eta_N^\top]^\top \in \mathbb{R}^{n_x N}, $ $\mu_i \in \MR^{n_x}, $ and $ \mu_2 \in \MR^{n_G}.$ 
\begin{theorem}
\label{thm: lagrangian}
Suppose Assumption \ref{assumption: slater} holds, and let $(x^*, u^*, \sigma^*)$ be a solution of Problem~\hyperref[equ:zero_order_prototype]{4}. Then, there exist dual variables $(\eta^*, \mu_1^*, \mu_2^*)$ such that
\begin{equation}
\label{equ: min lagrangian}
\left(x^*, u^*, \sigma^* \right)=\operatorname{arg} \min_{x,u, \sigma} L\left(x, u, \sigma; \eta^*, \mu_1^*, \mu_2^*\right).
\end{equation}
\end{theorem}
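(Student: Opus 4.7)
The plan is to recognize this statement as a standard saddle-point / strong-duality result for convex programs and reduce it to the compact form in Problem 5, where only an affine equality constraint $H(z)=0$ and the convex set constraint $z\in\tilde V$ remain, and then cite off-the-shelf convex duality to extract the dual variables.

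First I would verify that Problem 5 is a bona fide convex program suitable for strong duality: the objective $\Phi(z)=m(x_{N+1})+\sum_i l_i(\sigma_i)$ is convex by Assumption 1 (note $l_i$ is applied to the scalar $\sigma_i$, not to $g(u_i)$, so no monotonicity-plus-composition subtlety is needed); the set $\tilde V$ is convex as a Cartesian product of copies of $V$, which is convex because $g$ is convex; and $H$ is affine, so $\{z:H(z)=0\}$ is an affine subspace. Assumption 4 (Slater) guarantees a point $\hat z\in\mathrm{int}(\tilde V)$ with $H(\hat z)=0$, which is the constraint qualification required below.

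Next I would appeal to the standard saddle-point theorem for convex programs with an affine equality constraint and a convex set constraint under a Slater-type condition (for instance, Rockafellar, \emph{Convex Analysis}, Theorem~28.3, or Boyd--Vandenberghe Section~5.5.3). This yields the existence of a multiplier $\nu^*\in\mathbb{R}^{n_H}$ associated with $H(z)=0$ such that the primal optimum $z^*=(x^*,u^*,\sigma^*)$ minimizes the partial Lagrangian
\begin{equation*}
\ell(z;\nu^*):=\Phi(z)+I_{\tilde V}(z)+\nu^{*\top}H(z)
\end{equation*}
over all $z\in\mathbb{R}^{n_z}$; the indicator $I_{\tilde V}$ enforces $z\in\tilde V$ implicitly so that the unconstrained minimization coincides with minimization over $\tilde V$. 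Strong duality is not itself needed in the statement, but it is precisely what provides this $\nu^*$.

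Finally I would split $\nu^*$ into blocks matching the three groups of affine constraints packed into $H$: the $N$ dynamics equalities $x_{i+1}-Ax_i-Bu_i=0$, the initial condition $x_1-x_{\mathrm{init}}=0$, and the terminal condition $G(x_{N+1})=0$. Naming the corresponding blocks $-\eta_i^*$, $\mu_1^*$, $\mu_2^*$ (the minus sign on $\eta_i^*$ absorbs the sign convention used in the stated $L$) and noting that $I_{\tilde V}(z)=\sum_{i=1}^N I_V(u_i,\sigma_i)$, the partial Lagrangian $\ell(\cdot;\nu^*)$ coincides term-by-term with the $L$ given in the paper. Thus $(x^*,u^*,\sigma^*)\in\arg\min_{x,u,\sigma}L(x,u,\sigma;\eta^*,\mu_1^*,\mu_2^*)$, which is the claim. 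The only real obstacle is clerical: matching the sign conventions on the dynamics multipliers and confirming that Assumption 4 as stated (interior of $\tilde V$ together with $H=0$) is exactly the constraint qualification the cited theorem requires; since the equality constraint is affine and the inequality description of $\tilde V$ has a strictly feasible point, no further regularity needs to be checked.
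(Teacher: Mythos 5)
Your proposal is correct and follows essentially the same route as the paper, which simply invokes the standard convex multiplier/saddle-point theorems (there cited from Clarke's text) under Slater's condition and reads off the dual blocks $(\eta^*,\mu_1^*,\mu_2^*)$ from the affine constraints. Your additional bookkeeping—verifying convexity of $\Phi$ and $\tilde V$, checking that Assumption \ref{assumption: slater} is the required constraint qualification, and matching sign conventions on the dynamics multipliers—is exactly what the paper leaves implicit.
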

\begin{proof}
This follows directly from~\citep[Theorem 9.4, 9.8 and 9.14]{clarke2013functional}.
\qed \end{proof}
From equation \eqref{equ: min lagrangian}, we derive:
\begin{align}
  &  \sigma^* =\operatorname{arg} \min_\sigma L\left(x^*, u^*, \sigma; \eta^*, \mu_1^*, \mu_2^*\right),  \\
  & u^*=\operatorname{arg} \min_u L\left(x^*,u, \sigma^*; \eta^*, \mu_1^*, \mu_2^*\right), \label{minimization on u} \\
  & x^* =\operatorname{arg} \min_x L\left(x, u^*, \sigma^*; \eta^*, \mu_1^*, \mu_2^*\right). \label{minimization on x}
\end{align}
Since in the Lagrangian $L$, $u_i$ only appears in terms $I_V(u_i, \sigma_i)$ and $\eta_i^\top B u_i$, we can write:
\begin{equation}
    \label{equ:ui}
\begin{aligned}
u_i^* & =\arg \min _{u_i} I_V\left(u_i, \sigma_i^*\right)+\eta_i^{* \top} B u_i \\
& =\arg \min _{u_i} I_{V\left(\sigma_i^*\right)}\left(u_i\right)+\eta_i^{* \top} B u_i .
\end{aligned}
\end{equation}
Equation \eqref{equ:ui} characterizes the relationship between $u_i^*$ and $\sigma_i^*$. With \eqref{equ:ui} in hand, we discuss the sufficient condition for \ac{LCvx} to be valid.
\begin{theorem}[Sufficient Condition for LCvx]
\label{lemma:LCvx lambda B}
Under Assumptions \ref{assumption: uicompactness} and \ref{assumption: slater}, the condition of $-B^{\top}\eta_i ^{*}$ being nonzero is sufficient for $u_i$ to lie on the boundary of $V(\sigma_i^*)$, thus validating \ac{LCvx} at the grid point $i$ by Lemma \ref{lemma: LCvx v(sigma)}. Furthermore, $\eta_i^*$ is given by $\eta_i^* = A^{\top (N-i)} \eta_N^*$.
\end{theorem}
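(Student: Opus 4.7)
The plan is to prove the two claims of Theorem~\ref{lemma:LCvx lambda B} separately, both exploiting the Lagrangian minimization structure supplied by Theorem~\ref{thm: lagrangian}. First I would establish the sufficient condition for LCvx validity via a standard convex-analysis boundary argument applied to \eqref{equ:ui}, and then I would derive the recurrence $\eta_i^* = A^{\top(N-i)}\eta_N^*$ from the first-order stationarity in the interior state variables implied by \eqref{minimization on x}.

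For the sufficient condition, equation~\eqref{equ:ui} exhibits $u_i^*$ as a minimizer of the linear functional $u \mapsto (B^{\top}\eta_i^*)^{\top} u$ over the convex set $V(\sigma_i^*)$. Under Assumption~\ref{assumption: uicompactness}, $V(\sigma_i^*)$ is a closed subset of the compact set $\{u \mid g(u) \leq \rho_{\max}\}$ and is therefore itself compact, so a minimizer exists. Suppose, for contradiction, that $-B^{\top}\eta_i^* \neq 0$ yet $u_i^*$ lies in the interior of $V(\sigma_i^*)$. Then for sufficiently small $\epsilon > 0$ the perturbation $u_i^* + \epsilon(-B^{\top}\eta_i^*)$ remains feasible and strictly decreases the linear objective by $\epsilon \|B^{\top}\eta_i^*\|^2 > 0$, contradicting optimality. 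Hence $u_i^* \in \partial V(\sigma_i^*)$, and Lemma~\ref{lemma: LCvx v(sigma)} immediately yields $\rho_{\min} \leq g(u_i^*) = \sigma_i^* \leq \rho_{\max}$, validating LCvx at grid point $i$.

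For the recurrence, \eqref{minimization on x} asserts that $x^*$ minimizes the Lagrangian $L$, which is convex and differentiable in $x$, so $\nabla_{x_j} L(x^*, u^*, \sigma^*; \eta^*, \mu_1^*, \mu_2^*) = 0$ for each interior index $j \in [2, N]$. Inspecting $L$, the only terms containing $x_j$ for such $j$ are $\eta_{j-1}^{*\top}(-x_j)$, arising from the dynamics constraint at step $j-1$, and $\eta_j^{*\top} A x_j$, arising from the dynamics constraint at step $j$; the terminal cost $m(x_{N+1})$, the initial penalty $\mu_1^{*\top}(x_1 - x_{\text{init}})$, and the boundary term $\mu_2^{*\top} G(x_{N+1})$ contribute nothing since $j \notin \{1, N+1\}$. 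Stationarity therefore yields $-\eta_{j-1}^* + A^{\top}\eta_j^* = 0$, i.e.\ the discrete adjoint recursion $\eta_{j-1}^* = A^{\top}\eta_j^*$. Iterating backward from $j = N$ produces the claimed closed form $\eta_i^* = A^{\top(N-i)}\eta_N^*$.

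The main obstacle is keeping the boundary argument airtight: compactness of $V(\sigma_i^*)$ is inherited from Assumption~\ref{assumption: uicompactness}, while the existence of the dual variables $(\eta^*, \mu_1^*, \mu_2^*)$ together with the characterizations \eqref{equ:ui} and \eqref{minimization on x} is supplied by Theorem~\ref{thm: lagrangian} under Slater's condition (Assumption~\ref{assumption: slater}). Beyond this, everything reduces to explicit differentiation of $L$ and a one-line convex-analysis contradiction, so I do not anticipate any deeper technical difficulty.
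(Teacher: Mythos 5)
Your proof is correct and follows essentially the same route as the paper: the adjoint recursion $\eta_{i-1}^* = A^{\top}\eta_i^*$ is obtained identically by differentiating the Lagrangian in the interior states, and your boundary argument for $u_i^*$ is just an elementary, self-contained version of the paper's normal-cone step ($-B^{\top}\eta_i^* \in N_{V(\sigma_i^*)}(u_i^*)$ is nonzero only at boundary points). No gaps.
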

\begin{proof}
We first address the sufficient condition for \( u_i^* \) to lie on the boundary of $V(\sigma_i^*)$. Given that the right-hand side function of equation \eqref{equ:ui} is convex in \( u_i \) and \( u_i^* \) is a minimizer of this convex function, it follows that\citep[Proposition 2.1.2]{borwein2006convex} 
\[
-B^{\top}\eta_i ^{*} \in N_{V(\sigma_i^*)}\left(u_i^*\right),
\]
where \( N_{V(\sigma_i^*)}\left(u_i\right) \) denotes the normal cone of the set \( V(\sigma_i^*) \) at point \( u_i^* \). The normal cone has nonzero elements only at the boundary points of \( V(\sigma_i^*) \)~\citep[Corollary 6.44]{bauschke2011convex}. Hence, \( -B^{\top}\eta_i ^{*} \) being nonzero implies that \( u_i \) lies on the boundary of \( V(\sigma_i^*) \). In such a case, \ac{LCvx} is valid at node $i$ according to Lemma \ref{lemma: LCvx v(sigma)}.

For the evolution of \( \eta_i \), considering equation \eqref{minimization on x}, we take the partial derivative of \( L \) with respect to \( x_i \) for all \( i \), yielding:
\begin{subequations}
\begin{align}
x_{N+1}: & \quad \nabla m\left(x_{N+1}\right) + \mu_2^\top \nabla G\left(x_{N+1}\right) = \eta^*_N, \label{eq:xN}\\
x_i: & \quad \eta_i^{*\top} A - \eta_{i-1}^{*\top} = 0, \label{eq:xi}\\
x_1: & \quad \mu_1^\top + \eta_1^{*\top} A = 0. \label{eq:x0}
\end{align}
\end{subequations}
Thus, iteratively applying \eqref{eq:xi} leads to  \( \eta^*_i = A^{\top(N-i)} \eta^*_N \). 
\qed \end{proof}

\begin{remark}
  Theorem \ref{lemma:LCvx lambda B} offers insight into why discrete \ac{LCvx} is typically valid in applications. We start by designing a slack variable $\sigma$ such that for a fixed $\sigma$, \ac{LCvx} is valid as long as the control $u$ is on the boundary of the feasible region defined by $\sigma$. Then, if a dual variable \(\eta_i^*\) is not in the left null space of matrix \(B\), the corresponding control \(u\) lies on the boundary of the feasible region, thus validating \ac{LCvx}.

\end{remark}

However, Theorem \ref{lemma:LCvx lambda B} does not guarantee the validity of \ac{LCvx}, as \( -B^{\top}\eta_i^* = 0 \) can occur for some \( i \). An obvious scenario for \( \eta_i^* = 0 \) is where \( \eta_N^* = 0 \). The following lemma characterizes this scenario.

\begin{lemma}
\label{thm: eta_n is zero}
Under Assumptions \ref{assumption: uicompactness} and \ref{assumption: slater}, if \( \eta_N^* = 0 \), then all \( \sigma_i^* = \rho_{\min} \), and \( x_{N+1} \) must be an optimal solution to the following optimization problem:
\newline
\textbf{Problem 7}:
\label{equ: boundaryonly}
\begin{equation}
\begin{aligned}
 \min_z \quad &m\left(x_{N+1}\right) \\
 \operatorname{s.t.} \quad &  G\left(x_{N+1}\right)=0.
\end{aligned}
\end{equation}
\end{lemma}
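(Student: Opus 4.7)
The plan is to exploit the backward recursion $\eta_i^* = A^{\top(N-i)}\eta_N^*$ derived in the proof of Theorem \ref{lemma:LCvx lambda B}: setting $\eta_N^* = 0$ propagates to $\eta_i^* = 0$ for every $i \in \intset{1}{N}$, which annihilates the coupling term $\eta_i^{*\top} B u_i$ in the Lagrangian. Combined with Theorem \ref{thm: lagrangian}, this decouples the Lagrangian minimization into $N$ independent per-step problems in $(u_i,\sigma_i)$ together with a static problem in the terminal state $x_{N+1}$.

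For the first conclusion, Theorem \ref{thm: lagrangian} forces $(u_i^*, \sigma_i^*)$ to jointly minimize $l_i(\sigma_i) + I_V(u_i,\sigma_i) + \eta_i^{*\top}Bu_i$ over $(u_i,\sigma_i)$, since these are the only Lagrangian terms depending on $(u_i,\sigma_i)$. With $\eta_i^* = 0$ this collapses to minimizing $l_i(\sigma_i)$ subject to $(u_i,\sigma_i)\in V$, i.e.\ $\rho_{\min}\le\sigma_i\le\rho_{\max}$ and $g(u_i)\le\sigma_i$. Since Assumption \ref{assumption: uicompactness} states that $l_i$ is (strictly) monotonically increasing on $[\rho_{\min},\infty)$ and that the set $\{u: g(u)\le\rho_{\min}\}$ is nonempty, the unique minimizing value of $\sigma_i$ is $\rho_{\min}$, attained by any $u_i$ with $g(u_i)\le\rho_{\min}$; hence $\sigma_i^* = \rho_{\min}$.

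For the second conclusion, I read off the stationarity equation \eqref{eq:xN} of $L$ in $x_{N+1}$. With $\eta_N^* = 0$ it becomes $\nabla m(x_{N+1}^*) + \mu_2^{*\top}\nabla G(x_{N+1}^*) = 0$, which, combined with the primal feasibility $G(x_{N+1}^*) = 0$, is exactly the KKT system for Problem \hyperref[equ: boundaryonly]{7} with Lagrange multiplier $\mu_2^*$. Because Problem \hyperref[equ: boundaryonly]{7} has a convex objective $m$ and an affine constraint $G$, these KKT conditions are sufficient for optimality, so $x_{N+1}^*$ is an optimal solution of Problem \hyperref[equ: boundaryonly]{7}.

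The main subtlety is reading ``monotonically increasing'' in Assumption \ref{assumption: uicompactness} as strictly increasing, which is what upgrades the statement ``$\sigma_i=\rho_{\min}$ is a minimizer'' to ``every minimizer equals $\rho_{\min}$'', and in particular pins down the specific $\sigma_i^*$ arising from the given optimum; everything else is essentially propagating the zero terminal co-state backward through the $\eta_i$-recursion and collecting first-order conditions.
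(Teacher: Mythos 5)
Your proposal is correct and follows essentially the same route as the paper's proof: propagate $\eta_N^*=0$ backward via $\eta_i^*=A^{\top(N-i)}\eta_N^*$, use Theorem \ref{thm: lagrangian} to reduce the $(u,\sigma)$-minimization to $\min\sum_i l_i(\sigma_i)$ over $V$, and read off the KKT system for Problem \hyperref[equ: boundaryonly]{7} from equation \eqref{eq:xN}. Your observation that ``monotonically increasing'' must be read as strict to conclude $\sigma_i^*=\rho_{\min}$ exactly (rather than merely that $\rho_{\min}$ is \emph{a} minimizer) is a fair point that the paper's proof passes over silently.
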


\begin{proof}
From Theorem \ref{lemma:LCvx lambda B}, \( \eta_N^* = 0 \) implies \( \eta_i^* = 0 \) for all $i$. By Theorem \ref{thm: lagrangian}, we have
\[
(\sigma^*, u^*)=\operatorname{arg} \min_{\sigma, u }\sum_{i=1}^N l_i\left( \sigma_i\right)+\sum_{i=1}^N I_V\left(u_i, \sigma_i\right).
\]
Given Assumption \ref{assumption: uicompactness}, we have that \( \sigma_i^* = \rho_{\min} \) and \( u_i \in V(\rho_{\min}) \).

We now prove the result on $x_{N+1}$. Equation \eqref{eq:xN} ensures that
\[
\nabla m\left(x_{N+1}\right)+\mu_2^\top \nabla G\left(x_{N+1}\right)=\eta^*_N = 0,
\]
indicating that \( x_{N+1} \) is a KKT point of the convex optimization Problem~\hyperref[equ: boundaryonly]{7} and thus a minimizer~\citep{borwein2006convex}. 
\qed \end{proof}

Theorem \ref{lemma:LCvx lambda B} provides a sufficient condition for \( \eta_N^* \neq 0 \), leading to a natural division of Problem~\hyperref[equ:zero_order_prototype]{4} into two scenarios:
\begin{definition}
Problem~\hyperref[equ:zero_order_prototype]{4} is defined as the long-horizon case if \( \sigma_i^* = \rho_{\min} \) and \( x_{N+1} \) solves Problem~\hyperref[equ: boundaryonly]{7}. If these conditions are not satisfied, Problem~\hyperref[equ:zero_order_prototype]{4} is classified as the normal case.
\end{definition}

The term long-horizon is used because \( \eta_N^* = 0 \) typically occurs when the control horizon is large. For a problem in the normal case, \( \eta_N^* \neq 0 \) according to Lemma \ref{thm: eta_n is zero}. In previous \ac{LCvx} research, \( \eta_N^* \neq 0 \) is usually ensured by assumptions, for example~\citep[Condition 2]{malyuta2022convex} and~\citep[Condition 2]{accikmecse2011lossless}. By identifying the long-horizon case, we can omit such types of assumptions.

In the next section, we discuss the theoretical guarantee for normal cases. The long-horizon cases will be left for the subsequent section. 
\section{LCvx on Normal cases}
\label{sec: normal cases}

In this section, we concentrate on problems in the normal case and thus we assume that

\begin{assumption}
\label{assumption: eta_N nonzero}
Problem~\hyperref[equ:zero_order_prototype]{4} is normal, i.e., either $\sigma_i > \rho_{\min}$ for some $i$ or $x_{N+1}$ is not the minimizer for Problem~\hyperref[equ: boundaryonly]{7}.
\end{assumption}
As discussed previously, Assumption \ref{assumption: eta_N nonzero} implies that $\eta_N^* \neq 0$. A direct consequence of this assumption is:

\begin{corollary}
\label{thm: total controllable B}
Given Assumptions \ref{assumption: uicompactness}, \ref{assumption: slater} and \ref{assumption: eta_N nonzero}, if the matrix $A$ and $B$ are invertible, then \ac{LCvx} is valid at all temporal grid points.
\end{corollary}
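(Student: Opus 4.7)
The plan is to chain three results already established in the excerpt: the sufficient condition of Theorem \ref{lemma:LCvx lambda B}, namely that $-B^\top \eta_i^* \neq 0$ implies LCvx validity at grid point $i$, together with the backward formula $\eta_i^* = A^{\top(N-i)}\eta_N^*$ proved in that same theorem; the characterization in Lemma \ref{thm: eta_n is zero} of the case $\eta_N^* = 0$; and the invertibility hypotheses on $A$ and $B$ supplied here.

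First I would apply Assumption \ref{assumption: eta_N nonzero} contrapositively against Lemma \ref{thm: eta_n is zero}. That lemma shows $\eta_N^* = 0$ forces $\sigma_i^* = \rho_{\min}$ for all $i$ and $x_{N+1}^*$ to solve Problem~\hyperref[equ: boundaryonly]{7}, which is precisely the long-horizon configuration excluded by the normal-case assumption. Hence $\eta_N^* \neq 0$.

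Next I would use the recursion $\eta_i^* = A^{\top(N-i)}\eta_N^*$. Since $A$ is invertible, so is $A^\top$, and therefore so is every nonnegative integer power $A^{\top(N-i)}$ for $i \in \intset{1}{N}$ (with the usual convention $A^{\top 0} = I$ at $i=N$). Consequently $\eta_i^* \neq 0$ for every $i$. Invertibility of $B$ makes $B^\top$ an invertible linear map as well, so $-B^\top \eta_i^* \neq 0$ for all $i$. Applying Theorem \ref{lemma:LCvx lambda B} grid point by grid point then delivers the conclusion.

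There is no real obstacle: the corollary is a short composition of statements already on the page. Its interest lies in isolating exactly where rank deficiency will eventually bite. The typical applied setting has $n_u < n_x$, so $B$ cannot be invertible and this clean argument breaks down; the dimension mismatch is precisely what allows $B^\top \eta_i^*$ to vanish at isolated grid points, motivating the finer counting analysis developed in the remainder of Section \ref{sec: normal cases}.
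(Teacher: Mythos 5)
Your argument is correct and matches the paper's own proof essentially step for step: normality gives $\eta_N^* \neq 0$ via Lemma \ref{thm: eta_n is zero}, invertibility of $A$ propagates this to all $\eta_i^*$ through $\eta_i^* = A^{\top(N-i)}\eta_N^*$, and invertibility of $B$ then yields $B^\top\eta_i^* \neq 0$, triggering Theorem \ref{lemma:LCvx lambda B} at every grid point. No differences worth noting.
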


\begin{proof}
When $\eta_N^* \neq 0$, since $\eta^*_i=A^{\top(N-i)} \eta^*_N$, a full rank matrix $A$ ensures that $\eta_i^*$ is non-zero for all $i$. The invertibility of $B$ implies that $\eta_i^{*\top} B$ is non-zero for every $i$, satisfying the conditions of Theorem \ref{lemma:LCvx lambda B} for \ac{LCvx} to hold.
\qed \end{proof}

However, Corollary \ref{thm: total controllable B} might not always be applicable because it requires matrix $B$ to be a square matrix, which means that the control and state dimensions must be equal; In practice, the state dimension often significantly exceeds the control dimension.

When the state dimension is larger than the control dimension, the validity of \ac{LCvx} at all grid points is not guaranteed. Numerical examples in Section \ref{sec:numerical} show the invalidity of \ac{LCvx} at multiple grid points for a non-square matrix $B$, even when the \ac{LCvx} problem is normal. Therefore, we aim to establish a weaker statement: the original nonconvex constraints are violated at most at \(n_x - 1\) temporal grid points, where \(n_x\) is the state dimension. However, this statement is still invalid as counterexamples exist (see Section \ref{sec:Artificial normal example }). Thus, we consider an even weaker statement: after adding a random perturbation to the dynamics matrix $A$ in Problem~\hyperref[equ:zero_order_prototype]{4}, the nonconvex constraints are violated at no more than $n_x - 1$ temporal grid points.

\subsection{Pertubation}
We define an arbitrarily small perturbation in the context of matrix $A$'s Jordan form, where $A$ is defined in the dynamics constraints of Problem~\hyperref[equ:original_LCvx]{3} and Problem~\hyperref[equ:zero_order_prototype]{4}:
\begin{definition}
\label{def: perturbation}
Let matrix $A$'s Jordan form be
$$
A = Q^{-1} J Q.
$$  
Let $\left\{\lambda_1, \ldots, \lambda_d\right\}$ be the set of all distinct eigenvalues of $A$. Define $\tilde A(q)$ as a perturbation of $A$ with respect to a vector $q=(q_1, q_2, \cdots, q_d) \in \mathbb{R}^d$ if:
\begin{itemize}
    \item $\tilde A(q) = Q^{-1} \tilde J Q$, where $\tilde{J}$ is a Jordan matrix that is identical to $J$ except for its diagonal elements.
    \item $\tilde A(q)$ has $d$ eigenvalues $(\tilde \lambda_1, \cdots, \tilde\lambda_d)$ such that $$(\tilde \lambda_1, \cdots, \tilde\lambda_d) = (\lambda_1, \lambda_2, \cdots, \lambda_d) + (q_1, q_2, \cdots, q_d).$$
    \item The index of Jordan blocks in $\tilde{J}$ corresponding to $\tilde{\lambda}_i$ remains the same as in $J$ corresponding to $\lambda_i$.
\end{itemize}
\end{definition}
In essence, perturbing a matrix involves altering its eigenvalues. We intentionally preserve the Jordan form as it may convey important structural information about the dynamical system.

We now perturb Problem~\hyperref[equ:zero_order_prototype]{4} by replacing the matrix \( A \) in the dynamic constraints with \(\tilde{A}(q)\), where \(\tilde{A}(q)\) represents a perturbation of \( A \) with respect to the vector \( q \in \mathbb{R}^d \). Consequently, the optimization problem can be reformulated as follows:
\newline
\textbf{Problem 8}:
\label{equ:perturbed_zero_order_prototype}
\begin{equation}
\begin{array}{ll}
\underset{x,u,\sigma}{\min}& m\left(x_{N+1}\right)+\sum_{i = 1}^N  l_i\left( \sigma_i\right) \\
 \operatorname{s.t.} \quad & x_{i+1}=\tilde{A}(q) x_{i} + B u_i, \quad i\in \intset 1 N \\
& \rho_{\min } \leq \sigma_i \leq \rho_{\max },\ g(u_i) \leq \sigma_i,\ i\in \intset 1 N \\
& x_1 = x_{init}, \quad G\left(x_{N+1}\right)=0. 
\end{array}    
\end{equation}
If $q = 0,$ we restore Problem~\hyperref[equ:zero_order_prototype]{4}.

In the remainder of this paper, \(\mathbb{D} \subset \mathbb{R}^n\) denotes an \(n\)-dimensional unit cube, defined as \(\mathbb{D} = \{x \in \mathbb{R}^n \mid -1 \leq x_i \leq 1, \forall i = 1, \ldots, n\}\), where \(x_i\) is the \(i\)-th component of \(x\) in \(\mathbb{R}^n\). For \(\epsilon > 0\), we define the scaled cube \(\epsilon\mathbb{D}\) as \(\{x \in \mathbb{R}^n \mid -\epsilon \leq x_i \leq \epsilon, \forall i = 1, \ldots, n\}\). The following theorem ensures that Assumptions  \ref{assumption:controllability}, \ref{assumption: dynamics full rank}, \ref{assumption: slater}, and \ref{assumption: eta_N nonzero} remain valid for the perturbed problem~\hyperref[equ:perturbed_zero_order_prototype]{8} under sufficiently small perturbations \(q \in \epsilon \mathbb{D}\).

\begin{lemma}
\label{lemma: perturbation_all_stability}
Suppose that Assumptions \ref{assumption: uicompactness}, \ref{assumption:controllability},  \ref{assumption: dynamics full rank}, \ref{assumption: slater} and  \ref{assumption: eta_N nonzero} hold for Problem~\hyperref[equ:perturbed_zero_order_prototype]{8} when $q = 0$. Then there exists an \(\epsilon > 0\) such that for any \(q \in \epsilon \mathbb{D} \subset \MR^d \),  these assumptions still hold for Problem~\hyperref[equ:perturbed_zero_order_prototype]{8} after perturbation by $q$.
\end{lemma}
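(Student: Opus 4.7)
The plan is to verify each of the four assumptions in turn, exploiting that controllability and full-rank conditions are open under continuous perturbations of matrix data, that Slater's condition is preserved via Lemma~\ref{thm:clarkeexistance}, and that the normal-case property is preserved by a contradiction argument using the value-continuity Theorem~\ref{thm: vt continuity}. I would establish a separate radius $\epsilon_k$ for each assumption and set $\epsilon=\min_k\epsilon_k$.

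Assumptions~\ref{assumption:controllability} and~\ref{assumption: dynamics full rank} follow from standard open-condition arguments: the controllability matrix $\bigl(B,\tilde A(q)B,\ldots,\tilde A(q)^{n_x-1}B\bigr)$ depends polynomially on $q$, so full rank at $q=0$ persists on some cube $\epsilon_1\mathbb{D}$ via the nonvanishing of an $n_x\times n_x$ minor; an analogous argument applied to $\nabla H$, which is also continuous in $q$, yields $\epsilon_2$. For Assumption~\ref{assumption: slater}, I would fix an interior feasible point $z_0$ of the unperturbed problem and note that $N_{\tilde V}(z_0)=\{0\}$, so the hypothesis of Lemma~\ref{thm:clarkeexistance} holds at $(z_0,0)$ with $F(z,q)=H(z,q)$ (uniqueness of $\eta=0$ in the normal-cone inclusion being exactly the full-row-rank condition just established). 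The lemma produces $\delta,\xi>0$ such that for $\|q\|<\xi$ there is some $z(q)\in\tilde V$ with $H(z(q),q)=0$ and $\|z(q)-z_0\|\leq\|H(z_0,q)\|/\delta$; continuity of $H(z_0,\cdot)$ and its vanishing at $q=0$ then force $z(q)$ to lie in the interior of $\tilde V$ on a cube $\epsilon_3\mathbb{D}$.

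The main difficulty is Assumption~\ref{assumption: eta_N nonzero}, because it constrains the optimal solution rather than constraint data. I would argue by contradiction: suppose there exists $q_k\to 0$ along which Problem~\hyperref[equ:perturbed_zero_order_prototype]{8} is long-horizon, with optimal solutions $(x_k^*,u_k^*,\sigma_k^*)$ satisfying $\sigma_{k,i}^*=\rho_{\min}$ for every $i$ and $x_{k,N+1}^*$ minimizing Problem~\hyperref[equ: boundaryonly]{7}. By Assumption~\ref{assumption: uicompactness} the inputs $u_k^*$ lie in a fixed compact set; the discrete dynamics (with bounded $\tilde A(q_k)$) then force $x_k^*$ to remain bounded, so a subsequence converges to some $(x^\star,u^\star,\sigma^\star)$. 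Continuity of the dynamics in $q$ gives feasibility of this limit for the unperturbed problem, and the value-continuity Theorem~\ref{thm: vt continuity} (applied with $\Omega$ taken as $\tilde V$ intersected with a bounded box large enough to contain all relevant trajectories, so that the compactness hypothesis is met) yields $\Phi(x^\star,u^\star,\sigma^\star)=\Phi^*(0)$, making the limit optimal for the unperturbed problem. Passing to the limit preserves $\sigma_i^\star=\rho_{\min}$ for all $i$, and closedness of the minimizer set of Problem~\hyperref[equ: boundaryonly]{7} preserves $x_{N+1}^\star$ as a minimizer, so the unperturbed problem admits a long-horizon optimal solution, contradicting Assumption~\ref{assumption: eta_N nonzero}. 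The two technical points requiring explicit handling are (i) setting up Theorem~\ref{thm: vt continuity} on an effectively compact domain, and (ii) phrasing the normal-case negation so that a single long-horizon optimizer at the limit suffices to trigger the contradiction; neither is conceptually deep, but both need a careful writeup.
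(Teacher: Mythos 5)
Your treatment of Assumptions~\ref{assumption:controllability}, \ref{assumption: dynamics full rank}, and \ref{assumption: slater} matches the paper's: open-condition/determinant-continuity arguments for the two rank conditions, and Lemma~\ref{thm:clarkeexistance} (whose normal-cone hypothesis is verified exactly as in the paper's Lemma~\ref{lemma: cQ}, via full row rank of $\nabla H$ plus the interior point) to produce a nearby feasible point that stays interior to $\tilde V$. The only genuine divergence is in preserving Assumption~\ref{assumption: eta_N nonzero}. You argue by contradiction: take $q_k\to 0$ with each perturbed problem long-horizon, extract a convergent subsequence of optimizers, and show the limit is a long-horizon optimizer of the unperturbed problem. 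The paper instead observes that the optimal value always satisfies $\Phi^*(q)\ge m^*+\sum_i l_i(\rho_{\min})$, that the long-horizon case is exactly the case of equality, and that normality at $q=0$ means strict inequality there; Theorem~\ref{thm: vt continuity} then keeps the value strictly above the threshold for small $q$. Both routes rest on the same value-continuity theorem, but the paper's scalar comparison buys you immunity to the issue you flag in your point (ii): if the unperturbed problem has multiple optimizers, your limit point being long-horizon only contradicts normality after you argue (as you would have to) that the existence of a single long-horizon optimizer forces the optimal value down to the threshold and hence reclassifies the whole problem --- at which point you have reproduced the paper's value argument anyway. Your point (i) about compactness is also handled the same way in the paper (a trivial upper bound on $\sigma$, with $x$ bounded through the dynamics), so neither technical caveat is fatal; I would just recommend stating the normality step directly in terms of the optimal value rather than limit points of optimizers.
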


\begin{proof}
    See Appendix \ref{sec: proof for stability}. 
\qed \end{proof}
\begin{remark}
In Lemma \ref{lemma: perturbation_all_stability}, we choose \(q\) from a cube region instead of the more conventional ball region, because we later want to generate \(q\) via a uniform distribution, and the cube region ensures independence for each dimension of \(q\).

\end{remark}

Lemma \ref{lemma: perturbation_all_stability} establishes that Problem~\hyperref[equ:perturbed_zero_order_prototype]{8} follows the same assumptions as Problem~\hyperref[equ:zero_order_prototype]{4}. Therefore, all discussions in Section \ref{sec: Mechanism of LCvx} regarding Problem~\hyperref[equ:zero_order_prototype]{4} are also applicable to Problem~\hyperref[equ:perturbed_zero_order_prototype]{8} for sufficiently small $q$.

\subsection{Quantify the LCvx invalidation}
The goal for this section is to demonstrate that when the perturbation \(q\) is chosen randomly, the probability that \ac{LCvx} is invalid for problem~\hyperref[equ:perturbed_zero_order_prototype]{8} at no more than \(n_x-1\) points is zero. To begin, we examine the necessary conditions for Problem~\hyperref[equ:perturbed_zero_order_prototype]{8} to violate the original nonconvex constraints at the grid points indexed by \(P_1, P_2, \cdots, P_k \in \mathbb{N}\).

\begin{lemma}
\label{lemma: activate Srank}
For Problem~\hyperref[equ:perturbed_zero_order_prototype]{8} with a fixed perturbation $q$, we define 
$$
S = \left(\begin{array}{llll}
\tilde A(q)^{N-P_1}B & \tilde A(q)^{N-P_2} B & \cdots &\tilde A(q)^{N-P_k} B
\end{array}\right),
$$
where $P_1, P_2, \cdots, P_k \in \mathbb{N}$ are temporal grid points. Suppose that Assumptions \ref{assumption: uicompactness}, \ref{assumption: slater}, and \ref{assumption: eta_N nonzero} hold for Problem~\hyperref[equ:perturbed_zero_order_prototype]{8}. If \ac{LCvx} is invalid at the temporal grid points \( P_1, P_2, \cdots, P_k \in \mathbb{N} \), then the rank of \( S \) satisfies \( \operatorname{rank}(S) < n_x \).
\end{lemma}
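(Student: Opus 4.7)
The plan is to show that LCvx being invalid at the grid points $P_1,\dots,P_k$ forces $\eta_N^{*}$ to lie in the left null space of $S$, and then to invoke the normality assumption to conclude $\eta_N^{*} \neq 0$, so $S$ cannot have rank $n_x$.

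First, I would unpack what ``invalid at grid point $P_j$'' means in terms of $V(\sigma_{P_j}^*)$. By the feasibility constraints of Problem~\hyperref[equ:perturbed_zero_order_prototype]{8} we always have $g(u_{P_j}^*) \leq \sigma_{P_j}^* \leq \rho_{\max}$, so invalidity can only manifest as $g(u_{P_j}^*) < \rho_{\min} \leq \sigma_{P_j}^*$. In particular $g(u_{P_j}^*) < \sigma_{P_j}^*$ strictly, and by continuity of $g$ the point $u_{P_j}^*$ lies in the interior of $V(\sigma_{P_j}^*)$. Consequently $N_{V(\sigma_{P_j}^*)}(u_{P_j}^*) = \{0\}$ by standard convex analysis, so the minimization identity \eqref{equ:ui} forces $-B^\top \eta_{P_j}^* = 0$.

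Second, I would invoke Lemma~\ref{lemma: perturbation_all_stability} (via the hypothesis that Assumptions \ref{assumption: uicompactness}, \ref{assumption: slater}, \ref{assumption: eta_N nonzero} hold for the perturbed problem) together with the backward recursion in Theorem~\ref{lemma:LCvx lambda B} to write $\eta_{P_j}^* = \tilde{A}(q)^{\top (N-P_j)} \eta_N^*$ for every $j$. Combining this with the previous step gives
\begin{equation*}
\eta_N^{*\top} \tilde{A}(q)^{N-P_j} B = \eta_{P_j}^{*\top} B = 0, \qquad j = 1,\dots,k,
\end{equation*}
which is exactly the statement $\eta_N^{*\top} S = 0$. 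Because Problem~\hyperref[equ:perturbed_zero_order_prototype]{8} is in the normal case, Lemma~\ref{thm: eta_n is zero} (contrapositively) yields $\eta_N^* \neq 0$. Hence $S$ admits a nontrivial left null vector, forcing $\operatorname{rank}(S) < n_x$.

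The argument is mostly a careful bookkeeping of results already established earlier in the paper, so there is no single hard step; the only delicate point is the first one, namely translating the algebraic notion ``invalid'' (i.e., $g(u_{P_j}^*) < \rho_{\min}$) into the geometric fact ``$u_{P_j}^*$ is interior to $V(\sigma_{P_j}^*)$''. This hinges on using the constraint $\sigma_{P_j}^* \geq \rho_{\min}$ to rule out the boundary possibility $g(u_{P_j}^*) = \sigma_{P_j}^*$ and on the continuity of $g$ to upgrade a strict inequality to a topological interior statement, after which the normal cone characterization from~\citep[Corollary 6.44]{bauschke2011convex} closes the gap.
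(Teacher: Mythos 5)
Your proposal is correct and follows essentially the same route as the paper's proof: LCvx invalidity at $P_j$ forces $B^{\top}\eta_{P_j}^{*}=0$ (the paper cites this directly as the contrapositive of Theorem~\ref{lemma:LCvx lambda B}, whereas you re-derive it via the trivial normal cone at an interior point of $V(\sigma_{P_j}^{*})$), then the adjoint recursion $\eta_{P_j}^{*}=\tilde{A}(q)^{\top(N-P_j)}\eta_N^{*}$ and the normality assumption give a nonzero left null vector of $S$. Your extra care in translating ``invalid'' into ``interior of $V(\sigma_{P_j}^{*})$'' is a sound and slightly more self-contained version of the same step.
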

\begin{proof}
According to Theorem \ref{lemma:LCvx lambda B},  the necessary condition for \ac{LCvx} to be violated at the $i$-th point is: $
\eta_i^\top B = 0 
$
Given that $\eta_i^\top =  \eta_N^\top \tilde A(q)^{N-i}$, it follows that $\eta_N^\top$ must lie in the left null space of $\tilde A(q)^{N-i}B$.  Consequently, if \ac{LCvx} is invalid at $\{P_1, P_2, \cdots, P_k\}$, $\eta_N^\top$ must lie in the left null space of the matrix $S$. According to Assumption \ref{assumption: eta_N nonzero}, $\eta_N$ is nonzero. Hence $S$ has nontrivial left null space. Then rank$(S)<n_x$.
\qed \end{proof}


Now we have the main theorem.

\begin{theorem}
\label{theorem: main_theorem}
Given that Assumptions \ref{assumption: uicompactness}, \ref{assumption: dynamics full rank}, \ref{assumption: slater} and \ref{assumption: eta_N nonzero} hold for Problem~\hyperref[equ:zero_order_prototype]{4}, along with the controllability of \(\{A, B\}\). Then, there exists an \(\epsilon > 0\) such that if a vector \(q \in \MR^d\) is uniformly distributed over cube \(\epsilon \mathbb{D} \in \mathbb{R}^d\), the probability of the optimal trajectory for Problem~\hyperref[equ:perturbed_zero_order_prototype]{8} violating the nonconvex constraints in Problem~\hyperref[equ:original_LCvx]{3}  at more than \(n_x-1\) grid points is zero.
\end{theorem}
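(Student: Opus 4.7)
The plan is to reduce the probabilistic statement to the algebraic nonvanishing of certain minors of the matrix $S(q)$ from Lemma~\ref{lemma: activate Srank}. First, I apply Lemma~\ref{lemma: perturbation_all_stability} to choose $\epsilon>0$ small enough that Assumptions~\ref{assumption:controllability}, \ref{assumption: dynamics full rank}, \ref{assumption: slater}, and \ref{assumption: eta_N nonzero} all hold for Problem~\hyperref[equ:perturbed_zero_order_prototype]{8} for every $q\in\epsilon\mathbb{D}$. This ensures that the Lagrangian analysis of Section~\ref{sec: Mechanism of LCvx}---in particular Theorem~\ref{lemma:LCvx lambda B} and Lemma~\ref{lemma: activate Srank}---is valid for every realization of the random perturbation in the support of the uniform distribution.

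Next I reduce the claim to a finite union bound. If LCvx is invalid at grid points indexed by some $\mathcal{P}=\{P_1,\ldots,P_k\}\subset\{1,\ldots,N\}$ with $k\ge n_x$, then Lemma~\ref{lemma: activate Srank} gives
\[
S_{\mathcal{P}}(q) = \bigl(\tilde{A}(q)^{N-P_1}B,\ \tilde{A}(q)^{N-P_2}B,\ \ldots,\ \tilde{A}(q)^{N-P_k}B\bigr), \qquad \operatorname{rank}(S_{\mathcal{P}}(q)) < n_x.
\]
Because $\{1,\ldots,N\}$ has only finitely many subsets of size at least $n_x$, it suffices to show that, for each such fixed $\mathcal{P}$, the set $\{q\in\epsilon\mathbb{D}:\operatorname{rank}(S_{\mathcal{P}}(q))<n_x\}$ has Lebesgue measure zero; the desired probability bound then follows from a union bound over these finitely many subsets.

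By Definition~\ref{def: perturbation}, $\tilde{A}(q)$ depends linearly on $q$, so each entry of $S_{\mathcal{P}}(q)$ and therefore each $n_x\times n_x$ minor of $S_{\mathcal{P}}(q)$ is a polynomial in $q$. The rank deficiency $\operatorname{rank}(S_{\mathcal{P}}(q))<n_x$ is equivalent to the simultaneous vanishing of all such minors. Invoking the classical fact that the zero locus of a nonzero polynomial on $\mathbb{R}^d$ has Lebesgue measure zero, the task reduces to exhibiting, for each admissible $\mathcal{P}$, one $n_x\times n_x$ minor of $S_{\mathcal{P}}(q)$ that is not identically zero as a polynomial in $q$.

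The main obstacle is producing such a witness uniformly over all admissible $\mathcal{P}$. I would do this by selecting a single point $q^{\star}\in\mathbb{C}^d$ at which $\operatorname{rank}(S_{\mathcal{P}}(q^{\star}))=n_x$. A natural choice is any $q^{\star}$ for which the shifted eigenvalues $\tilde{\lambda}_i=\lambda_i+q^{\star}_i$ are distinct and in sufficiently generic position (e.g.\ algebraically independent over $\mathbb{Q}$), which is generic because the discriminant of the characteristic polynomial of $\tilde{A}(q)$ is a nonzero polynomial in $q$. Passing to the Jordan basis $Q$ of $A$, the columns of $S_{\mathcal{P}}(q^{\star})$ acquire a generalized Vandermonde structure: within each Jordan block of $\tilde{A}(q^{\star})$ the powers $\tilde{A}(q^{\star})^{N-P_l}$ produce entries of the form $\binom{N-P_l}{j}\tilde{\lambda}_i^{\,N-P_l-j}$, multiplied by the corresponding rows of $QB$. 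The controllability Assumption~\ref{assumption:controllability} (via its Popov--Belevitch--Hautus form) guarantees that the rows of $QB$ associated with each Jordan block are nonzero; combined with the nondegeneracy of generalized Vandermonde matrices formed from $\ge n_x$ distinct exponents $N-P_l$ at algebraically independent nodes $\tilde{\lambda}_i$, some $n_x\times n_x$ minor of $S_{\mathcal{P}}(q^{\star})$ must be nonzero. This completes the polynomial nonvanishing step and hence the theorem.
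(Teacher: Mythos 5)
Your setup and measure-theoretic reduction are sound, and in fact take a slightly different route from the paper: you reduce to a finite union bound over index sets $\mathcal{P}$, note that rank deficiency of $S_{\mathcal{P}}(q)$ is the common zero locus of finitely many polynomials in $q$ (since $\tilde A(q)$ is affine in $q$ by Definition~\ref{def: perturbation}), and invoke the fact that a nonzero polynomial vanishes on a Lebesgue-null set, so a single witness point suffices. The paper instead conditions on the perturbed eigenvalues one at a time and runs an induction on nested submatrices $C_1, C_2, \ldots, C_d = \Lambda$, extracting the leading coefficient of $\det(C_{k+1})$ as a polynomial in $\tilde{\lambda}_{k+1}$. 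Your reduction is valid and avoids the conditioning argument.

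The genuine gap is in your witness step, which is the mathematical heart of the theorem. You assert that at a generic $q^{\star}$, PBH controllability (which you summarize as ``the rows of $QB$ associated with each Jordan block are nonzero'') combined with nondegeneracy of generalized Vandermonde matrices forces some $n_x \times n_x$ minor of $S_{\mathcal{P}}(q^{\star})$ to be nonzero. First, the stated consequence of PBH is too weak: when an eigenvalue has several Jordan blocks, PBH requires the rows of $QB$ at the terminal rows of those blocks to be linearly independent, not merely nonzero (e.g.\ $\tilde A = \lambda I_2$, $B = (1,1)^{\top}$ has every row of $QB$ nonzero yet is uncontrollable). Second, and more importantly, you never prove that the confluent Vandermonde structure and the $B$-rows combine to yield a nonzero minor; in the multi-input case with nontrivial Jordan blocks this interaction is precisely the hard part. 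The paper closes exactly this gap with a Cayley--Hamilton argument: writing $\tilde A(q)^{P_i} = \sum_{j} V_{ij}(q)\, \tilde A(q)^{j-1}$, it shows that invertibility of the coefficient matrix $V(q)$ --- equivalently, of a confluent generalized Vandermonde matrix $\Lambda$ built only from the perturbed eigenvalues and the exponents --- forces any left null vector of $S(q)$ to annihilate $\tilde A(q)^{j}B$ for all $j \le n_x - 1$, contradicting controllability of the perturbed pair (preserved by Lemma~\ref{lemma: perturbation_all_stability}). This decouples the eigenvalue-genericity question from $B$ entirely and reduces your witness step to the nonvanishing of $\det\Lambda$, a pure generalized-Vandermonde fact. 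With that bridge inserted, your polynomial-nonvanishing framework would go through.
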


\begin{proof}   
See Appendix \ref{sec: proof for the main theorem}. The key to this proof is to use Lemma \ref{lemma: activate Srank}. We show that for a sufficiently small perturbation \(q\) and for arbitrary integers \(1 \leq P_1 < P_2 < \cdots < P_{n_x} < N+1\), where \(N\) is the total amount of temporal grid points, the matrix \(S\) defined in Lemma \ref{lemma: activate Srank} is of full rank with probability one.\qed \end{proof}

We also want to estimate the influence of the perturbation. Taking the control sequence that is optimum to Problem~\hyperref[equ:perturbed_zero_order_prototype]{8}, and applying it to the unperturbed dynamics, we are expecting that when perturbation $q$ is sufficiently small, the perturbation does not lead to a significant difference in the output of the unperturbed dynamics. 
\begin{theorem}
\label{thm: perturbation error control}
Suppose Assumptions \ref{assumption: uicompactness}, \ref{assumption: dynamics full rank}, \ref{assumption: slater} and  \ref{assumption: eta_N nonzero} hold for Problem~\hyperref[equ:perturbed_zero_order_prototype]{8}. Let the control sequence obtained by solving Problem~\hyperref[equ:perturbed_zero_order_prototype]{8} as \(u(q)\) and apply this control to the unperturbed dynamics, leading to a trajectory
\begin{equation}
\begin{aligned}
\bar{x}_{N+1}(q) & =x_1+\sum_{i=1}^N A^{i-1} B u_i(q) \\
& =x_1+\sum_{i=1}^N \bar{A}^{i-1}(0) B u_i(q)
\end{aligned}
\end{equation} where $u_i(q)$ is the control sequence $u(q)$ at the $i$-th temporal grid point. 
Then, for any \(\delta > 0\), there exists an \(\epsilon > 0\) such that for any \(\|q\| < \epsilon\), $\tilde x_{N+1}(q)$ and $u(q)$ satisfy:
 \begin{itemize}
    \item \(\|G(\tilde{x}_{N+1}(q))\| \leq \delta\),
    \item \(m(\tilde{x}_{N+1}(q)) + \sum_{i = 1}^N l_i(g(u_i(q))) \leq m^*_o + \delta\), 
 \end{itemize}
where \(m^*_o\) is the optimum value of the discretized nonconvex Problem~\hyperref[equ:original_LCvx]{3}.     
\end{theorem}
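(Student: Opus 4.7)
The plan is to control three sources of discrepancy: the distance between the unperturbed-dynamics trajectory $\bar x(q)$ and the perturbed optimal trajectory $x(q)$, the deviation of the perturbed optimum $M^*(q)$ of Problem~\hyperref[equ:perturbed_zero_order_prototype]{8} from $M^*(0)$, and the gap between $\sum l_i(g(u_i(q)))$ and $\sum l_i(\sigma_i(q))$. Each of these contributions goes to zero as $\|q\|\to 0$, so combining the three estimates with $\|q\|$ sufficiently small will give both bullets within tolerance $\delta$.

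For the first bullet, feasibility of $u(q)$ in Problem~\hyperref[equ:perturbed_zero_order_prototype]{8} together with Assumption~\ref{assumption: uicompactness} forces $u_i(q)\in\{u: g(u)\le\rho_{\max}\}$, which is compact, so $\|u_i(q)\|$ is uniformly bounded for all sufficiently small $\|q\|$. Since $\tilde A(q)=Q^{-1}\tilde J(q)Q$ with $\tilde J(q)$ continuous in $q$ and $\tilde A(0)=A$, each power $\tilde A(q)^{i-1}$ converges to $A^{i-1}$ as $q\to 0$. The identity
\[
\bar x_{N+1}(q)-x_{N+1}(q)=\sum_{i=1}^N\bigl(A^{i-1}-\tilde A(q)^{i-1}\bigr)\,B\,u_i(q)
\]
then yields $\|\bar x_{N+1}(q)-x_{N+1}(q)\|\to 0$ uniformly, and continuity of the affine map $G$ together with $G(x_{N+1}(q))=0$ delivers $\|G(\bar x_{N+1}(q))\|\le \delta$ for $\|q\|$ small enough.

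For the second bullet, the engine is Theorem~\ref{thm: vt continuity}, which I would apply to Problem~\hyperref[equ:perturbed_zero_order_prototype]{8} with parameter $q$. Lemma~\ref{lemma: perturbation_all_stability} preserves Assumptions~\ref{assumption: uicompactness}--\ref{assumption: eta_N nonzero} for small $\|q\|$, supplying the constraint qualification required for continuity of $M^*$ at $q=0$. Because Problem~\hyperref[equ:zero_order_prototype]{4} is a relaxation of Problem~\hyperref[equ:original_LCvx]{3} via $g(u_i)\le\sigma_i$, we have $M^*(0)\le m_o^*$, hence $M^*(q)\le m_o^*+\delta/3$ for $\|q\|$ small. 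Next, feasibility yields $g(u_i(q))\le\sigma_i(q)\in[\rho_{\min},\rho_{\max}]$, and monotonicity of $l_i$ on $[\rho_{\min},\infty)$ (combined with the standard positioning of its minimizer at or below $\rho_{\min}$) gives $l_i(g(u_i(q)))\le l_i(\sigma_i(q))$ for every $i$. Finally $m$ is locally Lipschitz, so the first-paragraph trajectory estimate bounds $|m(\bar x_{N+1}(q))-m(x_{N+1}(q))|\le\delta/3$; adding the three pieces proves the second bullet.

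The hard part will be verifying the hypotheses of Theorem~\ref{thm: vt continuity} for Problem~\hyperref[equ:perturbed_zero_order_prototype]{8}, particularly the uniqueness of $\eta=0$ in the normal-cone equation at the optimal $(y^*,q_0=0)$. Normality (Assumption~\ref{assumption: eta_N nonzero}) combined with the full row rank of $\nabla H$ (Assumption~\ref{assumption: dynamics full rank}) is decisive here. A minor technicality is truncating the unbounded $x$-component of $\tilde V$ by a large compact box, chosen so as not to alter the optimum, so that the compactness hypothesis of Theorem~\ref{thm: vt continuity} is met. A secondary subtlety is ensuring $l_i(g(u_i(q)))\le l_i(\sigma_i(q))$ on grid points where $g(u_i(q))<\rho_{\min}$, which follows from convexity of $l_i$ and the assumed monotonicity on $[\rho_{\min},\infty)$ forcing its minimizer onto $(-\infty,\rho_{\min}]$, a condition satisfied in all standard fuel-like cost functions.
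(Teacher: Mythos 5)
Your proposal follows essentially the same route as the paper's proof: the same decomposition into (i) a trajectory-deviation bound from the boundedness of $u_i(q)$ and the continuity of $\tilde A(q)^{i}$ in $q$, giving the first bullet via affinity of $G$, and (ii) continuity of the perturbed optimal value via Theorem \ref{thm: vt continuity} (after a compactness truncation of $\tilde V$), combined with the relaxation bound $M^*(0)\le m^*_o$ and the monotonicity comparison $\sum_i l_i(g(u_i(q)))\le \sum_i l_i(\sigma_i(q))$, giving the second bullet. The one minor slip is in the deferred verification: the uniqueness of $\eta=0$ in the normal-cone equation required by Theorem \ref{thm: vt continuity} is supplied by Slater's condition (Assumption \ref{assumption: slater}) together with the full-row-rank condition, as in the paper's Lemma \ref{lemma: cQ}, not by normality (Assumption \ref{assumption: eta_N nonzero}).
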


\begin{proof}
See Appendix \ref{sec: Proof for perturbation of error control}.
\qed \end{proof}

\begin{remark}
The perturbation mentioned in Theorem \ref{theorem: main_theorem} is introduced solely for mathematical discussion and is \textbf{not required} in practical applications. Nevertheless, it is mathematically necessary: the numerical example in Section \ref{sec:numerical}, constructed as a counterexample, demonstrates that LCvx can become invalid at more than \(n_x - 1\) nodes if no perturbation is applied, and it indicates that \(n_x - 1\) is a tight upper bound for the number of temporal grid points at which LCvx becomes invalid when the control dimension is one. \blue{In such deliberately constructed cases, any perturbation that meets the following two requirements is sufficient: it must be small enough to avoid significantly affecting the final state (as ensured by Theorem~\ref{thm: perturbation error control}) and large enough to be detected by the optimization algorithm. In our numerical implementation, we select a perturbation magnitude that is significantly larger than the solver’s effective numerical precision, ensuring that it meaningfully influences the solution. We then rerun the convex solver for the perturbed problem to obtain the final result.}
\end{remark}
 
\blue{Our theoretical results guarantee at most $n_x - 1$ violations of control constraints, which may still be problematic in practice. To resolve this, we propose a simple correction step. For illustration, consider $g(x)=\|x\|$ (extensions are straightforward). Define the corrected control:
\[
\hat{u}_i = \begin{cases}
u_i, &\|u_i\|\geq \rho_{\min}\\[4pt]
\rho_{\min}\frac{u_i}{\|u_i\|}, &\text{otherwise}
\end{cases}
\]
and let $\{\hat{x}_i\}$ be the resulting state trajectory.}

\begin{theorem}\label{thm:violation_fixed}
\blue{Given a uniform discretization $t_f/N$, the final-state deviation satisfies:
\[
\|\hat{x}_{N+1}-x_{N+1}\|\leq (n_x-1)C_0\left(e^{\|A_c\|\frac{t_f}{N}}-1\right),
\]
where $C_0$ depends on $\{A_c,B_c,\rho_{\min}\}$. If $A_c$ is asymptotically stable, the bound tightens to:
\[
\|\hat{x}_{N+1}-x_{N+1}\|\leq C_1\frac{n_x-1}{N},
\]
where $C_1$ depends additionally on $t_f$.}
\end{theorem}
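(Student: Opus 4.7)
The plan is to express the terminal-state deviation as a telescoped sum supported only on the violated grid points, and then bound each contributing matrix $A^{N-i}B$ through the continuous-time representation in~\eqref{equ: dicrete dyanmics matrix}. Because the corrected and original trajectories share the same initial condition and the dynamics are linear, iterating $x_{i+1}=Ax_i+Bu_i$ together with $\hat{x}_{i+1}=A\hat{x}_i+B\hat{u}_i$ gives
\begin{equation}
\hat{x}_{N+1}-x_{N+1}=\sum_{i=1}^{N}A^{N-i}B(\hat{u}_i-u_i).
\end{equation}
By construction, $\hat{u}_i-u_i=0$ whenever $\|u_i\|\geq\rho_{\min}$, and on the violation set $\mathcal{V}=\{i:\|u_i\|<\rho_{\min}\}$ one has $\|\hat{u}_i-u_i\|=\rho_{\min}-\|u_i\|\leq\rho_{\min}$. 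Since the relaxed problem enforces $\|u_i\|\leq\rho_{\max}$, these are the only possible nonconvex-constraint violations, so Theorem~\ref{theorem: main_theorem} gives $|\mathcal{V}|\leq n_x-1$, and the triangle inequality yields
\begin{equation}
\|\hat{x}_{N+1}-x_{N+1}\|\leq (n_x-1)\,\rho_{\min}\,\max_{1\leq i\leq N}\|A^{N-i}B\|.
\end{equation}

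Next I would bound $\|A^{N-i}B\|$ via a change of variables in the integral form of $B$, exploiting the semigroup identity $A^{N-i}B=\int_{(N-i)t_f/N}^{(N-i+1)t_f/N}e^{A_c s}\,ds\,B_c$. For $\|A_c\|>0$, the standard estimate $\|e^{A_c s}\|\leq e^{\|A_c\|s}$ produces
\begin{equation}
\|A^{N-i}B\|\leq \frac{\|B_c\|\,e^{\|A_c\|t_f}}{\|A_c\|}\bigl(e^{\|A_c\|t_f/N}-1\bigr),
\end{equation}
and absorbing the prefactor $\rho_{\min}\|B_c\|e^{\|A_c\|t_f}/\|A_c\|$ into the constant $C_0$ gives the general bound claimed in the theorem. (The degenerate case $A_c=0$ is immediate, since then $A=I$ and $B=(t_f/N)B_c$.)

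For the asymptotically stable case, the hypothesis supplies constants $M,\alpha>0$ with $\|e^{A_c s}\|\leq Me^{-\alpha s}\leq M$ for every $s\geq 0$. Inserting this sharper estimate into the same integral representation yields the uniform bound $\|A^{N-i}B\|\leq M\|B_c\|(t_f/N)$, which, combined with the counting bound above, produces $\|\hat{x}_{N+1}-x_{N+1}\|\leq C_1(n_x-1)/N$ with $C_1=\rho_{\min}M\|B_c\|t_f$. The main technical care required is ensuring that, under asymptotic stability, the worst-case growth factor $e^{\|A_c\|t_f}$ can be replaced by a constant independent of the horizon; this is precisely why uniform boundedness of the semigroup $\{e^{A_c s}\}_{s\geq 0}$ is essential. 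Everything else is routine matrix-exponential arithmetic, since the key combinatorial fact $|\mathcal{V}|\leq n_x-1$ is already provided by Theorem~\ref{theorem: main_theorem}.
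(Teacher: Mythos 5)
Your proposal is correct and follows essentially the same route as the paper: expand the linear recursion into $\sum_i A^{N-i}B(\hat{u}_i-u_i)$, bound the at most $n_x-1$ nonzero terms using $\|\hat{u}_i-u_i\|\leq\rho_{\min}$ together with $\|\exp(A_c t)\|\leq e^{\|A_c\|t}$ for the general bound and the uniform stability bound $\|\exp(A_c t)\|\leq \beta e^{-\alpha t}$ for the sharper one. Your extra remarks (the semigroup form of $A^{N-i}B$, the $A_c=0$ degenerate case) are minor refinements of the same argument.
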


\begin{proof}
See Appendix \ref{sec: proof for violation_fixed}.
\end{proof}

\begin{remark}
\blue{Theorem~\ref{thm:violation_fixed} shows that the impact of violations decreases at a rate of $O(t_f/N)$, aligning with the continuous-time LCvx results as $N$ approaches infinity. Thus, violations become negligible for sufficiently large $N$.}

\blue{Practically, prior LCvx studies rarely encounter multiple violations, indicating that moderate discretization $N$ often suffices. Hence, one may select a sufficiently large $N$ based on theoretical bounds or adjust $N$ adaptively, starting from a moderate initial value and increasing it if violations significantly impact the results. Solutions obtained at smaller discretizations can serve as warm starts for larger problems, thereby minimizing computational cost.}
\end{remark}

\section{LCvx on Long-horizon Cases}
\label{sec: long-horizon}

In long-horizon cases, the optimal control inputs of Problem~\hyperref[equ:zero_order_prototype]{4} may violate the nonconvex constraints of Problem~\hyperref[equ:original_LCvx]{3} at all temporal grid points, i.e., \(g(u_i) < \rho_{\text{min}}\) for all \(i\). Our goal is to find an optimal solution to the \textit{continuous-time} nonconvex Problem~\hyperref[equ:unslacked_continuous]{1} that minimizes both the control cost and boundary cost. If such a solution exists, its final state \(x(t_f)\) should be the optimal solution to Problem~\hyperref[equ: boundaryonly]{7}, and the control input should satisfy \(g(u) = \rho_{\text{min}}\).

To solve the \textit{continuous-time} nonconvex Problem~\hyperref[equ:unslacked_continuous]{1}, we divide the trajectory into two phases, with \(t_s\) as the switching point. The first phase is defined on the interval \([0, t_s]\). In this phase, we set the control input \(u: [0, t_s] \rightarrow \mathbb{R}^{n_u}\) to be constant, ensuring that \(g(u) = \rho_{\text{min}}\), while the state trajectory follows the continuous dynamics described in Problem~\hyperref[equ:unslacked_continuous]{1}. The final state of the first phase then serves as the initial state for the second phase. In the second phase, we formulate a discrete-time convex optimization problem by discretizing the continuous dynamics over \([t_s, t_f]\). The boundary state constraint is maintained, and the convexified control constraints are evaluated at temporal grid points.

Determining the transition time $t_s$ is challenging, as it must be large enough for the second phase optimization problem to belong to the normal cases, yet small enough to reduce the control and final state costs. Hence, we adopt the bisection search method to determine $t_s$. We will demonstrate that the optimal total cost of the first and second phases is a continuous function of $t_s$, which justifies the use of the bisection search method. An optimal $t_s$ should minimize the total cost while ensuring that the discrete convex \ac{LCvx} problem in the second phase is normal.

As explored in Section \ref{sec: normal cases}, solving the discrete \ac{LCvx} problem does not ensure \ac{LCvx} validation at all temporal grid points. Thus, we aim to find a control sequence such that the constraint $\rho_{\text{min}} + \epsilon \geq g(u_i) \geq \rho_{\text{min}}$ for an arbitrarily small fixed $\epsilon > 0$, with violations at no more than $n_x - 1$ grid points after the perturbation described in Theorem \ref{theorem: main_theorem}. Additionally, $x_{N+1}$ should be an optimal solution to Problem~\hyperref[equ: boundaryonly]{7} up to any arbitrarily small fixed constant $\epsilon$. The mathematical definition for this statement is available in Theorem \ref{thm: over_relaxed main}.

Now, we rigorously define the bisection search method. In the first phase, we fix the control input as a constant $u_s \in \MR^{n_u}$ over a duration $t_s \in \MR$, with the initial condition $x_{s} \in \MR^{n_x}$ provided in the original problem. The control $u_s$ is chosen to ensure $g(u_s) = \rho_{\text{min}}$. The continuous dynamics matrices are denoted as $A_c$ and $B_c$. By plugging in $u_s$ into the continuous dynamics, we determine the final state of the first phase, which also serves as the initial condition for the second phase:
\begin{equation}
\label{equ: initial condition of the first phase}
    x_{\text{init}}(t_s) = \exp \left(A_c  t_s\right) x_s + \int_0^{t_s} \exp \left(A_c \tau\right) d \tau B_c u_s.
\end{equation} 

For the second phase, we define a discrete convexified optimization Problem~\hyperref[equ:zero_order_prototype]{4}, keeping the dynamics discretized as $N$ pieces with zero-order hold control. The discrete dynamics matrices are expressed as:
\begin{align}
A(t_s) &= \exp \left(A_c \frac{t_f-t_s}{N}\right), \\
B(t_s) &= \int_0^{{(t_f-t_s)}/{N}} \exp \left(A_c \tau\right) d \tau B_c,
\end{align}
where $t_f-t_s$ is the amount of time remaining for the second phase.

Furthermore, we modify the objective function of Problem~\hyperref[equ:zero_order_prototype]{4} by replacing \( l_i(\sigma_i) \) with \( l(\sigma_i) \times (t_f - t_s) / N \), where \( l \) is chosen such that its derivative on the domain of \( \sigma \) is lower-bounded by some \( L_l > 0 \). This adjustment simplifies the mathematical derivation. Theorem \ref{thm: over_relaxed main} justifies this modification, ensuring that the quality of the resulting trajectory and control variables remains unaffected.

The optimization problem for the second phase is then formulated as:
\newline
\textbf{Problem 9}:
\phantomsection
\label{equ:LCvxallmissed}
\begin{equation}
\begin{aligned}
& \underset{x, u, \sigma}{\min}
& & m(x_{N+1}) + \sum_{i = 1}^N  l(\sigma_i) \frac{t_f-t_s}{N} + t_s l(\rho_{\text{min}}) \\
&  \operatorname{s.t.} \quad 
& & x_{i+1} = A(t_s) x_{i} + B(t_s) u_i, \text{ for } i \in \intset 1 N . \\
&&& \rho_{\min } \leq \sigma_i \leq \rho_{\max },\ g\left(u_i\right) \leq \sigma_i,
\text{ for } i \in \intset 1 N . \\
&&& x_1 = x_{\text{init}}(t_s), \quad G(x_{N+1}) = 0. 
\end{aligned}
\end{equation}
Here, we introduce a constant term \( t_s l(\rho_{\text{min}}) \) to the objective function of the aforementioned optimization problem to account for the cost of the first phase. This addition simplifies the proof while leaving the optimal solution of the second phase unaffected. The value of the optimal objective function, denoted by \( v(t_s) \), depends on \( t_s \) and represents the total cost of the flight, where the minimum value of \( v(t_s) \) is \( m^* + t_f l(\rho_{\text{min}}) \), with \( m^* \) being the optimal value of Problem~\hyperref[equ: boundaryonly]{7}. Our primary objective is to determine a \( t_s \) such that \( v(t_s) < m^* + t_f l(\rho_{\text{min}}) + \epsilon \), for a given and arbitrarily small \( \epsilon > 0 \). Additionally, to ensure the applicability of the theorems from the previous section, we require that Problem~\hyperref[equ:LCvxallmissed]{9} satisfies the normal case condition.

To simplify the notation for Problem~\hyperref[equ:LCvxallmissed]{9}, we define $z = (x, u, \sigma)$, the object function as $M(z, t_s)$, and all affine equalities as $F(z, t_s) = 0$. The remaining inequality constraints are identical to the set $\tilde V$ in  Problem~\hyperref[equ:simplified_zero_order_hold]{5}. Thus, our problem can be represented as:
\newline
\textbf{Problem 10}:
\phantomsection
\label{equ:long-horizon_t}
\begin{equation}
\begin{aligned}
 \min_z \quad& M\left(z , t_s\right) \\
\operatorname{s.t.} \quad & F\left(z, t_s\right)=0, \quad z \in \tilde V.
\end{aligned}
\end{equation}
The optimal value is a function of $t_s$, denoted as $v(t_s)$. 

We require the following assumption for Problem~\hyperref[equ:long-horizon_t]{10}.

\begin{assumption}
\label{assumption: valuefun}
Problem~\hyperref[equ:zero_order_prototype]{4} belongs to the long-horizon case. Meanwhile, there exists a $t_b$ such that $v(t_b)>m^* + t_f l(\rho_{\text{min}})$. Assumptions \ref{assumption: uicompactness}, \ref{assumption:controllability}, \ref{assumption: dynamics full rank}, and \ref{assumption: slater} are valid for Problem~\hyperref[equ:long-horizon_t]{10} generated by $t_s \in [0, t_b]$. 
\end{assumption}


\begin{lemma}
 \label{lemma: second eta neq 0} 
Assumption \ref{assumption: valuefun} ensures that  Problem~\hyperref[equ:long-horizon_t]{10} generated by \(t_b\) has a non-zero \(\eta_N^*\).
 
\end{lemma}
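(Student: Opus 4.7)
The plan is to prove the contrapositive: if $\eta_N^* = 0$ for the problem generated by $t_b$, then $v(t_b) = m^* + t_f\, l(\rho_{\min})$, contradicting Assumption \ref{assumption: valuefun}.

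First, I would note that Assumption \ref{assumption: valuefun} makes Assumptions \ref{assumption: uicompactness}, \ref{assumption:controllability}, \ref{assumption: dynamics full rank} and \ref{assumption: slater} available for Problem~\hyperref[equ:long-horizon_t]{10} at $t_s = t_b$, so the entire Lagrangian machinery developed in Section~\ref{sec: Mechanism of LCvx} transfers verbatim to this problem; in particular Lemma~\ref{thm: eta_n is zero} applies. Thus, if one assumes for contradiction that $\eta_N^* = 0$ at the solution of Problem~\hyperref[equ:long-horizon_t]{10} with $t_s = t_b$, then $\sigma_i^* = \rho_{\min}$ for every $i \in \intset{1}{N}$ and the terminal state $x_{N+1}^*$ must solve the boundary-only Problem~\hyperref[equ: boundaryonly]{7}, yielding $m(x_{N+1}^*) = m^*$.

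Second, I would substitute these values into the objective of Problem~\hyperref[equ:LCvxallmissed]{9}:
\begin{equation*}
v(t_b) = m(x_{N+1}^*) + \sum_{i=1}^{N} l(\sigma_i^*)\,\frac{t_f - t_b}{N} + t_b\, l(\rho_{\min}).
\end{equation*}
Plugging in $m(x_{N+1}^*) = m^*$ and $\sigma_i^* = \rho_{\min}$, the $N$ identical terms in the sum collapse to $(t_f - t_b)\, l(\rho_{\min})$, and combining with the constant term $t_b\, l(\rho_{\min})$ gives $v(t_b) = m^* + t_f\, l(\rho_{\min})$.

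Finally, this equality contradicts the hypothesis $v(t_b) > m^* + t_f\, l(\rho_{\min})$ from Assumption \ref{assumption: valuefun}, so $\eta_N^*$ cannot vanish. There is no genuine obstacle here: the whole argument is a direct reuse of Lemma~\ref{thm: eta_n is zero} combined with an arithmetic simplification of the modified objective. The only minor care required is making sure that the per-stage cost $l(\sigma_i)\,(t_f - t_b)/N$ — with the new time-scaling — still satisfies the monotonicity hypothesis of Assumption~\ref{assumption: uicompactness}, which it does since $(t_f-t_b)/N > 0$ is a positive constant multiplier and $l$ itself is monotone increasing on $[\rho_{\min},\infty)$ by construction.
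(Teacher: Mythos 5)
Your proposal is correct and is essentially the paper's own argument: the paper also derives the result as the contrapositive of Lemma~\ref{thm: eta_n is zero}, noting that $v(t_b) > m^* + t_f\,l(\rho_{\min})$ forces either some $\sigma_i^* > \rho_{\min}$ or $x_{N+1}$ not solving Problem~\hyperref[equ: boundaryonly]{7}. Your version merely makes explicit the arithmetic collapse of the objective to $m^* + t_f\,l(\rho_{\min})$ and the check that the rescaled stage cost still satisfies Assumption~\ref{assumption: uicompactness}, both of which the paper leaves implicit.
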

\begin{proof}
The condition \(v(t_b) > m^* + t_f l(\rho_{\text{min}})\) ensures that either \(x_{N+1}\) is not the solution to Problem~\hyperref[equ: boundaryonly]{7}, or at least one \(\sigma_i^*\) exceeds \(\rho_{\text{min}}\). In both cases, \(\eta_N^* \neq 0\) follows by the contrapositive of Lemma \ref{thm: eta_n is zero}.\qed
\end{proof}

Consequently, we have the following continuity result:
\begin{lemma}
Assumption \ref{assumption: valuefun} ensures that $v(t)$ is continuous over $(0, t_b]$.  
\end{lemma}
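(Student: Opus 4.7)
The plan is to apply Theorem \ref{thm: vt continuity} to Problem \hyperref[equ:long-horizon_t]{10} at each point $t \in (0, t_b]$, with the identification $y = z$, $p = t_s$, $\Omega = \tilde V$, $M = M(z,t_s)$, and $F = F(z,t_s)$. The four hypotheses to verify are continuity of $F$ and $\nabla_z F$ in $(z, t_s)$, local Lipschitz continuity of $M$, compactness of $\Omega$, and the uniqueness condition that $\eta = 0$ is the only solution of $0 \in \nabla_z F(z^*, t)^\top \eta + N_{\tilde V}(z^*)$.

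For the smoothness hypotheses, the discrete matrices $A(t_s)$, $B(t_s)$ and the shifted initial state $x_{\text{init}}(t_s)$ are $C^1$ functions of $t_s$, since they are built from matrix exponentials and their integrals. Hence $F$ is affine in $z$ with $C^1$ coefficients in $t_s$, making both $F$ and $\nabla_z F$ continuous in $(z, t_s)$. The objective $M$ is $C^1$ in $(z, t_s)$ by Assumption \ref{assumption: uicompactness} together with its explicit expression, and therefore locally Lipschitz.

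The main obstacle is that $\tilde V$ as defined in Problem \hyperref[equ:simplified_zero_order_hold]{5} is unbounded in the state component $x$, so Theorem \ref{thm: vt continuity} does not apply verbatim. The resolution uses the fact that $V$ is compact by Assumption \ref{assumption: uicompactness}; combined with $t_s \in [0, t_b]$ and the discrete recursion $x_{i+1} = A(t_s) x_i + B(t_s) u_i$, $x_1 = x_{\text{init}}(t_s)$, this yields a uniform bound $\|x_i\|_\infty \leq R$ valid for every feasible trajectory generated by any $t_s \in [0, t_b]$. Replacing $\tilde V$ with $\tilde V' := \tilde V \cap \{\|x\|_\infty \leq R+1\}$ produces a compact convex set that contains all feasible points and leaves the optimizer strictly interior in the $x$-component, so the optimum of Problem \hyperref[equ:long-horizon_t]{10} is unchanged and $N_{\tilde V'}(z^*) = N_{\tilde V}(z^*)$ at any optimizer.

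For the constraint qualification, fix $t \in (0, t_b]$, let $z^*$ be an optimizer, and let $\hat z \in \operatorname{int}(\tilde V)$ be a Slater point with $F(\hat z, t) = 0$, guaranteed by Assumption \ref{assumption: slater}. Suppose $0 = \nabla_z F(z^*, t)^\top \eta + v$ with $v \in N_{\tilde V}(z^*)$. Since $F(\cdot, t)$ is affine, $\nabla_z F(\hat z - z^*) = F(\hat z, t) - F(z^*, t) = 0$, and therefore $v^\top(\hat z - z^*) = -\eta^\top \nabla_z F (\hat z - z^*) = 0$. Because $\hat z$ is interior, for every unit vector $w$ and all sufficiently small $\epsilon > 0$ the point $\hat z + \epsilon w$ lies in $\tilde V$, giving $\epsilon v^\top w = v^\top(\hat z + \epsilon w - z^*) \leq 0$; since $w$ is arbitrary, this forces $v = 0$. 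Full row rank of $\nabla_z F$ (Assumption \ref{assumption: dynamics full rank}) then yields $\eta = 0$. All hypotheses of Theorem \ref{thm: vt continuity} thus hold at each $t \in (0, t_b]$, which establishes continuity of $v$ at every such $t$.
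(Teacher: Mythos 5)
Your proposal is correct and takes essentially the same route as the paper: both apply Theorem \ref{thm: vt continuity} at an optimizer for each $t$, verify the normal-cone uniqueness condition via the Slater point and full row rank of the affine constraint Jacobian (your inline argument is exactly the content of Lemma \ref{lemma: cQ}), and compactify $\tilde V$ before invoking the theorem. Your treatment of compactness—bounding the state component via the recursion and intersecting $\tilde V$ with an inactive box so the normal cones are unchanged—is a more careful rendering of the step the paper handles by reference to Lemma \ref{lemma: perturbation_normality_stability}.
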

\begin{proof}
Assumption \ref{assumption: valuefun} guarantees that, for each $t$, solutions exist for Problem~\hyperref[equ:long-horizon_t]{10}. For each $t$, we select one solution and denote it as $z^*(t)$. Consequently, we can apply Theorem \ref{thm: vt continuity} to Problem~\hyperref[equ:long-horizon_t]{10} at the point $(z^*(t), t)$ for all $t$, with the prerequisites of Theorem \ref{thm: vt continuity} being met by Assumption \ref{assumption: valuefun} and Lemma \ref{lemma: cQ} in the Appendix. The compactness in Theorem \ref{thm: vt continuity} is satisfied by the same method used in Lemma \ref{lemma: perturbation_normality_stability} in the Appendix..
\qed \end{proof}

Since $v(t)$ is a continuous function of $t$, we can apply the bisection search method to find the transition point. Under Assumption \ref{assumption: valuefun}, $v(0) = m^* + t_f l(\rho_{\text{min}}),$ and $v(t_b)> m^* + t_f l(\rho_{\text{min}}),$  where \( m^* \) is the optimal value of Problem~\hyperref[equ: boundaryonly]{7}.  We let $t_{\text{low}} = 0$ and $t_{\text{high}} = t_b$. In each iteration, we choose $t_{\text{mid}} = (t_{\text{low}} + t_{\text{high}} )/2$. If $v(t_{\text{mid}}) > m^* + t_f l(\rho_{\text{min}}),$ we replace $t_{\text{high}}$ with $t_{\text{mid}}.$ Otherwise, we replace $t_{\text{low}}$ with $t_{\text{mid}}.$ The following theorem is a direct consequence of a standard bisection search method argument, the fact that $v(t)$ is continuous and Lemma \ref{lemma: second eta neq 0}.

\begin{theorem}
\label{thm: bisection search method}
Under Assumption \ref{assumption: valuefun}, for any $\epsilon > 0$, the bisection search method finds a $t_s^* \in (0, t_b)$ ensuring the following inequality within finite iterations:
\begin{equation}
m^* + t_f l(\rho_{\text{min}}) < v(t_s^*) < m^* + t_f l(\rho_{\text{min}}) + \frac{\epsilon}{2},
\end{equation}
where \( m^* \) is the optimal value of Problem~\hyperref[equ: boundaryonly]{7}. 
Meanwhile, the Problem~\hyperref[equ:LCvxallmissed]{9} generated by $t_s^*$ is normal.
\end{theorem}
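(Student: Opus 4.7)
The plan is to combine the continuity of $v$ established just above with a standard bisection invariant, and then read off normality from the definition of the long-horizon/normal dichotomy. First I would observe that $v$ is bounded below by $m^* + t_f\, l(\rho_{\min})$ on the entire search interval. Indeed, for any $t_s$, any feasible trajectory of Problem~\hyperref[equ:LCvxallmissed]{9} satisfies $m(x_{N+1})\geq m^*$ (since $G(x_{N+1})=0$) and $l(\sigma_i)\geq l(\rho_{\min})$ (since $l$ is monotone increasing and $\sigma_i\geq \rho_{\min}$), so the sum plus the constant term $t_s l(\rho_{\min})$ collapses to at least $m^* + t_f l(\rho_{\min})$. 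Combined with the Assumption~\ref{assumption: valuefun} endpoints $v(0)=m^* + t_f l(\rho_{\min})$ and $v(t_b) > m^* + t_f l(\rho_{\min})$, this sets up a clean bracket.

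Next I would track the invariant maintained by the bisection. Writing $v_0 := m^* + t_f l(\rho_{\min})$, I claim that throughout the procedure $v(t_{\text{low}}) = v_0$ and $v(t_{\text{high}}) > v_0$. This is true at initialization by the lower bound above and by Assumption~\ref{assumption: valuefun}. If in one iteration $v(t_{\text{mid}})>v_0$, we update $t_{\text{high}}\leftarrow t_{\text{mid}}$ and the invariant is preserved trivially; otherwise $v(t_{\text{mid}})\leq v_0$ forces $v(t_{\text{mid}})=v_0$ by the lower bound, and updating $t_{\text{low}}\leftarrow t_{\text{mid}}$ preserves the other half. Since $t_{\text{high}}-t_{\text{low}}$ halves each step, both sequences converge to a common limit $t_* \in (0,t_b]$, and by continuity of $v$ on $(0,t_b]$ (proved in the preceding lemma) we get $v(t_{\text{high}})\to v(t_*)=\lim v(t_{\text{low}})=v_0$. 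Hence for any $\epsilon>0$, after finitely many iterations we can set $t_s^* := t_{\text{high}}$ and obtain
\begin{equation*}
v_0 < v(t_s^*) < v_0 + \tfrac{\epsilon}{2},
\end{equation*}
which is the first claim.

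Finally, for the normality of Problem~\hyperref[equ:LCvxallmissed]{9} at $t_s^*$, I would argue by contraposition using Lemma~\ref{thm: eta_n is zero} applied to Problem~\hyperref[equ:LCvxallmissed]{9}. If $(x^*,u^*,\sigma^*)$ were such that $x^*_{N+1}$ solves Problem~\hyperref[equ: boundaryonly]{7} and $\sigma_i^*=\rho_{\min}$ for all $i$, then the objective would evaluate to exactly $m^* + t_f l(\rho_{\min})=v_0$, contradicting $v(t_s^*)>v_0$. Therefore at least one of these conditions fails at $t_s^*$, which is precisely the definition of the normal case, and Lemma~\ref{lemma: second eta neq 0} (or equivalently the contrapositive of Lemma~\ref{thm: eta_n is zero}) ensures $\eta_N^*\neq 0$ for the associated dual, as needed.

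The only subtle point — and the one I would be most careful about — is the invariant $v(t_{\text{low}})=v_0$ throughout the procedure, because it relies on combining the lower bound with the update rule in the case $v(t_{\text{mid}})\leq v_0$; once that is pinned down, the bisection convergence and the normality conclusion are immediate from the already-proved continuity of $v$ and the definition of the normal case.
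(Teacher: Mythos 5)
Your proof follows essentially the same route as the paper, which itself only asserts that the theorem is ``a direct consequence of a standard bisection search argument, the continuity of $v$, and Lemma~\ref{lemma: second eta neq 0}''; your lower bound $v(t_s)\geq m^* + t_f\,l(\rho_{\min})$, the bisection invariant, and the contrapositive-of-Lemma~\ref{thm: eta_n is zero} argument for normality are exactly the intended ingredients, and the normality step coincides with the paper's Lemma~\ref{lemma: second eta neq 0} applied at $t_s^*$ rather than $t_b$.

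The one step you assert without justification is that the common limit satisfies $t_*\in(0,t_b]$. If $v(t)>v_0$ for \emph{every} $t\in(0,t_b]$, then each midpoint triggers the update $t_{\text{high}}\leftarrow t_{\text{mid}}$, so $t_{\text{low}}$ remains $0$ forever and $t_*=0$; in that case your appeal to continuity of $v$ on $(0,t_b]$ does not by itself give $v(t_{\text{high}})\to v_0$, since what is needed is right-continuity of $v$ at $0$. This is easily repaired --- Assumption~\ref{assumption: valuefun} requires Assumptions~\ref{assumption: uicompactness}--\ref{assumption: slater} for all $t_s\in[0,t_b]$ including $t_s=0$, so the same application of Theorem~\ref{thm: vt continuity} and Lemma~\ref{lemma: cQ} yields continuity at $0$ as well, and $v(0)=v_0$ then closes the argument --- but as written the case $t_*=0$ is the only point where your proof could fail, and it is worth a sentence. (Note the paper glosses over the same point.)
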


\begin{remark}
\blue{The complexity of the bisection search is logarithmic: given horizon \(T_f\) and precision \(\Delta t_{\min}\), the maximum number of iterations is \(N = \log_2(T_f/\Delta t_{\min})\). For example, \(T_f=10\,\text{min}\) and \(\Delta t_{\min}=1\,\text{s}\) yield \(N\approx 10\). In practice, the computational burden is often lower than expected for several reasons: (i) long-horizon cases typically occur in early stages, where the problem size and complexity are lower; (ii) early termination is possible once a satisfactory solution is found; and (iii) warm-starting from previous solutions is available in the MPC framework.}
\end{remark}

Since the Problem~\hyperref[equ:LCvxallmissed]{9} generated by $t_s^*$ is normal, we can now add the perturbation and apply the results from Theorem \ref{theorem: main_theorem} and \ref{thm: perturbation error control}. As tuning the switch point changes the horizon and discretization ofProblem~\hyperref[equ:LCvxallmissed]{9} , we cannot directly compare the solution ofProblem~\hyperref[equ:LCvxallmissed]{9}  to that of the discrete Problem~\hyperref[equ:original_LCvx]{3}. Therefore, we directly compare the result ofProblem~\hyperref[equ:LCvxallmissed]{9}  to that of the continuous-time Problem~\hyperref[equ:unslacked_continuous]{1}. Note that the best possible solution for Problem~\hyperref[equ:unslacked_continuous]{1} is the one with control $g(u(t)) = \rho_{\text{min}}$ for all $t$ and $m(x(t_f))$ equal to $m^*$, where $m^*$ is the optimal value of Problem~\hyperref[equ: boundaryonly]{7}. The next theorem shows that the solution ofProblem~\hyperref[equ:LCvxallmissed]{9}  with perturbation achieves $m(x(t_f))$ almost equal to $m^*$, and $g(u_i)$ less than $\rho_{\text{min}}$ up to an arbitrarily small $\epsilon>0.$

To do this, we need a few notations consistent with those of Theorem \ref{theorem: main_theorem}. Consider the Problem~\hyperref[equ:LCvxallmissed]{9} generated from $t_s^*$. We add a perturbation $q \in \MR^d$ to the dynamics matrix $A(t_s^*)$ and denote the perturbated matrix as $\tilde A(t_s^*, q)$. The optimal solution of the resulting perturbed optimization problem is denoted as $(x(q), u(q), \sigma(q))$. If we plug in $u(q)$ into the original dynamics, we have
\begin{equation}
\begin{aligned}
\tilde{x}_{N+1}(q) & =x_1+\sum_{i=1}^N A\left(t_s^*\right)^{i-1} B\left(t_s^*\right) u_i(q) \\
& =x_1+\sum_{i=1}^N \tilde{A}^{i-1}\left(t_s^*, 0\right) B\left(t_s^*\right) u_i(q)
\end{aligned}
\end{equation}
Here, $\tilde{x}_{N+1}(q)$ is denoted to be the endpoint of the propagated trajectory if we apply the control acquired from the perturbed problem to the original dynamics.

\begin{theorem}
\label{thm: over_relaxed main}
Assume Assumption \ref{assumption: valuefun} is valid, Choose a function $\Rfunc{l}{}{}$ for Problem~\hyperref[equ:LCvxallmissed]{9} such that the derivative of $l$ is larger than some $L_l>0$ within the domain of $\sigma$. Then, for any $\epsilon > 0$, we can acquire $t_s^* \in \MR$ and $\delta > 0$ such that for any perturbation $q\in \MR^d$ with $\|q\| < \delta$ applied to Problem~\hyperref[equ:LCvxallmissed]{9} generated by $t_s^*$, the control $u(q)$ and the propagated trajectory $\tilde{x}_{N+1}(q)$ satisfies:
\begin{itemize}
  \item $\|G(\tilde{x}_{N+1}(q))\| \leq \delta$,
  \item $g(u_i) \geq \rho_{\text{min}}$ for all $i$ up to $n_x-1$ points with probability one.
  \item $\rho_{\text{min}} + \epsilon > g(u_i)$ for all $i$,
  \item $m^* + \epsilon > m(\tilde{x}_{N+1}(q))$,
\end{itemize}
where $m^*$ is the solution to Problem~\hyperref[equ: boundaryonly]{7}.
\end{theorem}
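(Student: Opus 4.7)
My plan is to chain together the bisection of Theorem~\ref{thm: bisection search method}, the normal-case perturbation result of Theorem~\ref{theorem: main_theorem}, and the trajectory/cost tracking estimate of Theorem~\ref{thm: perturbation error control}, using the hypothesis $l'\geq L_l>0$ to promote a single scalar bound on the value function into pointwise bounds on each $\sigma_i$.

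First I would fix an auxiliary tolerance $\tilde\epsilon>0$ (to be chosen later in terms of $\epsilon$) and apply Theorem~\ref{thm: bisection search method} with $2\tilde\epsilon$ in place of $\epsilon$, producing $t_s^*\in(0,t_b)$ such that Problem~\hyperref[equ:LCvxallmissed]{9} at $t_s^*$ is normal and
\begin{equation*}
m^* + t_f l(\rho_{\min}) < v(t_s^*) < m^* + t_f l(\rho_{\min}) + \tilde\epsilon.
\end{equation*}
With $(x^*,u^*,\sigma^*)$ the optimal solution, the identity $v(t_s^*)=m(x^*_{N+1})+\sum_i l(\sigma_i^*)(t_f-t_s^*)/N+t_s^* l(\rho_{\min})$, combined with $m(x^*_{N+1})\geq m^*$ (from $G(x^*_{N+1})=0$) and $l(\sigma_i^*)\geq l(\rho_{\min})$, yields the nonnegative splitting
\begin{equation*}
(m(x^*_{N+1})-m^*) + \tfrac{t_f-t_s^*}{N}\sum_i(l(\sigma_i^*)-l(\rho_{\min})) < \tilde\epsilon.
\end{equation*}
Invoking $l'\geq L_l$ gives $L_l(\sigma_j^*-\rho_{\min})(t_f-t_s^*)/N<\tilde\epsilon$ for every $j$, so using $t_s^*<t_b<t_f$ I obtain $\sigma_j^*-\rho_{\min}<N\tilde\epsilon/(L_l(t_f-t_b))$ and $m(x^*_{N+1})-m^*<\tilde\epsilon$. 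Choosing $\tilde\epsilon$ so that both quantities are below $\epsilon/2$ secures the unperturbed bounds.

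Next, since Problem~\hyperref[equ:LCvxallmissed]{9} at $t_s^*$ is normal, Theorem~\ref{theorem: main_theorem} provides $\delta_1>0$ such that for $q$ uniform on $\delta_1\mathbb{D}$ the perturbed optimal control $u(q)$ violates $g(u_i(q))\geq\rho_{\min}$ at no more than $n_x-1$ grid points almost surely, delivering property~2. Theorem~\ref{thm: perturbation error control} applied to the same problem furnishes $\delta_2>0$ with $\|G(\tilde x_{N+1}(q))\|\leq\epsilon$ whenever $\|q\|<\delta_2$, which is property~1. For properties~3 and 4, Lemma~\ref{lemma: perturbation_all_stability} transfers the hypotheses to the perturbed problem, so Theorem~\ref{thm: vt continuity} yields continuity of the perturbed optimum value $v_q$ at $q=0$; repeating the splitting of the previous paragraph on $v_q$ in place of $v(t_s^*)$ upgrades to $\sigma_i(q)<\rho_{\min}+\epsilon$ once $|v_q-v(t_s^*)|$ is sufficiently small. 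Moreover, since $\tilde A(t_s^*,q)\to A(t_s^*)$ as $q\to 0$ and the controls $u(q)$ remain uniformly bounded by Assumption~\ref{assumption: uicompactness}, the propagated final state $\tilde x_{N+1}(q)$ converges to $x^*_{N+1}$, so continuity of $m$ yields $m(\tilde x_{N+1}(q))<m^*+\epsilon$. These continuity-based conclusions jointly determine a $\delta_3>0$; taking $\delta=\min(\delta_1,\delta_2,\delta_3)$ makes all four properties hold on $\{\|q\|<\delta\}$, an event of probability one under the uniform distribution on $\delta\mathbb{D}$.

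The main obstacle is the quantitative bridge in the first step: turning the single scalar gap $v(t_s^*)-m^*-t_f l(\rho_{\min})<\tilde\epsilon$ into a uniform bound on each $\sigma_j^*$. It is this step that forces the hypothesis $l'\geq L_l>0$ and dictates the scaling $\tilde\epsilon\lesssim L_l(t_f-t_b)\epsilon/N$. A secondary technical point is reconciling the probabilistic conclusion of Theorem~\ref{theorem: main_theorem} with the deterministic smallness of $q$ required by Theorems~\ref{thm: perturbation error control} and \ref{thm: vt continuity}, but this is immediate since the deterministic bounds hold for every $q$ in a neighborhood of $0$ while the exceptional set of Theorem~\ref{theorem: main_theorem} has probability zero.
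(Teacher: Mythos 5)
Your proposal follows essentially the same route as the paper: use the bisection theorem to pin down $t_s^*$ with a small value-function gap, split that gap into a terminal-cost excess and a $\sigma$-excess (each nonnegative, hence each individually small), use $l'\geq L_l$ to convert the $\sigma$-excess into a pointwise bound $\sigma_j^*-\rho_{\min}\leq N\tilde\epsilon/(L_l(t_f-t_s^*))$, transfer everything to the perturbed problem via the value-function continuity of Theorem~\ref{thm: vt continuity} (through Lemma~\ref{lemma: perturbation_normality_stability}), and invoke Theorems~\ref{theorem: main_theorem} and~\ref{thm: perturbation error control} for the violation count and the $G$-constraint. The splitting step and the role of $L_l$ are exactly as in the paper, and your reconciliation of the probability-one statement with the deterministic neighborhood bounds is also how the paper implicitly handles it.

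There is one step that would fail as written: for the fourth property you assert that $\tilde x_{N+1}(q)$ converges to $x^*_{N+1}$ because $\tilde A(t_s^*,q)\to A(t_s^*)$ and the controls are bounded. Boundedness of $u(q)$ and convergence of the dynamics matrices only give $\tilde x_{N+1}(q)-x_{N+1}(q)\to 0$; they say nothing about $x_{N+1}(q)\to x^*_{N+1}$, which is a continuity statement about the \emph{argmin} of the perturbed problem, not about its optimal value. Theorem~\ref{thm: vt continuity} gives only value continuity, and the optimal solution need not be unique or continuous in $q$. The repair is immediate from pieces you already have: the splitting applied to the perturbed value function yields $m(x_{N+1}(q))\leq m^*+\eta$ directly (since $G(x_{N+1}(q))=0$ forces $m(x_{N+1}(q))\geq m^*$ and $l(\sigma_i(q))\geq l(\rho_{\min})$), and the Lipschitz estimate inside Theorem~\ref{thm: perturbation error control} gives $|m(\tilde x_{N+1}(q))-m(x_{N+1}(q))|\leq C\|q\|$; combining the two is how the paper concludes $m(\tilde x_{N+1}(q))<m^*+\epsilon$ without ever needing convergence of the optimizer itself.
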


\begin{proof}
See Appendix \ref{sec: proof for over_relax}
\qed \end{proof}

Theorem \ref{thm: over_relaxed main} guarantees the validity of adding perturbation to the dynamics. Consequently, we introduce Algorithm \ref{algo}, which is capable of handling both normal and long-horizon cases. This algorithm provides a control sequence that at most violates the original nonconvex constraints at $n_x - 1$ grid points. In practical applications, the perturbation step is likely to remain deactivated since it serves primarily for theoretical completeness and is activated only in artificial edge cases.

\begin{algorithm}
\caption{LCvx Method with Bisection for the Long Horizon Cases}
\label{algo}
\begin{algorithmic}[1]
\REQUIRE  $A_c$, $B_c$,$A$,$B$, $u_s$, $x_s$, $\rho_{\text{min}}$, $\epsilon_t > 0$, $\epsilon_{q}>0$
\ENSURE Optimized time $t_s$, state $x$, control $u$, and slack variables $\sigma$
\STATE Set $t_{\text{low}} = 0$, $t_{\text{high}} = t_f$, $t_s^* = 0$, $x_{\text{init}} = x_s$
\STATE Solve Problem~\hyperref[equ:zero_order_prototype]{4} to obtain $x$, $u$, $\sigma$
\IF{$\sigma_i > \rho_{\text{min}}$ for some $i$ or $x_{N+1}$ doesn't solve Problem~\hyperref[equ: boundaryonly]{7}}
\IF{the number of nodes that \ac{LCvx} being invalid $\leq n_x -1$ }
\RETURN $t_{s}^*$, $x$, $u$, $\sigma$

\ELSE
\STATE Perturb the optimization Problem~\hyperref[equ:zero_order_prototype]{4} with randomly generated $\|q\| \leq \epsilon_{q}$ \STATE Solve the perturbated Problem~\hyperref[equ:zero_order_prototype]{4} to obtain $x$, $u$, $\sigma$
\RETURN $t_{s}^*$, $x$, $u$, $\sigma$
\ENDIF
\ENDIF
\STATE Modify the object function $l_i$ to $l(\sigma_i) \times (t_f-t_s)/N$ that satisfies assumption in Theorem \ref{thm: over_relaxed main}
\STATE Set $t_s = (t_{\text{low}} + t_{\text{high}})/2$
\WHILE{$(t_{\text{high}} - t_{\text{low}}) > \epsilon_t$}
\STATE Solve Problem~\hyperref[equ:LCvxallmissed]{9} defined by $t_s$ and $u_s$ to obtain $x$, $u$, $\sigma$
\IF{$\sigma_i > \rho_{\text{min}}$ for some $i$ or $x_{N+1}$ doesn't solve Problem~\hyperref[equ: boundaryonly]{7}}
\STATE Set $t_{\text{low}}, t_{\text{high}} = t_{\text{low}}, t_s$
\ELSE
\STATE Set $t_{\text{low}}, t_{\text{high}} = t_s, t_{\text{high}}$
\ENDIF
\STATE $t_s = (t_{\text{low}}+ t_{\text{high}})/2$ 
\ENDWHILE
\STATE $t_s^* = t_{\text{high}}$

\STATE Solve Problem~\hyperref[equ:LCvxallmissed]{9} defined by $t_s^*$ to obtain $x$, $u$, $\sigma$.
\IF{the number of nodes that \ac{LCvx} being invalid $\leq n_x -1$ }
\RETURN $t_{s}^*$, $x$, $u$, $\sigma$

\ELSE
\STATE Perturb the optimization Problem~\hyperref[equ:LCvxallmissed]{9}  by $\|q\| \leq \epsilon_{q}$ to obtain $x,\ u,\ \sigma$
\RETURN $t_{s}^*$, $x$, $u$, $\sigma$
\ENDIF

\end{algorithmic}
\end{algorithm}

\section{Numerical Examples}
\label{sec:numerical}
In this section, we present three illustrative numerical examples. The first and third examples simulate Moon landing scenarios for the normal and long-horizon cases, respectively. The second example is an artificial case designed to demonstrate the necessity of perturbation and the tightness of the \( n_x - 1 \) bound in Theorem \ref{theorem: main_theorem}. All computations are performed using CVXPY \cite{diamond2016cvxpy}.

One can verify that Assumptions \ref{assumption: uicompactness}, \ref{assumption:controllability}, \ref{assumption: dynamics full rank}, \ref{assumption: slater}, and \ref{assumption: eta_N nonzero} hold for the first and second examples. However, directly testing Slater's Condition in Assumption \ref{assumption: valuefun} for the third example is challenging, as it requires verification for all \( t_s \in [0,t_b] \). Developing a method to test this assumption is left for future research. All other conditions in Assumption \ref{assumption: valuefun} are satisfied for the third example.

\subsection{Well-Conditioned Example}
\label{sec: well-conditioned example}
\blue{We consider a Moon landing problem where the objective is to guide the spacecraft from an initial state to a target state while satisfying thrust constraints. The state variable is a six-dimensional vector, where the first three components represent position and the last three components represent velocity. The spacecraft has a constant mass of \( \bar m = 1000 \)kg, an initial velocity of \( (0,3,-50) \)m/s, and an initial position of \( (0, 0, 5000) \)m. The thrust is constrained between a minimum and a maximum value, where the maximum thrust \( T_{\max} \) is 7500 N, and the actual thrust range is \( [0.3, 0.8] \) of the maximum thrust. The gravitational acceleration on the Moon is \( -1.62\ \text{m/s}^2 \) in the \( z \) axes. The target state is a low-speed descent at \( (0,0,100) \) m with velocity \( (0,0,-5) \) m/s, allowing a smooth transition to subsequent control phases. The values of these parameters are adjusted based on the data provided in \cite{sostaric2005powered}.}

\blue{This problem follows the same formulation as Problem 4, with the only difference being in the dynamics. The control variable represents the thrust-to-mass ratio. Since the final state is fixed, the terminal cost function \( m(x_{N+1}) \) is set to zero, and the fixed final state constraint is enforced through \( G(x_{N+1}) = 0 \).
 The cost function \( l(\sigma) \) is defined as the absolute value, minimizing control effort. The thrust constraints are given by \( \rho_{\min} = 0.3T_{\max}/\bar m \) and \( \rho_{\max} = 0.8T_{\max}/\bar m \), with a total flight time of \( t_f = 60 \) s.}

\blue{The key difference from Problem 4 is the inclusion of lunar gravity. The discrete-time dynamics are given by
\[
x_{i+1}=A x_i+B\left(u_i+g\right),
\]
where \( g = (0, 0, -1.62) \) represents lunar gravity. The matrices \( A \) and \( B \) are obtained from the continuous-time dynamics:
\begin{equation}
A_c=\begin{bmatrix}
0_{2\times2} & I_{2\times2} \\
0_{2\times2} & 0_{2\times2} 
\end{bmatrix}, \quad 
B_c=\begin{bmatrix}
0_{2\times2} \\ I_{2\times2}
\end{bmatrix},
\end{equation}
and discretized using \( N \) time steps via Equation~\eqref{equ: dicrete dyanmics matrix}. Compared to Problem 4, the dynamics include an additional perturbation term due to gravity, which does not affect the theoretical discussions in the previous section.}

The solution trajectory and control magnitudes for $N = 10$ are shown in Figure~\ref{fig:state_control_standard}. We observe that \ac{LCvx} is invalid at one temporal grid, indicating that the solution to Problem~\hyperref[equ:zero_order_prototype]{4} does not always satisfy the constraints of Problem~\hyperref[equ:original_LCvx]{3}, even for a well-conditioned problem. \blue{Additionally, we tested the extent of \ac{LCvx} violations under finer discretizations. With $N = 30, 50, 100, 300$, we observed one violation for $N = 50$ and $500$, and no violations for $N = 30$ and 300. This confirms that the number of violations does not depend on the discretization level, consistent with our theoretical results.}

\begin{figure}[t]
    \centering
    \begin{subfigure}[t]{0.45\linewidth}
        \centering
        \includegraphics[width=\linewidth,height=3cm]{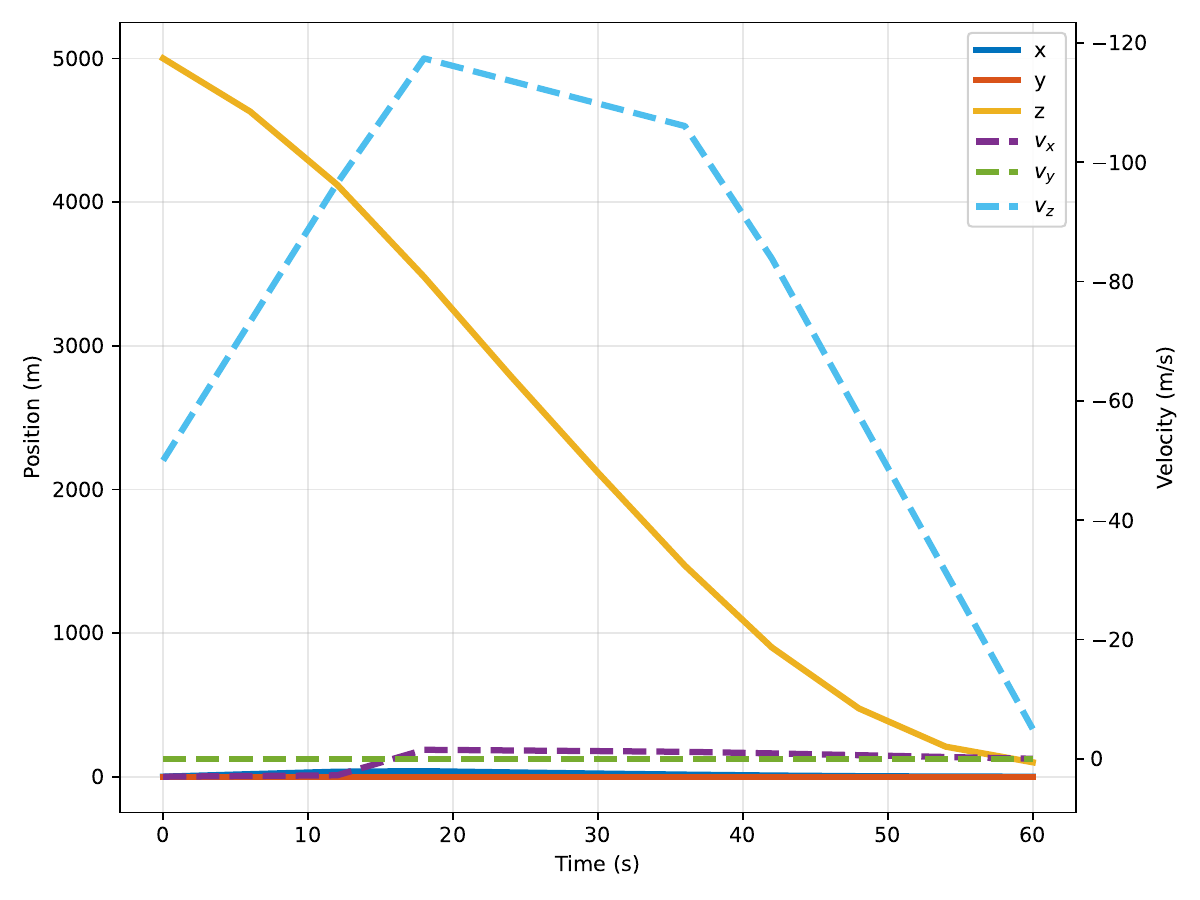}
        \caption{\scriptsize State Trajectory}
        \label{fig:state_trajectory}
    \end{subfigure}
    \hspace{0.5cm} 
    \begin{subfigure}[t]{0.45\linewidth}
        \centering
        \includegraphics[width=\linewidth,height=3cm]{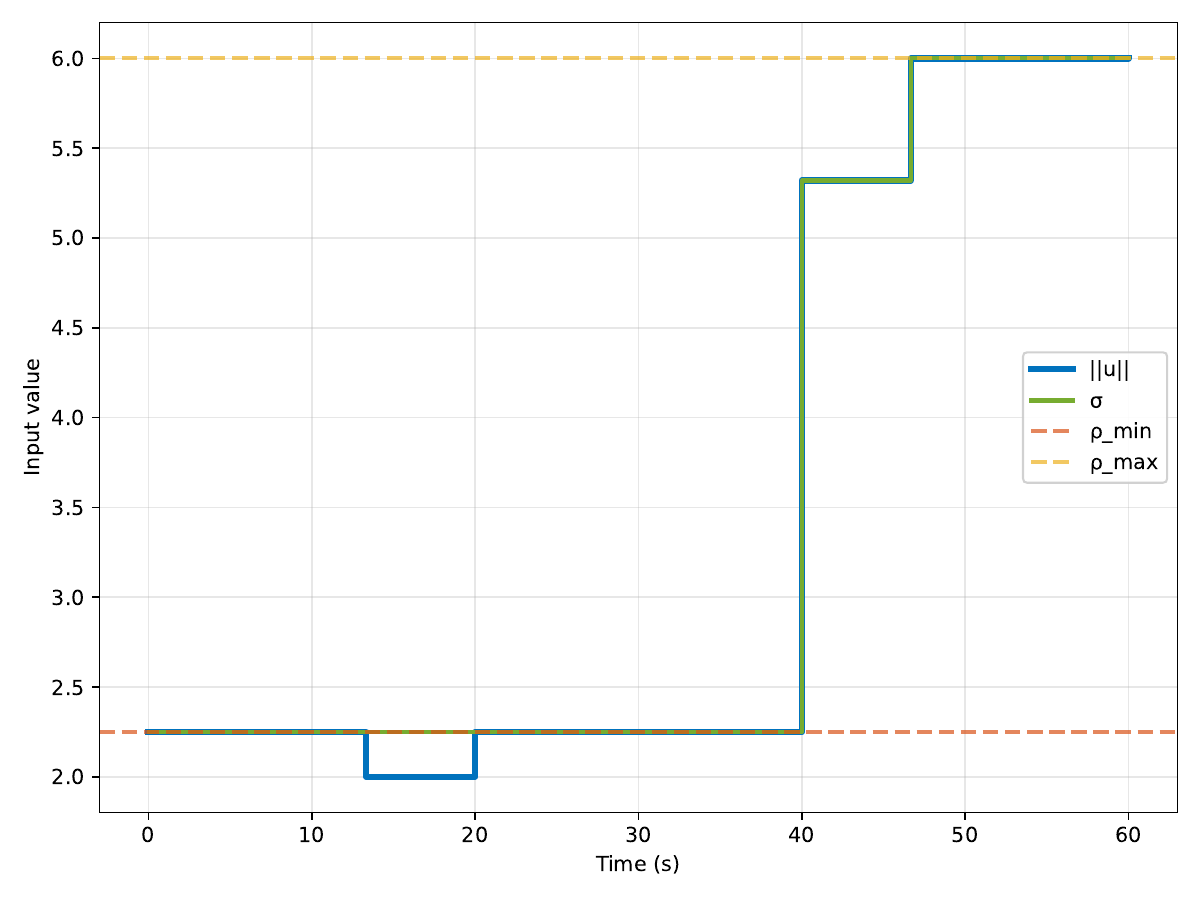}
        \caption{\scriptsize Control Magnitude}
        \label{fig:control_magnitude}
    \end{subfigure}
    \caption{\scriptsize State trajectory and control magnitude for the Moon landing problem over a 60-second flight. (a) Position and velocity evolution, with velocity plotted on an inverted axis. (b) Control norm, thrust bounds, and $\sigma$ evolution. One \ac{LCvx} invalid point occurs where the control norm falls below $\rho_{\min}$. }
    \label{fig:state_control_standard}
\end{figure}

\subsection{Artificial normal example}
\label{sec:Artificial normal example }
In this section, we consider a three-dimensional problem with $N = 10$, $A = \operatorname{diag}(1.2,-2.2,1)$, and $B = (0.4, 0.3, 0.2)^\top.$ Notice that $A$ and $B$ forms a controllable matrix pair. 
\blue{\begin{equation}
\begin{aligned}
& \min _{x, u, \sigma} \quad x_{11, 3}+\sum_{i=1}^{10} \sigma_i \\
& \operatorname{s.t.} \quad x_{i+1}=A x_{i}+ Bu_{i}, \quad i\in \intset {1}{10} \\
& x_{1}=[0,0,0]^{\top},\ x_{11,1}=0.5, \quad x_{11,2}=1 \\
& 1 \leq \sigma_i\leq 2, \quad \|u_i\|^2 \leq \sigma_i, \quad i\in \intset {1}{10} \\
\end{aligned}
\end{equation}
}
Here, the dynamics do not come from any continuous system, as $A$ has negative eigenvalues. This counterexample is formed by making the matrix $\Lambda$ in the proof of Theorem \ref{theorem: main_theorem} singular. The numerical tolerance for CVXPY is set to $10^{-12}$. The solution trajectory is available in Figure \ref{fig:state_before_perturb}, and the magnitude of the control is shown in Figure \ref{fig:control_before_perturb}. We observe that \ac{LCvx} is invalid at three points. According to Theorem \ref{theorem: main_theorem}, adding a perturbation to the dynamics reduces the violation of nonconvex constraints to at most two points. We consider the following dynamics with $A_p = \operatorname{diag}(10^{-7}, 0,0)$:
$$
x_{i+1}=
(A + A_p) x_{i}+ B u_{i}, \ i\in \intset {1}{10}
$$
which introduces a small change to one element in the dynamics matrix. The updated trajectory and control sequence are available in Figure \ref{fig:state_after_perturb} and Figure \ref{fig:control_after_perturb}, respectively. As expected, \ac{LCvx} is invalid at two points after perturbation.

\begin{figure}[tp]
\centering
\begin{subfigure}[t]{0.23\textwidth}
  \centering
  \includegraphics[width=\linewidth,height=3.5cm]{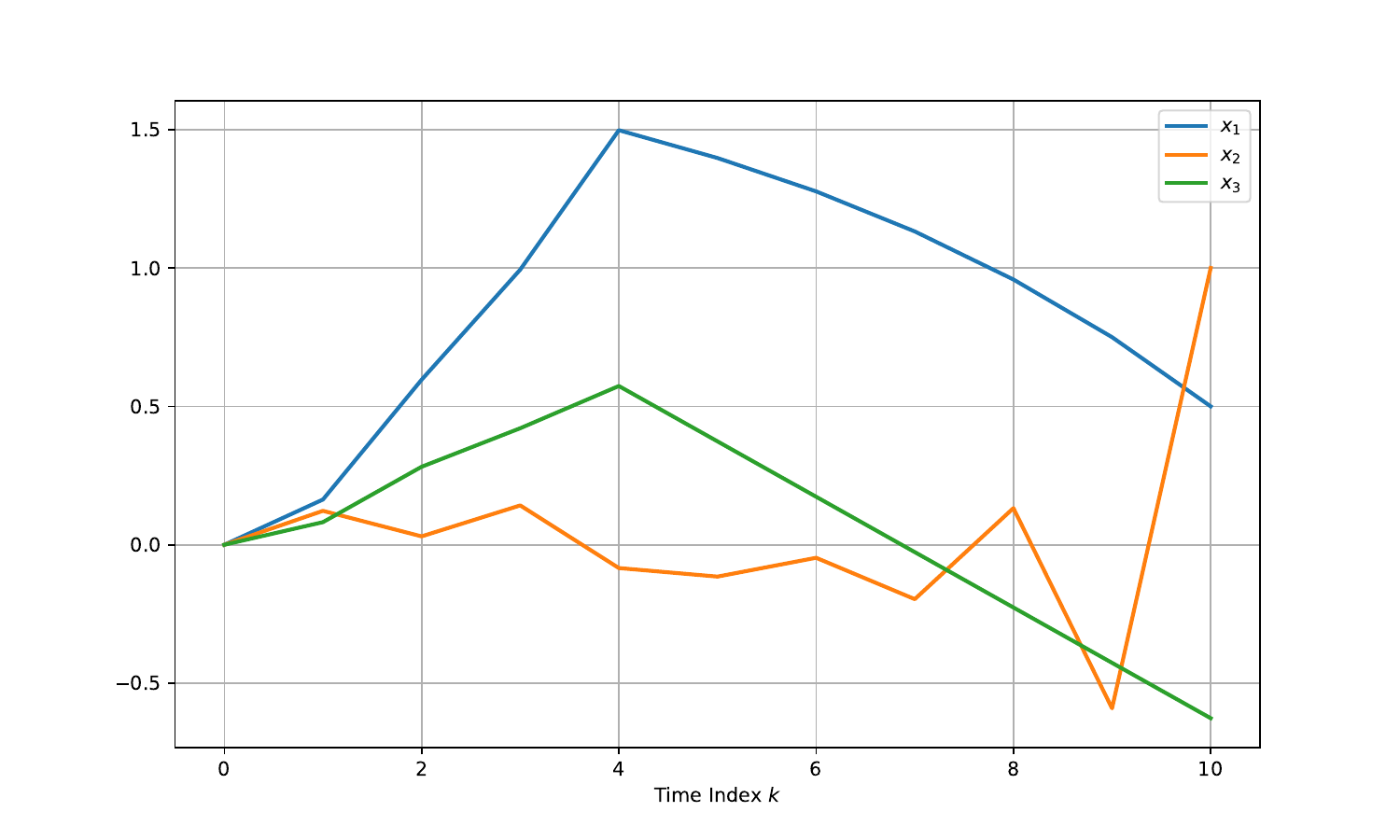}
  \caption{\scriptsize State before perturbation}
  \label{fig:state_before_perturb}
\end{subfigure}
\hfill
\begin{subfigure}[t]{0.23\textwidth}
  \centering
  \includegraphics[width=\linewidth,height=3.5cm]{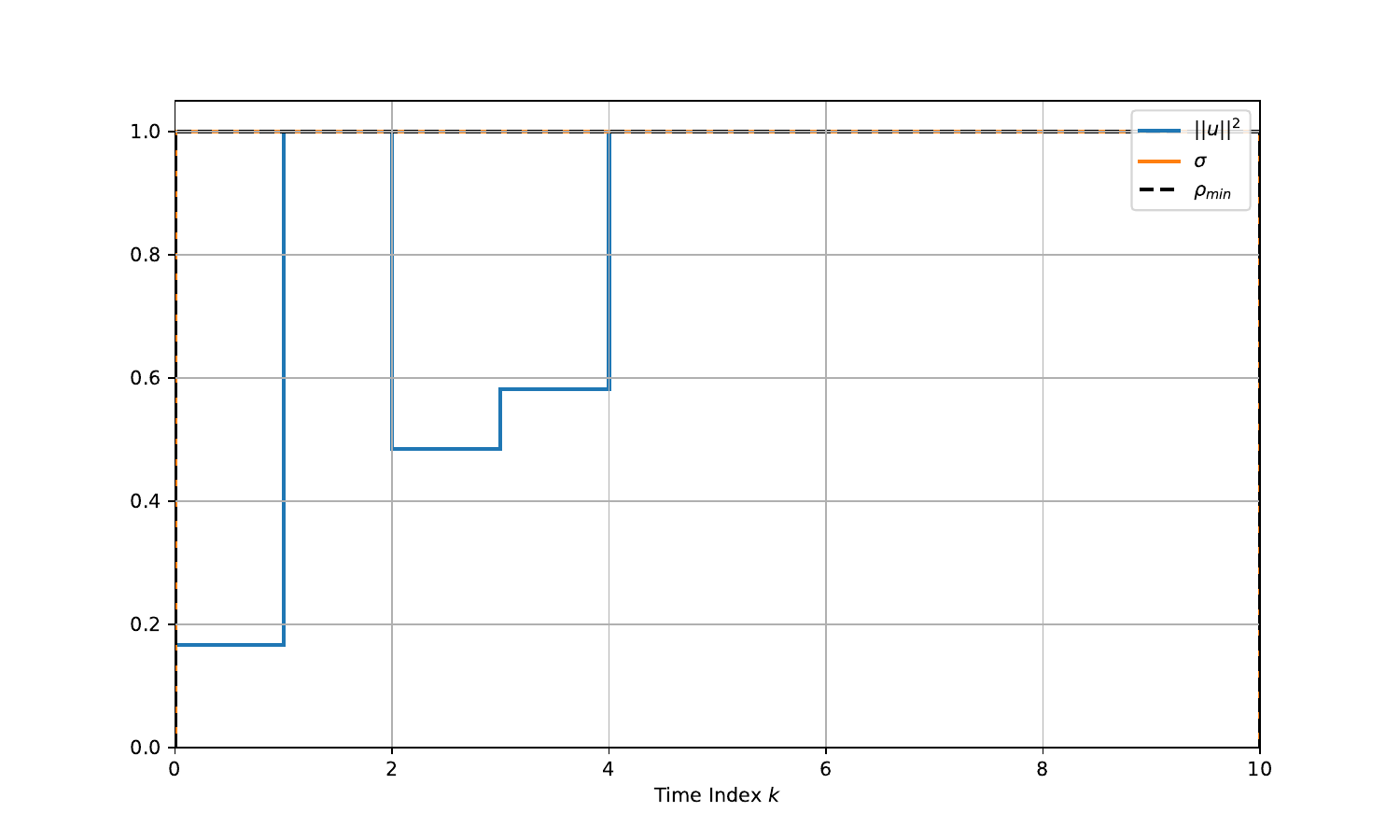}
  \caption{ \scriptsize Control before perturbation}
  \label{fig:control_before_perturb}
\end{subfigure}

\medskip

\begin{subfigure}[t]{0.23\textwidth}
  \centering
  \includegraphics[width=\linewidth,height=3.5cm]{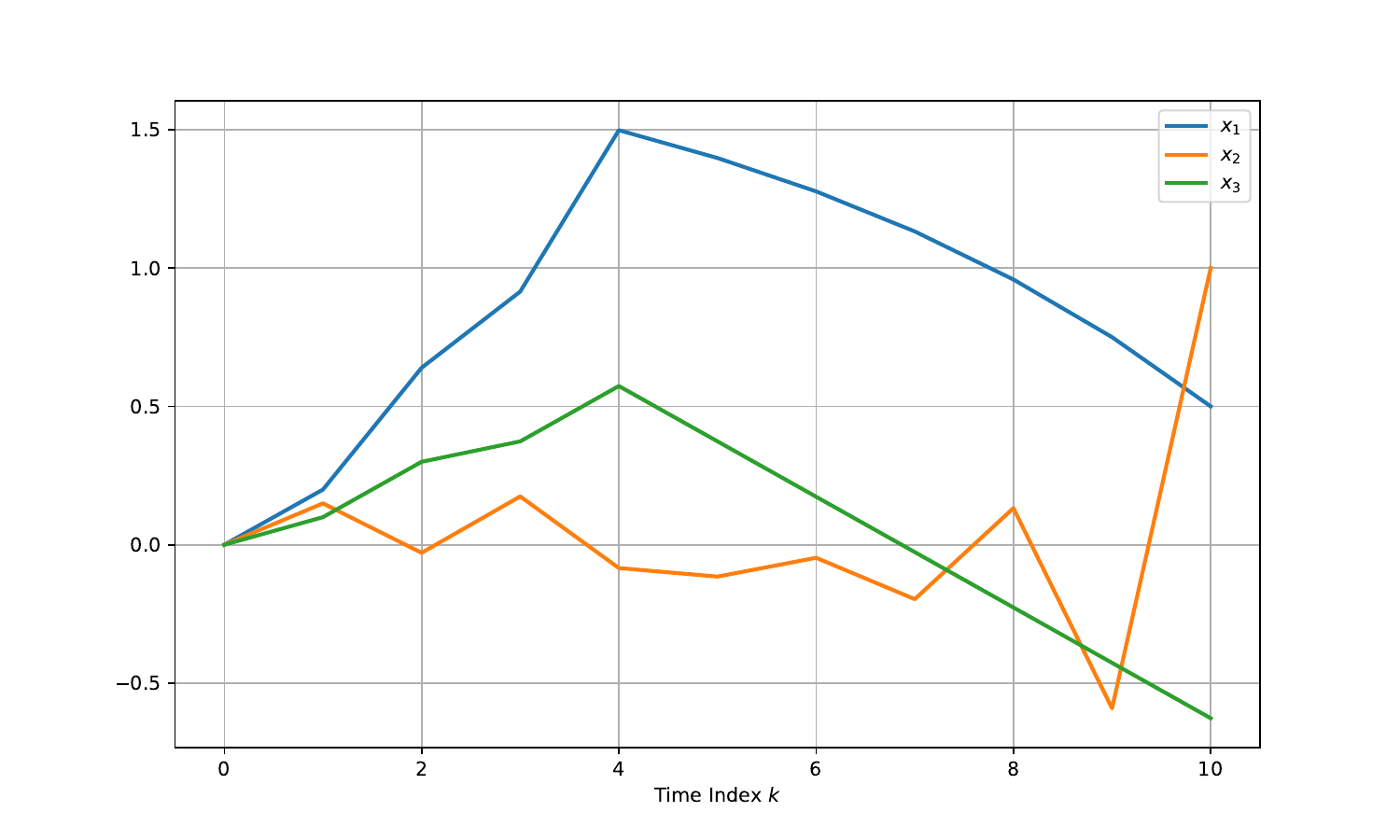}
  \caption{\scriptsize State after perturbation}
  \label{fig:state_after_perturb}
\end{subfigure}%
\hfill
\begin{subfigure}[t]{0.23\textwidth}
  \centering
  \includegraphics[width=\linewidth,height=3.5cm]{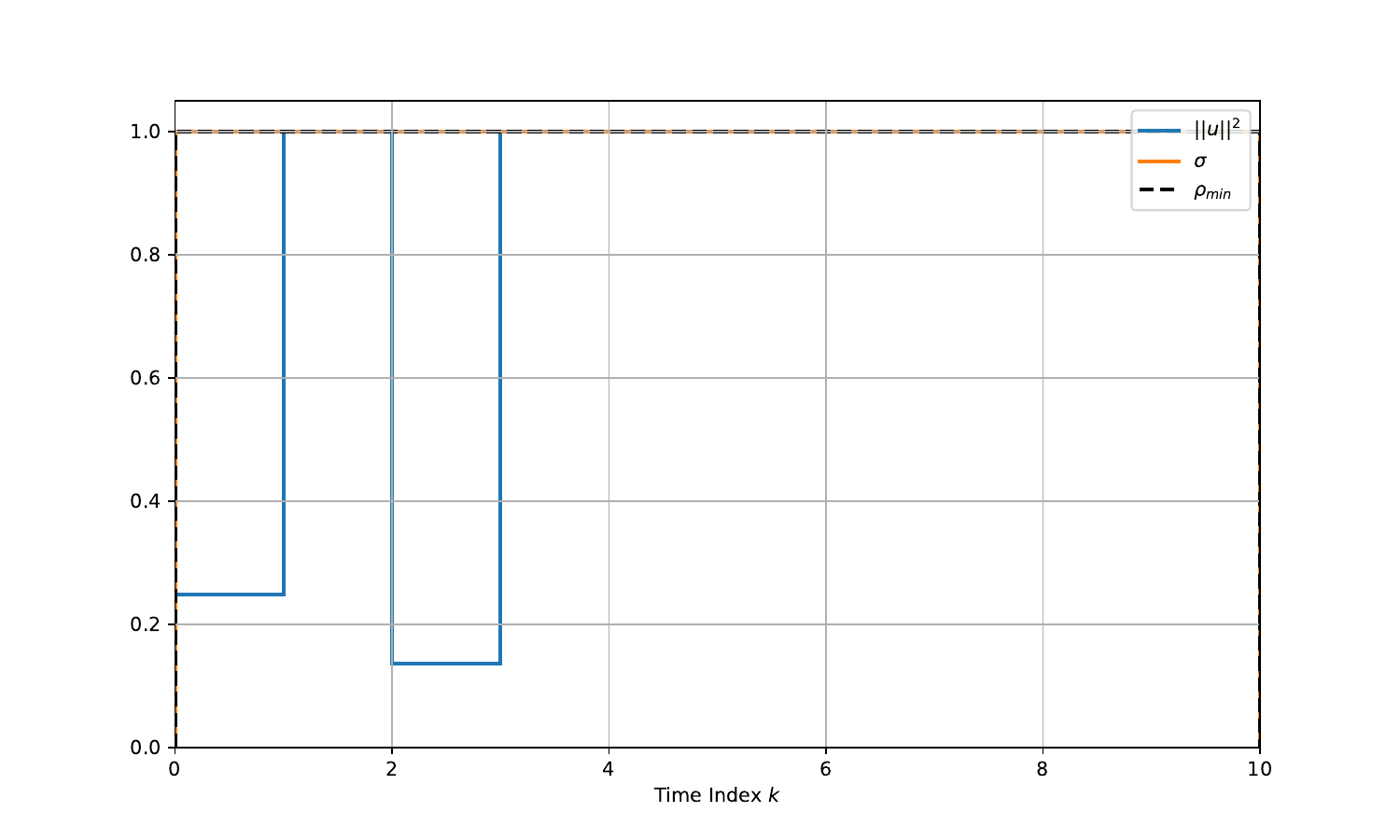}
  \caption{\scriptsize Control after perturbation}
  \label{fig:control_after_perturb}
\end{subfigure}
\caption{\scriptsize System dynamics and control response before and after perturbation for an artificial three-dimensional problem with \( N = 10 \) temporal grid points. The top figures display the state trajectory and control magnitude prior to perturbation, revealing \ac{LCvx} invalid at three points. The bottom figures show the system's response to a perturbation designed as per Theorem \ref{theorem: main_theorem}, which reduces the number of nonconvex constraint violations to two. The state variable remains essentially unchanged.}
\end{figure}

\subsection{Long-horizon example}
\label{sec: long-horizon example}
Our final example is a long-horizon case that follows the same setup as our first example, with the distinction that we have \( N = 20 \) and \( t_f = 200 \). The trajectory and control magnitude are presented in Figures \ref{fig:over_relaxed_state_before} and \ref{fig:over_relaxed_control_before}. These figures show that all control magnitudes are less than \( \rho_{\text{min}} = 1 \), confirming that this is indeed a long-horizon case.

Next, we apply the bisection search method described in Algorithm \ref{algo}, setting \( \epsilon_t = 10^{-2} \) and initializing the control as \( [0,0,\rho_{\min}]^\top \). The algorithm terminates with \( t_{s} = 98.4 \) after seven bisection steps. This suggests maintaining the initial control for 98.4 seconds before switching to a control sequence generated by Problem~\hyperref[equ:LCvxallmissed]{9}. The resulting trajectory and control magnitude are shown in Figures \ref{fig:over_relaxed_state_after} and \ref{fig:over_relaxed_control_after}. The control magnitude plot reveals one \ac{LCvx} invalidation.

\begin{figure}[tp]
\centering

\begin{subfigure}[t]{0.22\textwidth}
  \centering
  \includegraphics[width=\linewidth,height=3cm]{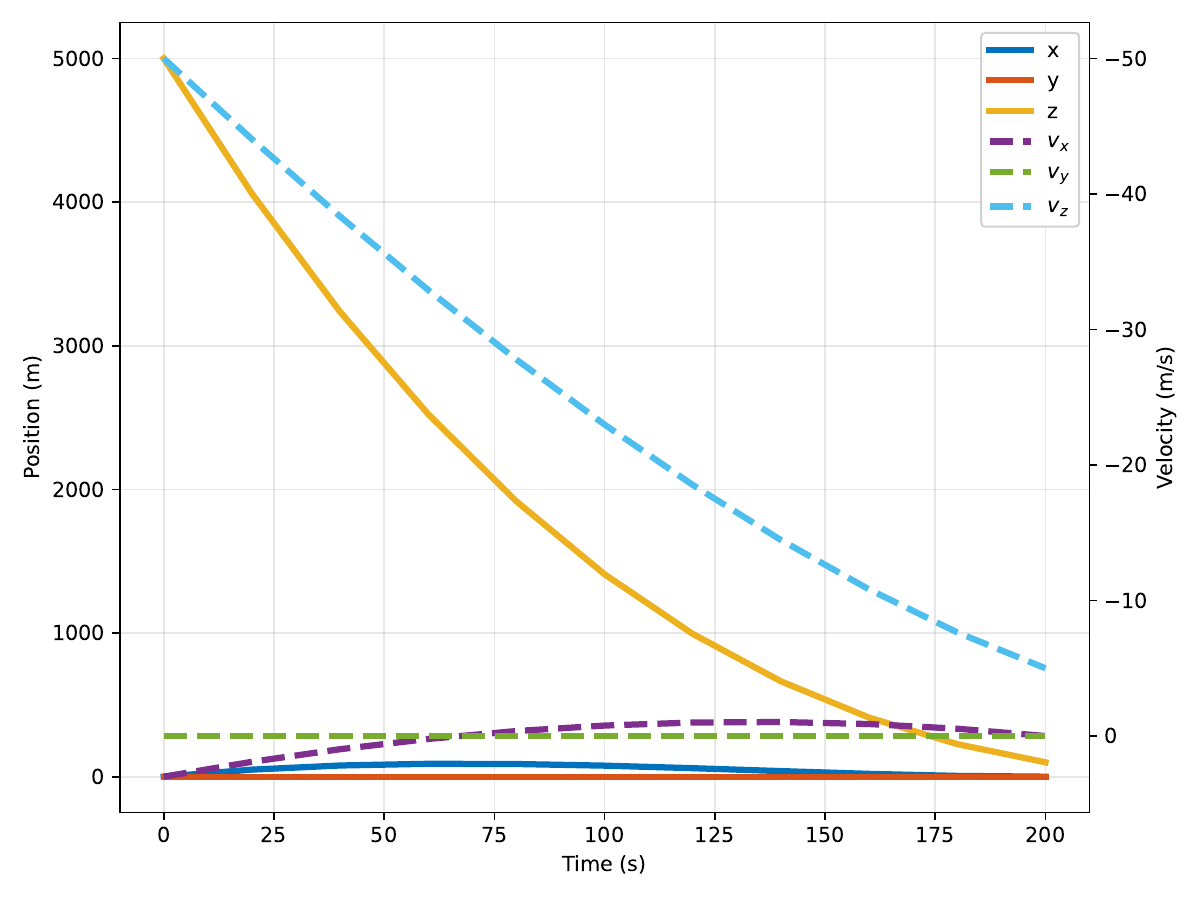}
  \caption{\scriptsize State before bisection}
  \label{fig:over_relaxed_state_before}
\end{subfigure}%
\hfill
\begin{subfigure}[t]{0.22\textwidth}
  \centering
  \includegraphics[width=\linewidth ,height=3cm]{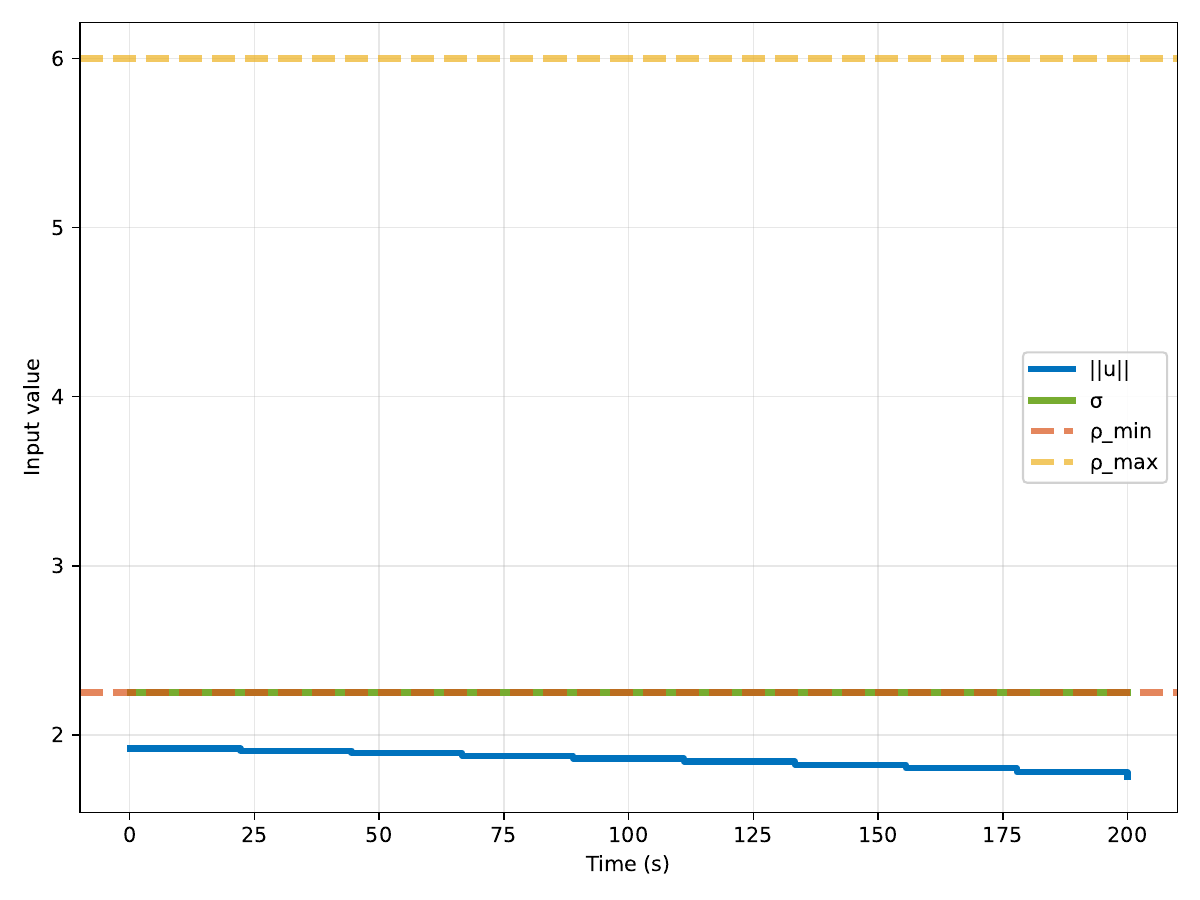}
  \caption{\scriptsize Control before bisection}
  \label{fig:over_relaxed_control_before}
\end{subfigure}

\medskip 

\begin{subfigure}[t]{0.22\textwidth}
  \centering
  \includegraphics[width=\linewidth,height=3cm]{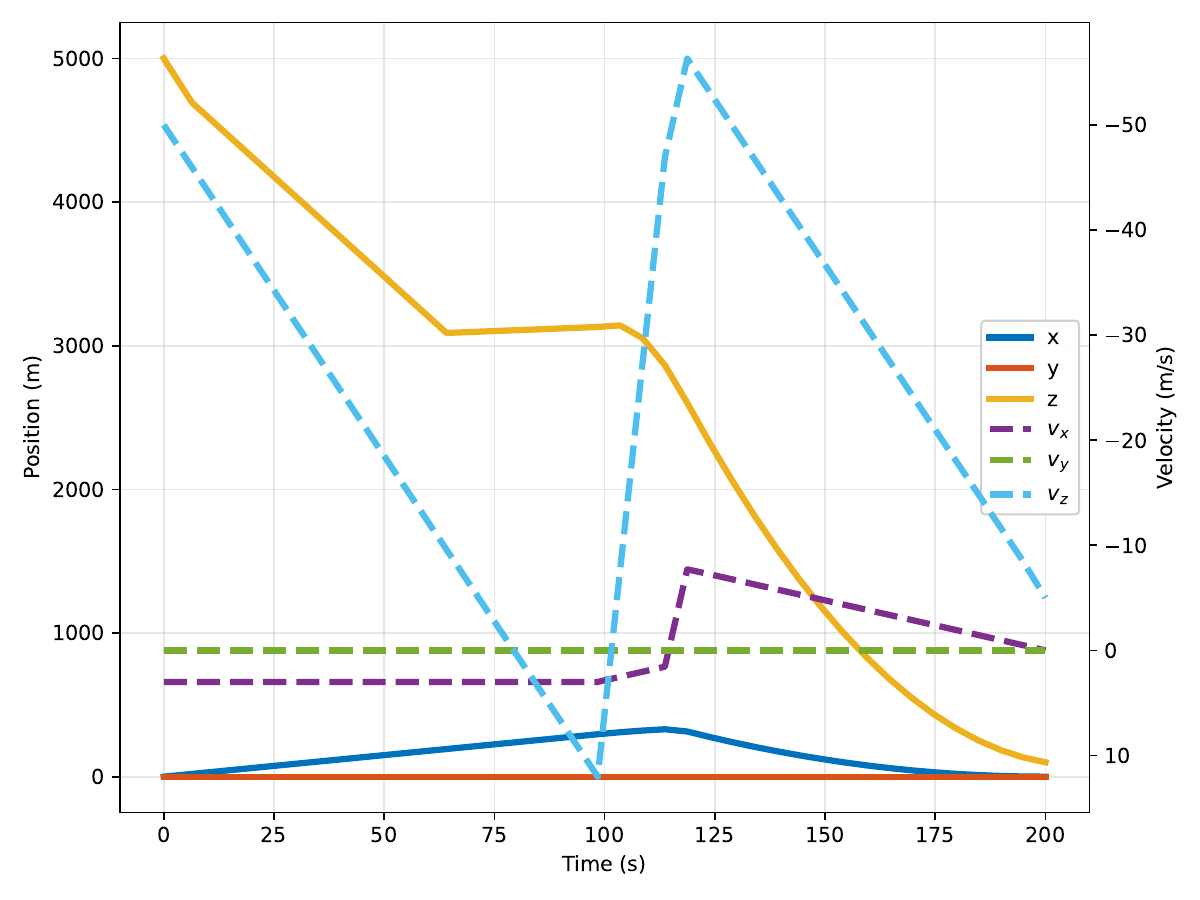}
  \caption{\scriptsize State after bisection}
  \label{fig:over_relaxed_state_after}
\end{subfigure}%
\hfill
\begin{subfigure}[t]{0.22\textwidth}
  \centering
  \includegraphics[width=\linewidth,height=3cm]{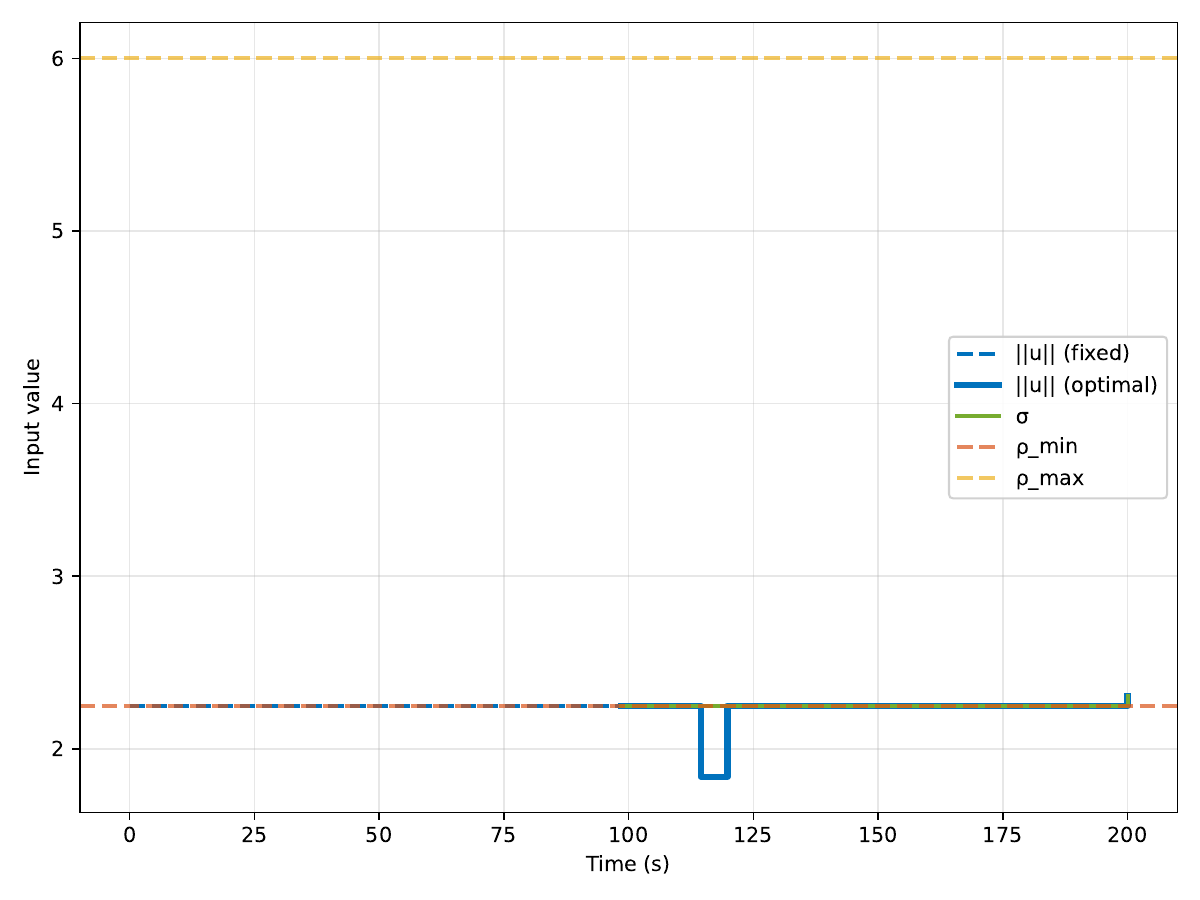}
  \caption{\scriptsize Control after bisection}
  \label{fig:over_relaxed_control_after}
\end{subfigure}

\caption{\scriptsize State trajectory and control magnitude for the Moon landing problem over a 200-second flight. (a,c) Position and velocity evolution before and after the bisection search, with velocity plotted on an inverted axis. (b,d) Control norm, thrust bounds, and \( \sigma \) evolution before and after the bisection search. Figures (a,b) indicate a long-horizon state where the control magnitudes remain below the threshold \( \rho_{\text{min}} \). Figures (c,d) demonstrate the effectiveness of the bisection search method described in Algorithm \ref{algo}, resulting in a trajectory that exhibits an \ac{LCvx} violation at one point. The information for \( \sigma \) is not available before the transition point.
}
\label{fig:over_relaxed_cases}
\end{figure}

\section{Conclusion}
In this paper, we develop a \blue{new theoretical framework for the application of} Lossless Convexification (LCvx) to discrete-time nonconvex optimal control problems, addressing a gap in the existing LCvx literature. Specifically, we establish theoretical guarantees that, under mild assumptions, the optimal solution of the relaxed convex problem satisfies the constraints of the original nonconvex problem at all but at most \(n_x - 1\) temporal grid points when an arbitrarily small perturbation is introduced into the dynamic constraints, where \(n_x\) is the state dimension. Furthermore, we analyze the limitations of the existing LCvx method in long-horizon cases where the transversality condition does not hold and demonstrate how our framework can be extended to handle these cases via a bisection-based approach. Numerical examples illustrate the necessity of the perturbation, the tightness of the \(n_x - 1\) upper bound, and the practical applicability of our theoretical framework to long-horizon problems.

For future work, we aim to investigate the theoretical guarantees of applying the \ac{LCvx} method to various forms of discrete-time nonconvex optimal control problems. Furthermore, we plan to extend our results to develop a lossless convexification method that ensures continuous-time constraint satisfaction~\citep{elango2024successive}.

\section{Appendix}

\subsection{Proof of theorem \ref{thm: vt continuity}}
\label{sec: proofs for background}
\begin{proof}    
Let \(y^*\) be the optimal solution to Problem~\hyperref[equ: general pertube]{6} at \(p_0\), satisfying assumptions in Lemma \ref{thm:clarkeexistance}. The existence of $y^*$ is a consequence of the compactness of $\Omega$ and the continuity of the object function. Lemma \ref{thm:clarkeexistance} ensures the existence of \(\delta > 0\) and \(\xi > 0\) such that for any \(p \in \MR^{n_p}\) with \(|p-p_0| < \xi\), there is a point \(y \in \Omega, F(y, p) = 0\) and 
\begin{equation}
\label{local_equ: contiuous proof}
\|y- y^* \| \leq \frac{\|F(y^*, p)\|}{\delta}.    
\end{equation}

Hence, the feasible set for Problem~\hyperref[equ: general pertube]{6} is nonempty for $p$ sufficiently close to $p_0.$ Additionally, because $\Omega$ is a compact set, optimal solutions exist for Problem~\hyperref[equ: general pertube]{6} for $p$ sufficiently close to $p_0.$

We first prove that \(\liminf_{s\rightarrow p_0} M^*(s) \geq M^*(p_0)\). Consider a sequence \(s_k \in \MR^{n_p}\) sufficiently close to $p_0$ such that \(\lim_k M^*(s_k) = \liminf_{s\rightarrow p_0} M^*(s)\) and \(\lim_k s_k = p_0\). We denote the optimal solution of the Problem~\hyperref[equ: general pertube]{6} generated by \(s_k\) as \(y_{k}\). Due to the compactness of $\Omega$, there exist a convergent subsequence \(y_{k_n}\) and a variable \(\bar{y}\) such that \(\lim_n y_{k_n} = \bar{y}\) and \(\liminf_{s\rightarrow p_0} M^*(s) = \lim_n M^*(s_{k_n}) = \lim_n M(y_{k_n}, s_{k_n})\). Due to the continuity of \(F(y, p)\) and the closedness of \(\Omega\), \(\bar{y}\) is feasible for Problem~\hyperref[equ: general pertube]{6} generated by \(p_0\). Thus, \(\lim_n M^*(s_{k_n}) = \lim_n M(y_{k_n}, s_{k_n}) = M(\bar{y}, p_0) \geq M^*(p_0)\) by the continuity of $M(\cdot, \cdot)$.

Next, we aim to show that \(\limsup_{w\rightarrow p_0} M^*(w) \leq M^*(p_0)\). As the object function \(M(y, p)\) is locally Lipschitz and \(y\) lies in a compact set, there exists a Lipschitz constant \(L_M\) for all feasible \((y, p)\) when \(\|p- p_0\|<\xi\).

Consider a sequence \(w_k\) such that \(\lim_{k} M^*(w_k) =  \limsup_{w\rightarrow p_0} M^*(w)\) with \(|w_k-p_0| < \xi\) and \(\lim_k w_k = p_0\). Lemma \ref{thm:clarkeexistance} ensures the existence of a corresponding sequence \(y_k \in \Omega\) that $F(y_k, w_k) = 0$,
\[
\|y_k- y^*\| \leq \frac{\|F(y^*, w_k)\|}{\delta},
\]
and \(M^*(w_k) \leq M(y_k,w_k)\).
Therefore,
\begin{align}
    \limsup_{w\rightarrow p_0} &M^*(w) = \lim_k M^*(w_k) \\
    &\leq \limsup_k M(y_k,w_k) \\
    &\leq \limsup_k \left( M(y^*, w_k) + L_M \|y_k - y^*\| \right)\label{equ: continuity explain 1} \\
    &\leq M(y^*, p_0) + L_M \limsup_k \frac{\|F(y^*, w_k)\|}{\delta}\\
    &= M(y^*, p_0) + \frac{\|F(y^*, p_0)\|}{\delta} = M^*(p_0). \label{equ: continuity explain 2}\\
\end{align} Here, inequality \eqref{equ: continuity explain 1} is due to the Lipschitz property of $M$ and inequality \eqref{equ: continuity explain 2} is due to the continuity of $F$.

In summary, we have
\[
\limsup_{p\rightarrow p_0} M^*(p) \leq M^*(p_0) \leq \liminf_{p\rightarrow p_0} M^*(p),
\]
thus establishing the continuity of \(M^*\). 
\qed \end{proof}

\subsection{Proof for Lemma \ref{lemma: perturbation_all_stability}}
\label{sec: proof for stability}
Since Assumption \ref{assumption: uicompactness} trivially holds, we start by showing the validity of Assumptions \ref{assumption:controllability} and \ref{assumption: dynamics full rank}.

\begin{lemma}
\label{lemma: perturbation_controllable_stability}
Suppose that Assumptions \ref{assumption:controllability} and \ref{assumption: dynamics full rank} hold for Problem~\hyperref[equ:perturbed_zero_order_prototype]{8} at $q = 0$. Then, there exists an \(\epsilon > 0\) such that for any \(q \in \MR^d \) within the cube \(\epsilon \mathbb{D} \in \MR^d \), where $\mathbb{D} \subset \MR^d$ is a unit cube, both assumptions are satisfied for Problem~\hyperref[equ:perturbed_zero_order_prototype]{8} generated by $q$. 
\end{lemma}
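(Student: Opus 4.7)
The plan is to observe that both Assumption \ref{assumption:controllability} and Assumption \ref{assumption: dynamics full rank} are characterized by full-rank conditions on matrices whose entries depend continuously on $q$, and then invoke the elementary fact that the set of full-row-rank matrices is open.

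First, I would verify the regularity of the perturbation map. By Definition \ref{def: perturbation}, $\tilde{A}(q) = Q^{-1} \tilde{J}(q) Q$, where $\tilde{J}(q)$ differs from $J$ only in its diagonal entries, with each occurrence of $\lambda_i$ replaced by $\lambda_i + q_i$. Hence $\tilde{J}(q) = J + D(q)$ with $D(q)$ a diagonal matrix whose entries are linear in $q$, and consequently $\tilde{A}(q) = A + Q^{-1} D(q) Q$ is linear (hence continuous) in $q$.

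For Assumption \ref{assumption:controllability}, consider the controllability matrix
\begin{equation*}
C(q) \;:=\; \bigl(B,\; \tilde{A}(q) B,\; \cdots,\; \tilde{A}(q)^{n_x-1} B\bigr),
\end{equation*}
whose entries are polynomial in the entries of $\tilde{A}(q)$, and thus polynomial in $q$. Since Assumption \ref{assumption:controllability} holds at $q=0$, there exists an $n_x \times n_x$ submatrix of $C(0)$ with nonzero determinant. The map $q \mapsto \det(\cdot)$ of this submatrix is continuous, so this determinant remains nonzero on some open neighborhood $U_1$ of $0 \in \mathbb{R}^d$, which gives $\operatorname{rank}(C(q)) = n_x$ throughout $U_1$. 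For Assumption \ref{assumption: dynamics full rank}, the only entries of $\nabla H(z, q)$ that depend on $q$ are the $-\tilde{A}(q)$ blocks appearing in the Jacobian of the affine dynamics constraints $-x_{i+1} + \tilde{A}(q) x_i + B u_i = 0$; the boundary-condition rows are independent of $q$. Hence the entries of $\nabla H$ depend continuously (in fact linearly) on $q$. Assumption \ref{assumption: dynamics full rank} at $q=0$ supplies an $n_H \times n_H$ submatrix with nonzero determinant, and again by continuity this submatrix remains nonsingular on some open neighborhood $U_2$ of $0$, so $\nabla H$ retains full row rank on $U_2$.

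Finally, since $U_1 \cap U_2$ is an open neighborhood of the origin in $\mathbb{R}^d$, it contains some cube $\epsilon \mathbb{D}$ for a sufficiently small $\epsilon > 0$; this $\epsilon$ satisfies the claim. The only genuinely delicate point is the very first step—confirming that the bookkeeping of Definition \ref{def: perturbation} does indeed produce a continuous map $q \mapsto \tilde{A}(q)$—after which the argument reduces to openness of the nonvanishing of minors.
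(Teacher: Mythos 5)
Your proof is correct and takes essentially the same route as the paper's: both reduce each assumption to a full-rank condition on a matrix depending continuously on $q$ and invoke openness of that condition (the paper via $\det(S(q)S(q)^\top)\neq 0$, you via a nonvanishing $n_x\times n_x$ minor, which is equivalent). Your explicit verification that $q\mapsto\tilde{A}(q)$ is linear, and your spelled-out treatment of $\nabla H$, merely fill in details the paper asserts or omits as "similar."
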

\begin{proof}
The proof of both assumptions relies on the fact that the determinant of a matrix is a continuous function of elements in that matrix. We demonstrate Assumption \ref{assumption:controllability} for Problem~\hyperref[equ:perturbed_zero_order_prototype]{8}, while the nonsingularity of $\nabla_z H$ in Assumption \ref{assumption: dynamics full rank} follows similarly and thus is omitted.

Define
\begin{equation}
\label{equ: matrix_S}
S(q) := \left(\begin{array}{llll}
B & \tilde{A}(q) B & \cdots & \tilde{A}(q)^{n_x-1} B
\end{array}\right).
\end{equation}
We aim to show that \(S(q)\) retains full rank for all \(q\) within \(\epsilon \mathbb{D}\). \(S(q)\) has full rank if and only if the determinant of \(S(q) S(q)^\top\) is nonzero. As the elements in matrix \(\tilde{A}\) are continuous functions of \(q\), the elements and, consequently, the determinant of \(S(q) S(q)^\top\) are continuous functions of \(q\). Given that \(\tilde{A}(0) = A\), the controllability of the matrix pair \(\{A,B\}\) ensures that \(S(0)\) has full rank and that the determinant of \(S(0) S(0)^\top\) is nonzero. Through continuity, an \(\epsilon > 0\) exists such that for any \(q\) within \(\epsilon \mathbb{D}\), the determinant of \(S(q) S(q)^\top\) remains nonzero, and \(S(q)\) maintains full rank.
\qed \end{proof}

Now, we turn to prove that Assumption \ref{assumption: slater} holds after perturbation. To simplify the notation for Problem~\hyperref[equ:perturbed_zero_order_prototype]{8}, we let \(z = (x, u, \sigma)\). The object function \(\Phi(z)\) and the set \(\tilde V\) formed by inequalities are the same as in  Problem~\hyperref[equ:simplified_zero_order_hold]{5}. The affine constraint function \(H(z)\) in  Problem~\hyperref[equ:simplified_zero_order_hold]{5} is augmented to \(\tilde H(z, q)\) to accommodate the dynamics and boundary constraints of Problem~\hyperref[equ:perturbed_zero_order_prototype]{8}. Thus, our perturbed problem can be represented as:
\begin{equation}
\label{equ:simplified_perturbation}
\begin{aligned}
\min _z\quad & \Phi(z) \\
\text { s.t. }\; & \tilde{H}(z, q)=0, \quad z \in \tilde{V},
\end{aligned}
\end{equation}
When \(q = 0\), we have \(\tilde H(z, 0) = H(z)\), thereby retrieving the original Problem~\hyperref[equ:simplified_zero_order_hold]{5}.

Since we need lemma \ref{thm:clarkeexistance}  and theorem \ref{thm: vt continuity} for our discussion, we must demonstrate that their assumptions are valid.

\begin{lemma}
\label{lemma: cQ}
 Assumption \ref{assumption: dynamics full rank} and Assumption \ref{assumption: slater} for Problem~\hyperref[equ:zero_order_prototype]{4} ensures that for any vector $z_0$ such that 
 $$
  \tilde H(z_0, 0) = 0, \quad z_0 \in \tilde V,
 $$
we have that $y = 0$ is the unique solution of the following equation:
\begin{equation}
0 \in \nabla_z \tilde H\left(z_0, 0\right)^\top y + N_{\tilde V}\left(z_0\right) 
\end{equation}

\end{lemma}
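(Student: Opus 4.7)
The plan is to reduce the inclusion to a standard normal cone inequality and then exploit Slater's condition together with the affineness of $H$ to force $\nabla_z \tilde H(z_0, 0)^\top y = 0$, after which the full row rank of this matrix finishes the argument.

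First I would unpack the inclusion. Saying $0 \in \nabla_z \tilde H(z_0, 0)^\top y + N_{\tilde V}(z_0)$ means there exists $v \in N_{\tilde V}(z_0)$ with $-\nabla_z \tilde H(z_0, 0)^\top y = v$. Since $\tilde H(z, 0) = H(z)$ is affine, $\nabla_z \tilde H(z_0, 0)$ coincides with the constant matrix $\nabla H$. The definition of the normal cone then yields
\begin{equation}
\langle \nabla H^\top y,\, z - z_0 \rangle \ \geq\ 0 \qquad \text{for every } z \in \tilde V.
\end{equation}

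Second, I would invoke Assumption \ref{assumption: slater} to produce a Slater point $\bar z \in \operatorname{int}(\tilde V)$ with $H(\bar z) = 0$. Because $H$ is affine and both $z_0$ and $\bar z$ satisfy $H(\cdot) = 0$, we have $\nabla H(\bar z - z_0) = H(\bar z) - H(z_0) = 0$. Since $\bar z$ lies in the interior, there is a radius $r > 0$ with $\bar z + w \in \tilde V$ for all $\|w\| \leq r$. Plugging $z = \bar z + w$ into the normal cone inequality and using $\nabla H(\bar z - z_0) = 0$ gives $\langle y, \nabla H w \rangle \geq 0$ for every $\|w\| \leq r$; replacing $w$ by $-w$ forces equality, so $\nabla H^\top y = 0$.

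Finally, Assumption \ref{assumption: dynamics full rank} asserts that $\nabla H$ has full row rank, so $\nabla H^\top$ has trivial kernel, and we conclude $y = 0$. The only delicate step is the second one: one must carefully use the interior property of $\bar z$ together with the affineness of $H$ to upgrade the inequality into a linear equation on $y$; everything else is bookkeeping with signs in the normal cone definition and a rank argument.
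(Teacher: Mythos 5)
Your proof is correct and uses the same essential mechanism as the paper's: transfer the normal-cone condition at $z_0$ to the interior Slater point via the affineness of $H$, conclude $\nabla H^\top y = 0$, and finish with the full-row-rank assumption. The only cosmetic difference is that the paper routes this through the auxiliary convex function $g(z) = y^\top \tilde H(z,0) + I_{\tilde V}(z)$ and the observation that both $z_0$ and the Slater point minimize it, whereas you unpack the variational inequality directly; both are sound.
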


\begin{proof}
We prove this statement by contradiction. Assume there exists \(y \neq 0\) such that \[0 \in \nabla_z \tilde H\left(z_0, 0\right)^\top y + N_{\tilde V}\left(z_0\right).\] Consider the function \(g(z) = y^\top \tilde H(z, 0) + I_{\tilde V}(z)\), where \(I_{\tilde V}(z)\) is the indicator function for the set \(\tilde V\). Thus we have \(0 \in \partial g(z_0)\) and \(\tilde H(z_0, 0) = 0\). Since \(g(z)\) is a convex function, the minimum of \(g(z)\) is zero, achieved at $z_0$. 

Given Assumption \ref{assumption: slater}, there exists an interior point \(z'\) in \(\tilde V\) where \(\tilde H(z', 0) = 0\) and \(N_{\tilde V}(z') = \{0\}\). Therefore, \(z'\) is also a minimizer of the function \(g\), and thus \(0 \in \partial g(z') = y^\top \nabla_z \tilde H(z', 0)\). Meanwhile, $\nabla_z \tilde H(z', 0) = \nabla_z \tilde H(z_0, 0)$ because $H$ is an affine function of variable $z$. Consequently, the vector \(y\) lies in the left null space of \(\nabla_z \tilde H(z,0)\), which contradicts Assumption \ref{assumption: dynamics full rank}.
\qed \end{proof}

Now, we are ready to prove that Assumption \ref{assumption: slater} holds after perturbation:

\begin{lemma}
\label{lemma: perturbation_slater_stability}
Suppose that Assumption \ref{assumption: dynamics full rank} and \ref{assumption: slater} hold for Problem~\hyperref[equ:perturbed_zero_order_prototype]{8} when \(q = 0\). Then, there exists an \(\epsilon > 0\) such that for any \(q \in \MR^d \) within the cube \(\epsilon \mathbb{D} \in \MR^d \), where $\mathbb{D} \in \MR^d$ is an unit cube, such that the perturbed Problem~\hyperref[equ:perturbed_zero_order_prototype]{8} with respect to $q$ also satisfies Assumption \ref{assumption: slater}.
\end{lemma}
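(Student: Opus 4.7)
The plan is to apply the perturbation existence result of Lemma~\ref{thm:clarkeexistance} with the Slater point of the unperturbed problem as the reference point, and then argue that the nearby feasible point produced by the lemma remains in the \emph{interior} of $\tilde{V}$ for sufficiently small $q$. The existence of a perturbed feasible point is straightforward; the delicate part is promoting ``feasible'' to ``strictly feasible''.

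First, invoke Assumption~\ref{assumption: slater} to pick a point $z_0$ in the interior of $\tilde{V}$ with $\tilde{H}(z_0,0) = 0$. Lemma~\ref{lemma: cQ} immediately supplies the constraint qualification
\[
0 \in \nabla_z \tilde{H}(z_0,0)^\top y + N_{\tilde{V}}(z_0) \implies y = 0,
\]
since Assumptions~\ref{assumption: dynamics full rank} and~\ref{assumption: slater} hold at $q = 0$. This is exactly the hypothesis needed to apply Lemma~\ref{thm:clarkeexistance} with $y \leftrightarrow z$, $p \leftrightarrow q$, $\Omega \leftrightarrow \tilde{V}$, and $F(z,q) \leftrightarrow \tilde{H}(z,q)$. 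Note that $\tilde{H}(z,q)$ is jointly continuous in $(z,q)$ and its $z$-Jacobian is continuous in $(z,q)$, because the perturbed dynamics matrix $\tilde{A}(q) = Q^{-1}\tilde{J}Q$ depends (linearly) on $q$ and enters $\tilde{H}$ in an affine fashion.

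Lemma~\ref{thm:clarkeexistance} then yields constants $\delta,\xi > 0$ such that for every $q$ with $\|q\| < \xi$ there is some $z(q) \in \tilde{V}$ with $\tilde{H}(z(q),q) = 0$ and
\[
\|z(q) - z_0\| \;\le\; \frac{\|\tilde{H}(z_0,q)\|}{\delta}.
\]
Since $\tilde{H}(z_0,\cdot)$ is continuous in $q$ and $\tilde{H}(z_0,0) = 0$, the right-hand side tends to $0$ as $q \to 0$, so $z(q) \to z_0$.

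Now comes the main obstacle, which is promoting $z(q) \in \tilde{V}$ to $z(q) \in \operatorname{int}\tilde{V}$. Because $z_0$ lies in $\operatorname{int}\tilde{V}$, there exists $r > 0$ such that the open ball $B(z_0, r)$ is contained in $\operatorname{int}\tilde{V}$. Choose $\epsilon \in (0,\xi)$ small enough that $\|\tilde{H}(z_0,q)\|/\delta < r$ for every $q \in \epsilon\mathbb{D}$; this is possible by continuity of $\tilde{H}(z_0,\cdot)$ at $0$ on the compact cube. Then for any such $q$, the point $z(q)$ lies in $B(z_0,r) \subset \operatorname{int}\tilde{V}$ and satisfies $\tilde{H}(z(q),q) = 0$, which is exactly Slater's condition for the perturbed Problem~\hyperref[equ:perturbed_zero_order_prototype]{8}. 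Intersecting this $\epsilon$ with the one produced by Lemma~\ref{lemma: perturbation_controllable_stability} completes the proof of Lemma~\ref{lemma: perturbation_all_stability}. \qed
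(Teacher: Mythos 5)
Your proposal is correct and follows essentially the same route as the paper's own proof: both invoke Lemma~\ref{lemma: cQ} to verify the constraint qualification, apply Lemma~\ref{thm:clarkeexistance} at the unperturbed Slater point $z_0$ to produce a nearby feasible $z(q)$, and use continuity of $\tilde{H}(z_0,\cdot)$ to force $z(q)$ into the interior of $\tilde{V}$. Your version merely spells out the final interior-ball argument slightly more explicitly than the paper does.
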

\begin{proof}
Slater's assumption implies the existence of a \(z_0\) such that \(H(z_0)= \tilde H(z_0,0) = 0\), while \(z_0\) lies in the interior of \(\tilde V\). Since Lemma \ref{lemma: cQ} ensures the applicability of Lemma \ref{thm:clarkeexistance}, there exist \(\epsilon_1 > 0\) and \(\delta > 0\) such that for all \(\|q\| < \epsilon_1\), there exists a \(z(q)\), determined by \(q\), satisfying 
\[
\|z(q) - z_0\| \leq \frac{\|\tilde H(z_0, q)\|}{\delta},
\]
and \(\tilde H(z(q), q) = 0\), thus meeting the dynamics and boundary conditions of Problem~\hyperref[equ:perturbed_zero_order_prototype]{8}. Given that $\tilde H$ is a continuous function, there exists an $\epsilon > 0$, such that when  \(\|q\|<\epsilon\), $\|z(q) - z_0\|$ is sufficiently small for \(z(q)\) to lie in the interior of \(\tilde V\), thereby validating Slater's condition for the perturbed problem. \qed

 \end{proof}

Now we show that the perturbed problem is normal.
\begin{lemma}
\label{lemma: perturbation_normality_stability}
Suppose that Assumptions \ref{assumption: uicompactness}, \ref{assumption: dynamics full rank}, \ref{assumption: slater} and  \ref{assumption: eta_N nonzero} hold for Problem~\hyperref[equ:perturbed_zero_order_prototype]{8} and $q = 0$. Then there exists an \(\epsilon > 0\) such that for any \(q\) within the cube \(\epsilon \mathbb{D}\), Assumption \ref{assumption: eta_N nonzero} is still valid for Problem~\hyperref[equ:perturbed_zero_order_prototype]{8} after perturbation by $q$.
\end{lemma}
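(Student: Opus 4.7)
The plan is to invoke continuity of the value function of Problem~\hyperref[equ:perturbed_zero_order_prototype]{8} in the perturbation parameter $q$, via Theorem~\ref{thm: vt continuity}, and combine it with the fact that normality is equivalent to the optimal value strictly exceeding a $q$-independent threshold. Concretely, define
\[
v_{\min} \;:=\; \sum_{i=1}^{N} l_i(\rho_{\min}) \;+\; m^\ast,
\]
where $m^\ast$ is the optimum of Problem~\hyperref[equ: boundaryonly]{7}. Because $\sigma_i \ge \rho_{\min}$ and $G(x_{N+1})=0$ hold on the feasible set of Problem~\hyperref[equ:perturbed_zero_order_prototype]{8}, Assumption~\ref{assumption: uicompactness} (monotonicity of $l_i$) and the definition of $m^\ast$ give $v(q) \ge v_{\min}$ whenever Problem~\hyperref[equ:perturbed_zero_order_prototype]{8} is feasible. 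Moreover, by the (strict) monotonicity of $l_i$ on $[\rho_{\min},\infty)$, the equality $v(q) = v_{\min}$ forces any optimizer to have $\sigma_i^\ast = \rho_{\min}$ for all $i$ and $m(x_{N+1}^\ast) = m^\ast$, i.e., the instance is long-horizon. Hence Assumption~\ref{assumption: eta_N nonzero} at $q=0$ is equivalent to $v(0) > v_{\min}$.

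Next I would verify the hypotheses of Theorem~\ref{thm: vt continuity} for Problem~\hyperref[equ:perturbed_zero_order_prototype]{8} recast as Problem~\hyperref[equ: general pertube]{6} with $y = z$, $p = q$, $M(z,q) = \Phi(z)$, $F(z,q) = \tilde H(z,q)$, and $\Omega = \tilde V \cap \mathbb{B}_R$ for a sufficiently large closed ball $\mathbb{B}_R$ centered at the origin. Continuity of $\tilde H$ and $\nabla_z \tilde H$ in $(z,q)$ is immediate, since $\tilde H$ is affine in $z$ with coefficients depending on $q$ through $\tilde A(q) = Q^{-1}\tilde J Q$, whose entries are polynomials in $q$. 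Local Lipschitz continuity of $\Phi$ follows from $m$ and $l_i$ being $C^1$. For compactness, Assumption~\ref{assumption: uicompactness} forces $(u_i,\sigma_i)\in V$ to lie in a fixed compact set, and since $x$ is generated from $x_{\mathrm{init}}$ by the recursion $x_{i+1} = \tilde A(q)x_i + B u_i$ with $\tilde A(q)$ uniformly bounded on $\{\|q\|\le 1\}$, the $x_i$ are uniformly bounded across all $q$ in a fixed neighborhood of $0$ and all feasible $u$; choose $R$ larger than this bound. The constraint qualification $0 \in \nabla_z \tilde H(z^\ast,0)^\top \eta + N_\Omega(z^\ast) \Rightarrow \eta = 0$ at an optimizer $z^\ast$ of the unperturbed problem follows from Lemma~\ref{lemma: cQ}, provided $z^\ast$ lies in the interior of $\mathbb{B}_R$ so that $N_\Omega(z^\ast) = N_{\tilde V}(z^\ast)$; this is arranged by choosing $R$ strictly larger than $\|z^\ast\|$. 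Theorem~\ref{thm: vt continuity} then yields continuity of $v$ at $q = 0$.

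Finally, combining continuity with $v(0) > v_{\min}$ supplies an $\epsilon > 0$ such that $v(q) > v_{\min}$ for every $q \in \epsilon\mathbb{D}$, intersecting with the neighborhoods produced by Lemmas~\ref{lemma: perturbation_controllable_stability} and~\ref{lemma: perturbation_slater_stability} if necessary. By the equivalence established in the first paragraph, this is precisely Assumption~\ref{assumption: eta_N nonzero} for the perturbed problem, concluding the proof.

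The principal obstacle will be handling the compactness requirement of Theorem~\ref{thm: vt continuity}: $\tilde V$ itself is unbounded in the $x$-coordinates, so one must argue uniform boundedness of all feasible $x$ across $q$ in a neighborhood of $0$ and then truncate to a fixed compact $\Omega$ without disturbing the constraint qualification. A related but routine subtlety is the strict-monotonicity step used in the first paragraph; this is where the $C^1$ assumption on $l_i$, together with the statement that $l_i$ is monotonically increasing on $[\rho_{\min},\infty)$, needs to be read as strict monotonicity so that $\sigma_i^\ast > \rho_{\min}$ produces a strictly larger cost.
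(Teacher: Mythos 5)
Your proposal follows essentially the same route as the paper's own proof: both characterize normality as the optimal value strictly exceeding the threshold $m^\ast + \sum_{i=1}^N l_i(\rho_{\min})$, invoke Theorem~\ref{thm: vt continuity} (with Lemma~\ref{lemma: cQ} supplying the constraint qualification and a truncation of $\tilde V$ supplying compactness) to get continuity of the value function in $q$, and conclude that the strict inequality persists for small $q$. Your handling of compactness by bounding the $x$-coordinates through the dynamics recursion, and your explicit flagging of the strict-monotonicity reading of $l_i$, are slightly more careful renderings of steps the paper passes over quickly, but the argument is the same.
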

\begin{proof}
We apply Theorem \ref{thm: vt continuity} to show that the optimum value of Problem~\hyperref[equ:perturbed_zero_order_prototype]{8} is continuous with respect to \(q\). To satisfy the theorem's assumptions, we introduce a sufficiently large trivial upper bound on \(\sigma\), ensuring the compactness of \(\tilde V\). Assumption \ref{assumption: uicompactness} guarantees that the optimal solution remains unchanged with or without this upper bound. Consequently, Assumption \ref{assumption: uicompactness} and Lemma \ref{lemma: cQ} confirm the validity of assumptions to Theorem \ref{thm: vt continuity}.

Since when \(q = 0\) Problem~\hyperref[equ:zero_order_prototype]{4} is normal, the optimal value of Problem~\hyperref[equ:zero_order_prototype]{4} is larger than \(m^* + \sum_{i = 1}^N l_i(\rho_{\min})\), where $m^*$ is the minimum value of Problem~\hyperref[equ: boundaryonly]{7}. Thus, the continuity of the optimal value function ensures the existence of an \(\epsilon > 0\) such that for all \(q \in \epsilon \mathbb{D}\), the optimal value of Problem~\hyperref[equ:perturbed_zero_order_prototype]{8} generated by \(\{\tilde A(q), B\}\) is greater than \(m^* + \sum_{i = 1}^N l_i(\rho_{\min})\). Assumption \ref{assumption: uicompactness} ensures that $l_i$ are monotone, indicating that the perturbed problem is still normal.
\qed \end{proof}
\subsection{Proof for Theorem \ref{theorem: main_theorem}}
\label{sec: proof for the main theorem}
The following is a useful linear algebra theorem from~\citep[Theorem 3.5]{chen1984linear} that aids us in proving Theorem \ref{theorem: main_theorem}.

\begin{theorem}
\label{theorem: matrix poly}
Consider a function $f(\lambda)$ and a matrix $A \in \MR^{n \times n}$. The characteristic polynomial of $A$ is
\[
\Delta(\lambda) = \operatorname{det}(I - \lambda A) = \prod_{i=1}^d (\lambda - \lambda_i)^{m_i},
\]
where $m_i$ are the algebraic multiplicity of eigenvalue $\lambda_i$, $n=\sum_{i=1}^d m_i$, and $d$ is the total number of distinct eigenvalues.

There exists a polynomial $h(\lambda)$ as
\begin{equation}
\label{equ: matrix poly}
    h(\lambda) := v_0 + v_1 \lambda + \cdots + v_{n-1} \lambda^{n-1},
\end{equation}
such that 
\[
f(A) = h(A).
\]
The unknown coefficients of \eqref{equ: matrix poly} are found by solving $n$ equations:
\begin{equation}
\label{equJ: matrix poly coef}
f^{(l)}(\lambda_i) = h^{(l)}(\lambda_i) \text{ for } l\in \intset{0}{m_i-1}, i \in \intset{1}{d},
\end{equation}
where
\begin{equation}
f^{(l)}(\lambda_i) := \left. \frac{d^l f(\lambda)}{d \lambda^l} \right|_{\lambda=\lambda_i},
\end{equation}
and $h^{(l)}(\lambda_i)$ is defined in the same manner.

\end{theorem}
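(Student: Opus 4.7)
The plan is to combine the Jordan canonical form of $A$ with Hermite interpolation. First, I would decompose $A = Q^{-1} J Q$ where $J$ is block-diagonal with Jordan blocks; for each distinct eigenvalue $\lambda_i$ (of algebraic multiplicity $m_i$) the associated Jordan blocks $J_{ij}$ have sizes $k_{ij}$ summing to $m_i$, with $k_{ij} \leq m_i$. Each such block admits the splitting $J_{ij} = \lambda_i I + N_{ij}$, where $N_{ij}$ is nilpotent with $N_{ij}^{k_{ij}} = 0$. Because $p(A) = Q^{-1} p(J) Q$ for any polynomial $p$, and $p(J)$ is block-diagonal with blocks $p(J_{ij})$, the problem reduces to understanding $p$ on individual Jordan blocks.

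The key computation is that polynomial Taylor expansion around $\lambda_i$, combined with the binomial theorem for the commuting pair $\lambda_i I$ and $N_{ij}$, yields
\[
p(J_{ij}) \;=\; \sum_{l=0}^{k_{ij}-1} \frac{p^{(l)}(\lambda_i)}{l!}\, N_{ij}^{\,l},
\]
where the sum truncates by nilpotency. Thus $p(A)$ depends only on the Hermite data $\{p^{(l)}(\lambda_i) : i \in \intset{1}{d},\; l \in \intset{0}{m_i - 1}\}$. Matrix functions of more general $f$ are defined (or equivalently characterized) by extending this identity: $f(A) := Q^{-1} f(J) Q$ with $f(J_{ij}) := \sum_{l=0}^{k_{ij}-1} \frac{f^{(l)}(\lambda_i)}{l!}\, N_{ij}^{\,l}$, which is well-defined whenever $f$ is $(m_i-1)$-times differentiable at each $\lambda_i$.

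From this characterization, $h(A) = f(A)$ holds if and only if $h$ and $f$ share the Hermite data, i.e. $h^{(l)}(\lambda_i) = f^{(l)}(\lambda_i)$ for $i \in \intset{1}{d}$ and $l \in \intset{0}{m_i-1}$. Restricting to polynomials of degree less than $n = \sum_{i=1}^d m_i$ produces a square linear system of $n$ equations in the $n$ unknown coefficients $v_0, v_1, \ldots, v_{n-1}$, which is exactly the system~\eqref{equJ: matrix poly coef}.

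The main obstacle is proving that this Hermite interpolation system is nonsingular. The coefficient matrix is a confluent Vandermonde matrix, whose determinant (up to products of factorials arising from the derivative conditions) equals $\prod_{i<j}(\lambda_i - \lambda_j)^{m_i m_j}$ and is therefore nonzero since the $\lambda_i$ are distinct by construction. Existence and uniqueness of the interpolant $h$ then follow, and the block-diagonal computation above yields $h(J_{ij}) = f(J_{ij})$ for every Jordan block, hence $h(A) = Q^{-1} h(J) Q = Q^{-1} f(J) Q = f(A)$, as required.
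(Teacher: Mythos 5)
Your proposal is correct, but it is worth noting that the paper does not actually prove this statement: it is imported verbatim as \citep[Theorem~3.5]{chen1984linear}, so there is no in-paper argument to match against. Your Jordan-form route is a legitimate self-contained proof and differs from the classical textbook argument behind the citation. The standard proof handles polynomial $f$ by Euclidean division, $f = q\Delta + h$ with $\deg h < n$: since $\Delta$ vanishes to order $m_i$ at $\lambda_i$, the remainder $h$ automatically satisfies the Hermite conditions \eqref{equJ: matrix poly coef}, and Cayley--Hamilton gives $f(A) = q(A)\Delta(A) + h(A) = h(A)$; for non-polynomial $f$ the identity is essentially the definition of $f(A)$. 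Your approach instead reduces everything to single Jordan blocks via $p(J_{ij}) = \sum_{l=0}^{k_{ij}-1} \frac{p^{(l)}(\lambda_i)}{l!} N_{ij}^{\,l}$ and then solves the Hermite system directly, invoking nonsingularity of the confluent Vandermonde matrix. What your route buys is an explicit existence-and-uniqueness statement for the degree-$(n-1)$ interpolant (the paper's later proof of Theorem~\ref{theorem: main_theorem} in fact re-derives a closely related confluent-Vandermonde nonsingularity fact for the matrix $\Lambda$, so your lemma is the more natural companion to how the theorem is used); what the division-plus-Cayley--Hamilton route buys is brevity and no need for the Jordan form. Two small points to tighten: your claim that $h(A) = f(A)$ \emph{if and only if} the Hermite data agree up to order $m_i-1$ is an overstatement when an eigenvalue's largest Jordan block is smaller than $m_i$ (only derivatives up to the block size minus one matter); only the ``if'' direction is needed, so this is harmless. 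And you correctly flag, where the theorem statement does not, that $f$ must be $(m_i-1)$-times differentiable at each $\lambda_i$ for $f(A)$ to be defined --- satisfied in the paper's application since $f(\lambda) = \lambda^{P_i}$.
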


Now we start the proof for \ref{theorem: main_theorem}.
\begin{proof}[Proof for Theorem \ref{theorem: main_theorem} ]
If \( n_x > N \), the statement holds trivially; thus, we assume \( n_x \leq N \).
 We choose \(\epsilon > 0\) sufficiently small that Lemma \ref{lemma: perturbation_all_stability} is valid. Thus, the perturbated Problem~\hyperref[equ:perturbed_zero_order_prototype]{8} has controllable dynamics, is normal, and satisfies Slater's condition. 

Consider the matrix
\begin{equation}
S(q) := \left(\begin{array}{llll}
 \tilde{A}(q)^{P_1}B & \tilde{A}(q)^{P_2} B & \cdots & \tilde{A}(q)^{P_{n_x}} B
\end{array}\right),
\end{equation}
for specific integers \(1 \leq P_1 < P_2 < \cdots < P_{n_x} < N+1\), where \(N\) is the total number of grid points in Problem~\hyperref[equ:perturbed_zero_order_prototype]{8}, $\tilde A(q)$ is the perturbation of $A$ with respect to $q$ and \(n_x\) is the state dimension.

According to Lemma \ref{lemma: activate Srank}, if \(S(q)\) is of full rank, then the Problem~\hyperref[equ:perturbed_zero_order_prototype]{8} generated by \(\{\tilde{A}(q), B\}\) satisfies \ac{LCvx} at least at one point among the set \(\left\{N-P_1, N-P_2, \cdots, N-P_{n_x}\right\}\).

We aim to demonstrate that \(S(q)\) is of full rank with probability one, regardless of the choice of \(P_1\) to \(P_{n_x}\).

According to Theorem \ref{theorem: matrix poly}, all powers of the matrix \(\tilde{A}(q)\) can be written as polynomials of \(\tilde A(q)\) up to the power \(n_x-1\). Then, there exists a matrix \(V(q) \in \MR^{n_x \times n_x}\) such that
\begin{equation}
\tilde{A}^{P_i}(q)=\sum_{j=1}^{n_x} V_{i j}(q) \tilde{A}^{j-1}(q), \quad i \in [1,n_x]_\mathbb{N}
\end{equation}
Here, \(V_{ij}\) represents the element in the \(i\)-th row and \(j\)-th column of \(V(q)\).

Now we show that $V(q)$ being invertible is sufficient for $S(q)$ to be of full rank: Let $W(q)$ be the inverse of $V(q),$ Then 
\begin{equation}
\label{equ: W to controllable}
\tilde{A}^i(q)=\sum_{j=1}^{n x} W_{i j}(q) \tilde{A}^{P_i}(q),\quad i \in [1,n_x]_\mathbb{N}
\end{equation}
If $S(q)$ has a nonzero left null vector $\xi$, we have
$
\xi \tilde{A}^{P_i}(q) B=0 \text { for all }
$ for all $i\in \intset 1 {n_x}.$
Equation \eqref{equ: W to controllable} implies that
$
\xi \tilde{A}^{i}(q) B=0,
$ for all $i\in \intset 0 {n_x-1}$
contradicting controllability ensured by Lemma \ref{lemma: perturbation_all_stability}.

Therefore, $V(q)$ being invertible is sufficient for $S(q)$ to be of full rank. We thus have 
$$
\operatorname{Prob}( \operatorname{rank}(S(q))<n_x) \leq   \operatorname{Prob}(V(q) \text{ being singular})
$$
The rest of this proof is to show that the probability on the right-hand side is zero. 

Setting the distinct eigenvalues of $A$ as $\lambda_1, \lambda_2, \cdots, \lambda_d$, the eigenvalues of the perturbed matrix $\tilde A(q)$ have eigenvalues \(\tilde{\lambda}_i = \lambda_i + q_i\) for $i = \{1, 2, \cdots, d\}$. Here, $d$ is the number of distinct eigenvalues of $A$.

We start by exploring how \(V(q)\) is determined by the eigenvalues \(\tilde{\lambda}_i = \lambda_i + q_i\). The characteristic polynomial of \(A\) can be written as:
$
\prod_{i=1}^d (\lambda - \lambda_i)^{m_i}
$ where $\sum_{i=1}^d m_i=n_x$ with \(m_i\) being the algebraic multiplicity of \(\lambda_i\).

Due to the perturbation preserving the Jordan form, the characteristic polynomial of \(\tilde{A}\) becomes:
$$
\prod_{i=1}^d (\lambda - \tilde{\lambda}_i)^{m_i} = \prod_{i=1}^d (\lambda - \lambda_i - q_i)^{m_i},
$$
where $\sum_{i=1}^d m_i = n_x.$
It is clear that the probability of 
\[
\tilde{\lambda}_i = \lambda_i + q_i = \lambda_j + q_j = \tilde{\lambda}_j
\]
holding for any \( i \) and \( j \) is zero.
Consequently, we retain \(d\) distinct eigenvalues with probability one.

According to Theorem \ref{theorem: matrix poly}, \(V(q)\) consists of the coefficients in \eqref{equ: matrix poly} for each \(P_i\). For the \(P_i\) power of $\tilde A(q)$ and an eigenvalue \(\tilde \lambda_j\), the equation \eqref{equJ: matrix poly coef} for the \(i\)-th row of \(V(q)\) is:  
\begin{equation}
\begin{aligned}
&\quad \quad \tilde{\lambda}_j^{P_i}= V_{i1} + V_{i2} \tilde{\lambda}_j + \cdots + V_{in_x} \tilde{\lambda}_j^{n_x-1}, \\
&P_i \tilde{\lambda}_j^{P_i-1} = 0 + V_{i2} + \cdots + V_{in_x} (n_x-1) \tilde{\lambda}_j^{n_x-2}, \\
&\quad \quad \quad \quad \vdots \\
&\prod_{k=0}^{m_j-2}(P_i-k)\tilde{\lambda}_j^{P_i-m_j+1}=\\
&\hspace{2cm}0 + \cdots + V_{in_x} \prod_{k=1}^{m_j-1}(n_x-k) \tilde{\lambda}_j^{n_x-m_j}.
\end{aligned}
\end{equation}

Combining all equations for different $P_i$ and $\tilde \lambda_j$, there are in total \(n_x^2\) unknown variables in $V(q)$ and \(n_x^2\) equations. We now write all equations into a matrix equality.

For the \(P_i\)-th power and the \(j\)-th eigenvalue, to simplify the notation, we define $\Lambda_{ij}$ as
\begin{equation}
\left[\tilde{\lambda}_j^{P_i},\, P_i \tilde{\lambda}_j^{P_i-1},\, \cdots, \, \prod_{k=0}^{m_j-2}(P_i-k) \tilde{\lambda}_j^{P_i-m_j+1}\right],
\end{equation}
 $T_j$ as 
\begin{equation}
 \left[\begin{array}{cccc}
1 & 0 & \cdots & 0 \\
\tilde{\lambda}_j & 1 & \cdots & 0 \\
\tilde{\lambda}_j^2 & 2 \tilde{\lambda}_j & \cdots & 0 \\
\vdots & \vdots & \ddots & \vdots \\
\tilde{\lambda}_j^{n_x-1} & (n_x-1) \tilde{\lambda}_j^{n_x-2} & \cdots & \prod_{k=1}^{m_j-1}(n_x-k) \tilde{\lambda}_j^{n_x-m_j}
\end{array}\right],
\end{equation}
and
\begin{equation}
V_i = \left[V_{i1}, V_{i2}, \cdots, V_{in_x}\right].   
\end{equation}
Thus, we have
$
\Lambda_{ij} = V_i T_j.
$
Stacking \(\Lambda\) and \(T\) into matrices, we have
\begin{equation}
\Lambda = \left[\begin{array}{cccc}
\Lambda_{11} & \Lambda_{12} & \cdots & \Lambda_{1d} \\
\vdots & \vdots & \ddots & \vdots \\
\Lambda_{n_x1} & \Lambda_{n_x2} & \cdots & \Lambda_{n_xd}
\end{array}\right]
\end{equation}
and 
\begin{equation}
T = \left[T_1, T_2, \cdots, T_d\right].
\end{equation}
According to Theorem \ref{theorem: matrix poly}, the matrix \(V(q)\) must satisfy
$$
\Lambda = VT,
$$
where \(\Lambda\), \(V\), and \(T\) are \(n_x\)-by-\(n_x\) matrices and functions of \(q\). The determinant of \(\Lambda\) is the product of the determinants of \(V\) and \(T\). Hence, for \(V\) to be singular, \(\Lambda\) must also be singular.

We establish the non-singularity of \(\Lambda\) through induction. To begin, we construct a sequence of matrices \(C_k \in \MR^{M_k \times M_k}\), for \(k = 1, 2, \cdots, d\) and \(M_k = \sum_{i = 1}^k m_i\). Each \(C_k\) is designed to be a square matrix formed from the top-left corner of the matrix \(\Lambda\). Specifically \(C_k\) is defined as:
\begin{equation}
C_k = \left[\begin{array}{ccc}
\Lambda_{11} & \cdots & \Lambda_{1k} \\
& \vdots & \\
\Lambda_{M_k1} & \cdots & \Lambda_{M_kk}
\end{array}\right]_{M_k \times M_k}.
\end{equation}
As \(n_x = \sum_{i = 1}^d m_i\), where \(d\) is the number of distinct eigenvalues that $A$ has, it follows that \(C_d = \Lambda\).

We initiate induction by proving that the probability of \(C_1\) being singular is zero. Substituting the expression of \(\Lambda_{ij}\) into the definition of \(C_1\), we obtain:

\begin{equation}
\begin{aligned}
&C_1 =\left[\Lambda_{11}^{\top}, \Lambda_{21}^{\top}, \cdots, \Lambda_{M_1 1}^{\top}\right]^{\top} =\\
& \left[\begin{aligned}
&\tilde{\lambda}_1^{P_1} & P_1 \tilde{\lambda}_1^{P_1-1} &\cdots &\prod_{r=0}^{m_1-2}(P_1-r) \tilde{\lambda}_1^{P_1-m_1+1} \\
& & &\vdots \\
&\tilde{\lambda}_1^{P_{m_1}} &P_{m_1} \tilde{\lambda}_1^{P_{m_1}-1} &\cdots &\prod_{r=0}^{m_1-2}(P_{m_1}-r) \tilde{\lambda}_1^{P_{m_1}-m_1+1}
\end{aligned}\right].
\end{aligned}
\end{equation}

The determinant of \(C_1\) is computed as a sum of signed products of matrix entries. Each summand in this sum represents a product of distinct entries, with no two entries sharing the same row or column. Consequently, all summands contribute to the same power order of \(\tilde{\lambda}_1\). Therefore, the determinant of \(C_1\) can be expressed as:
$$
\operatorname{det}(C_1) = c_1 \tilde{\lambda}_1^{\sum_{r=1}^{m_1} P_r - \sum_{r=1}^{m_1}(r-1)},
$$
with constant \(c_1\).

The expression for \(c_1\) can be found by setting \(\tilde{\lambda}_1 = 1\), as this expression of \(C_1\) holds for all realizations of \(\tilde{\lambda}_1\). 
We denote \(E_1\) as the matrix that generates \(c_1\), that is, \(c_1 = \operatorname{det}(E_1)\). Each element in \(E_1\) is a polynomial in \(P_i\), where \(i \in \{1, 2, \cdots, {m_1}\}\). Notably, the polynomials in the same column share the same coefficients. This structure allows \(E_1\) to be transformed into a Vandermonde matrix through column transformations. Given that \(P_i\) are distinct, this Vandermonde matrix is nonsingular, implying \(c_1 \neq 0\). Thus \(C_1\) is nonsingular as long as \(\tilde{\lambda}_1 \neq 0\). Given that \(\tilde{\lambda}_1\) follows a uniform distribution, the probability of \(\tilde{\lambda}_1 = 0\) is zero. Therefore, \(C_1\) is nonsingular with probability one.

Now, assuming that \(C_k\) is nonsingular with probability one, we show that \(C_{k+1}\) is also nonsingular with probability one.
We start by showing that with fixed values of \(\tilde{\lambda}_1, \tilde{\lambda}_2, \cdots, \tilde{\lambda}_k\) and the condition that \(C_k\) being nonsingular, the conditional probability of \(C_{k+1}\) being nonsingular is one. Partition \(C_{k+1}\) into the following blocks:
\begin{equation}
\left[\begin{array}{ll}
C_k & D_{k+1} \\
R_{k+1} & S_{k+1}
\end{array}\right].
\end{equation}
Here, \(R_{k+1} \in \MR^{m_{k+1} \times M_k}\) does not contain \(\tilde{\lambda}_{k+1}\), and \(D_{k+1} \in \MR^{m_{k+1} \times M_k}\) contains terms involving \(\tilde{\lambda}_{k+1}\) up to order \(P_{M_k}\). The terms of \(\tilde{\lambda}_{k+1}\) with power higher than \(P_{M_k}\) are all included in \(S_{k+1} \in \MR^{m_{k+1} \times m_{k+1}}\).

Given the fixed values of \(\tilde{\lambda}_1, \tilde{\lambda}_2, \cdots, \tilde{\lambda}_k\), the determinant \(\operatorname{det}(C_{k+1})\) becomes a polynomial in \(\tilde{\lambda}_{k+1}\) with fixed coefficients. The term with the highest power in this polynomial arises from the product of elements in \(C_k\) with elements in \(S_{k+1}\). The absolute value of the highest power term is in fact 
$
|\operatorname{det}(C_k) \operatorname{det}(S_{k+1})|.
$

 Note that the matrix \(S_{k+1}\) shares the same pattern as \(C_1\). Following a similar line of reasoning as in the case of \(E_1\), we can conclude that the coefficient of \(\operatorname{det}(S_{k+1})\) is also nonzero. Since \(C_k\) is nonsingular, the determinant \(\operatorname{det}(C_{k+1})\) has a nonzero leading term, implying that it has a finite number of roots. Meanwhile, the distribution of \(\tilde{\lambda}_{k+1}\) is a uniform distribution independent of previous eigenvalues. Therefore, the conditional probability of \(\tilde{\lambda}_{k+1}\) coinciding with these roots i.e.
\begin{equation}
\operatorname{Prob}(C_{k+1} \text{ singular}|C_{k} \text{ nonsingular},(\tilde{\lambda}_1, \cdots, \tilde{\lambda}_k) \text{ fixed} ) 
\end{equation}
is zero. Note that \(\operatorname{Prob}(\tilde{\lambda}_1, \tilde{\lambda}_2, \cdots, \tilde{\lambda}_k |C_{k} \text{ nonsingular} )\) is a uniform distribution over the domain of $\tilde{\lambda}_1, \tilde{\lambda}_2, \cdots, \tilde{\lambda}_k,$ which is the set \(\epsilon \mathbb{D}\) projected to the first $k$ dimensional minus a measure zero set representing perturbations that make \(C_k\) singular. By integrating into \(\tilde{\lambda}_1, \tilde{\lambda}_2, \cdots, \tilde{\lambda}_k\), we deduce that:
$$
\operatorname{Prob}(C_{k+1} \text{ singular}|C_{k} \text{ nonsingular}) = 0.
$$
Thus $\operatorname{Prob}(C_{k+1} \text{ singular}) =0,$ as $c_k$ is nonsigular with probability one.

Using induction, \(\Lambda = C_d\) is nonsingular with probability one, which implies that \(V\) is also nonsingular. Consequently, \(S(q)\) has full rank with probability one. Due to the discussion at the start of this proof, the probability of \ac{LCvx} being invalid at all grid points \((N-P_1, N-P_2, \ldots, N-P_{n_x})\) is zero. This means that, for any selection of \(n_x\) grid points, the probability of simultaneously violating the original nonconvex constraints at these points is zero. Hence, the probability of at least \(n_x\) violations of the original nonconvex constraints is also zero.\qed \end{proof}
\subsection{Proof for Theorem \ref{thm: perturbation error control}}
\label{sec: Proof for perturbation of error control}
\begin{proof}
Since \(\tilde{A}^{i}(\cdot)\) is a smooth function of the perturbation and the matrix norm is a Lipschitz function, there exists an \(\epsilon_1\) such that \(\|\tilde A^{i}(\cdot)\|\) is a Lipschitz function for perturbations \(\|q\| \leq\epsilon_1\) and for \(i = 0, 1, 2, \ldots, N-1\). Let \(L_A\) denote the maximum Lipschitz constant among all \(i\) of \(\|\tilde A^{i}(\cdot)\|\). Additionally, Assumption \ref{assumption: uicompactness} ensures that \(u_i(q)\) is bounded, i.e., \(\max \|u_i(q)\| < D\) for some \(D > 0\). Therefore, when \(\|q\| \leq \epsilon_1\), we can compare \(\tilde{x}_{N+1}(q)\) with \(x_{N+1}(q)\), and obtain:\begin{align*}
&\|\tilde{x}_{N+1}(q) - x_{N+1}(q)\|\\
& = \| \sum_{i = 1}^N (\tilde{A}^{i-1}(0) - \tilde{A}^{i-1}(q)) B u_i(q)\| \\
&\leq \sum_{i = 1}^N L_A \|q\| \|\|B\| \|u_i(q)\| \leq NDL_A\|B\| \|q\|.
\end{align*}
Hence, \(\tilde{x}_{N+1}(q)\) locates in a local neighborhood of \(x_{N+1}(q)\) with radius \(NDL_A \|B\|\epsilon_1\). Meanwhile, since $G$ is an affine function and \(G(x_{N+1}(0)) = 0\), we have \(\|G(\tilde{x}_{N+1}(q))\| < \delta\), for suffciently small $\|q\|$.

Now we move on to the second result. Since the control is bounded, \(x(q)\) is bounded. Thus, \(m(\cdot)\) in Problem~\hyperref[equ:perturbed_zero_order_prototype]{8} is a Lipschitz function within the domain of \(x\) because $m$ is $C^1$ smooth. with Lipschitz constant \(L_m\). Thus:
$$
|m(\tilde{x}_{N+1}(q)) - m(x_{N+1}(q))|\leq NDL_AL_m\|B\| \|q\|.
$$ 

Meanwhile, Lemma \ref{lemma: cQ}, Lemma \ref{lemma: perturbation_normality_stability} (for the compactness), and Theorem \ref{thm: vt continuity} indicate that the optimum value of Problem~\hyperref[equ:perturbed_zero_order_prototype]{8} is continuous with respect to \(q\).  Therefore, for any $\delta>0$ there exists $\epsilon>0$ such that for all $\|q\|<\epsilon,$
\begin{align*}
&m(\tilde{x}_{N+1}(q)) + \sum_{i = 1}^N l_i(g(u_i(q)))\\ &\leq m(x_{N+1}(q)) + \sum_{i = 1}^N l_i(g(u_i(q))) + K_q\|q\| \\
&\leq m(x_{N+1}(0)) + \sum_{i = 1}^N l_i(g(u_i(0))) +K_q\|q\| + \delta/2 \\
&\leq m^*_o + K_q\|q\| + \delta/2,
\end{align*}
where $K_q = NDL_AL_m\|B\| .$
Here, the second inequality arises from Theorem \ref{thm: vt continuity} that the optimum value of Problem~\hyperref[equ:perturbed_zero_order_prototype]{8} is continuous with respect to \(q\) , and that \((u(0), x_{N+1}(0))\) is the solution to the unperturbed problem. The last inequality is derived from the fact that the optimum value of Problem~\hyperref[equ:zero_order_prototype]{4} lower bounds that of Problem~\hyperref[equ:original_LCvx]{3}. Hence, for sufficiently small \(\|q\|\), we have \(m(\tilde{x}_{N+1}(q)) + \sum_{i = 1}^N l_i(g(u_i(q))) \leq m^*_o + \delta.\) 
\qed \end{proof}

\subsection{Proof for Theorem \ref{thm:violation_fixed}}
\label{sec: proof for violation_fixed}
\begin{proof}
 Expanding the dynamics recursion yields
\[
\|\hat{x}_{N+1}-x_{N+1}\|\leq\sum_{i=1}^{N}\|A^{N-i}B(\hat{u}_i-u_i)\|.
\]

Using the relationship between $A$ and $A_c$ with the bound \(\|exp(A_ct)\|\le exp({\|A_c\|t})\), we have
\begin{align}
&\|A^{N-i}B(\hat{u}_i-u_i)\|\\
&\leq e^{\|A_c\| t_f}\|B_c\|\|\hat{u}_i-u_i\|\int_0^{t_f / N}e^{\|A_c\|\tau}d\tau \notag \\
&\leq e^{\|A_c\|t_f}\frac{\|B_c\|}{\|A_c\|}\rho_{\min}(e^{\|A_c\|t_f/N}-1).
\end{align}

Summing over at most \(n_x-1\) violations gives the first bound.

For asymptotically stable \(A_c\), using the additional bound \(\|exp({A_ct})\|\le\beta(A_c) exp({-\alpha t})\) for some $\alpha$ and $\beta$ depending on $A_c$\cite[Fact 15.16.9]{bernstein2018scalar}, we obtain
\[
\|A^{N-i}B(\hat{u}_i-u_i)\|\le e^{\|A_c\|t_f}\|B_c\|\rho_{\min}\beta(A_c)\frac{t_f}{N},
\]
leading directly to the second result.   
\end{proof}

\subsection{Proof for Theorem \ref{thm: over_relaxed main}}
\label{sec: proof for over_relax}
\begin{proof}

Denote $(x(q), u(q), \sigma(q))$ the solution toProblem~\hyperref[equ:LCvxallmissed]{9}  generated by $t_s^*$ and perturbed by $q$. For any $\epsilon > 0$, we set $\eta = \min\{\epsilon  L_l(t_f - t_s^*)/N, \frac{3\epsilon}{4}\}$. According to Theorem \ref{thm: bisection search method}, there exists a $t_s^*$ such that 
\begin{equation}
\begin{aligned}
v\left(t_s^*\right)= & m\left(x_{N+1}(0)\right)+\frac{t_f-t_s^*}{N} \sum_{i=1}^N l\left(\sigma_i(0)\right) \\
& +t_s^* l\left(\rho_{\min }\right) \leq m^*+t_f l\left(\rho_{\min }\right)+\frac{\eta}{2}
\end{aligned}
\end{equation}
where $m^*$ is the optimum of Problem~\hyperref[equ: boundaryonly]{7} and $x_{N+1}(0)$ is the result of the unperturbedProblem~\hyperref[equ:LCvxallmissed]{9}  formed by $t_s^*$.

The first result follows from Theorem \ref{thm: perturbation error control}. The second result is from Theorem \ref{theorem: main_theorem} as long as we choose $\delta$ sufficiently small such that $\|q\|\leq \delta$ is included in the cube area defined by Theorem \ref{theorem: main_theorem}. We now proceed to prove the third and fourth results.

According to Lemma \ref{lemma: perturbation_normality_stability}, 
\begin{equation}
m\left(x_{N+1}(q)\right) + \frac{t_f-t_s^*}{N} \sum_{i=1}^N l\left(\sigma_i(q)\right) + t_s^*  l\left(\rho_{\text{min}}\right)
\end{equation}
is a continuous function of $q$ in a neighborhood of zero. Therefore, there exists a $\delta > 0$ such that for all $\|q\| \leq \delta$,
\begin{equation}
\begin{aligned}
&m\left(x_{N+1}(q)\right)+\frac{t_f-t_s^*}{N} \sum_{i=1}^N l\left(\sigma_i(q)\right)\\
&\leq v\left(t_s^*\right)-t_s^* l\left(\rho_{\min }\right)+\frac{\eta}{2} \\
& \leq m^*+\left(t_f-t_s^*\right) l\left(\rho_{\min }\right)+\eta .
\end{aligned}
\end{equation}

Since $x_{N+1}(q)$ is feasible for the perturbated problem, we have $G(x_{N+1}(q)) = 0$ and $x_{N+1}(q)$ is feasible for Problem~\hyperref[equ: boundaryonly]{7}. Therefore, $m( x_{N+1}(q)) \geq m^*$ and 
\begin{equation}
\frac{\sum_{i=1}^N l\left( \sigma_{i}(q)\right)}{N} \leq \frac{\eta}{t_f-t_s^*} + l\left(\rho_{\text{min}}\right).
\end{equation} As $\sigma_i(q) \geq \rho_{\text{min}}$ for all $i$ and $l$ has positive derivative, we have
\begin{equation}
l\left(\rho_{\text{min}}\right) \leq l\left(\sigma_i(q)\right) \leq l\left(\rho_{\text{min}}\right) + \frac{N \eta}{t_f - t_s^*}.
\end{equation}
Given that $\eta \leq \epsilon  L_l(t_f - t_s^*)/N$ and the derivative of $l$ is larger than $L_l$, we have
\begin{equation}
\left|\sigma(q) - \rho_{\text{min}}\right| \leq \frac{N\eta}{L_l(t_f - t_s^*)} \leq \epsilon,
\end{equation}
yielding the third result.

For the final result, since $l\left(\rho_{\text{min}}\right) \leq l\left(\sigma_i(q)\right)$, we have

$m\left(x_{N+1}(q)\right) \leq m^* + \eta.$
Furthermore, Theorem \ref{thm: perturbation error control} indicates that
\[
|m(\tilde x_{N+1}(q)) - m(x_{N+1}(q))| \leq N D L_A L_M \|B\|\|q\|,
\]
where $D$, $L_A$, and $L_M$ are defined in Theorem \ref{thm: perturbation error control} and are constants when $t_s^*$ is fixed. Thus, we obtain
\begin{equation}
m\left(\tilde{x}_{N+1}(q)\right) \leq m^* + \eta + N D L_A L_M \|B\| \|q\|.
\end{equation}
With a sufficiently small $\|q\|$, we have
\(
m\left(\tilde{x}_{N+1}(q)\right) \leq m^* + \epsilon.
\)
\qed \end{proof}
\bibliographystyle{plainnat} 
\bibliography{refs} 
\end{document}